\newtheorem{teo}{Theorem}[section]
\newtheorem{prop}[teo]{Proposition}
\newtheorem{lemma}[teo]{Lemma}
\newtheorem{remark}[teo]{Remark}
\newcommand\numberthis{\addtocounter{equation}{1}\tag{\theequation}}
\newtheorem{conjecture}{Conjecture}
\newcounter{hypoconbisfl}
\newcounter{saveconbisfl}
\newcommand\debutTCL{\begin{list} {\textbf{CLT\arabic{hypoconbisfl}}}{\usecounter{hypoconbisfl}}\setcounter{hypoconbisfl}{\value{saveconbisfl}}}
	\newcommand\finTCL{\end{list}\setcounter{saveconbisfl}{\value{hypoconbisfl}}}
\newcounter{hypoconbisf}
\newcounter{saveconbisf}
\newcommand\debutI{\begin{list} {\textbf{I\arabic{hypoconbisf}}}{\usecounter{hypoconbisf}}\setcounter{hypoconbisf}{\value{saveconbisf}}}
\newcommand\finI{\end{list}\setcounter{saveconbisf}{\value{hypoconbisf}}}
\newcounter{hypoconbisx}
\newcounter{saveconbisx}
\newcommand\debutTX{\begin{list} {\textbf{TX\arabic{hypoconbisx}}}{\usecounter{hypoconbisx}}\setcounter{hypoconbisx}{\value{saveconbisx}}}
	\newcommand\finTX{\end{list}\setcounter{saveconbisx}{\value{hypoconbisx}}}
\newcommand{\mathleft}{\@fleqntrue\@mathmargin0pt}
\newcommand{\mathcenter}{\@fleqnfalse}
\title{}
\DeclarePairedDelimiter\floor{\lfloor}{\rfloor}
\author{
	Roberta Flenghi$^1$\\
	\texttt{roberta.flenghi@enpc.fr}
	\and
	Benjamin Jourdain$^1$\\
	\texttt{benjamin.jourdain@enpc.fr}
}
\date{%
	$^1$Cermics, \'Ecole des Ponts, INRIA, Marne-la-Vall\'ee, France.\\}
\newcommand\blfootnote[1]{%
	\begingroup
	\renewcommand\thefootnote{}\footnote{#1}%
	\addtocounter{footnote}{-1}%
	\endgroup
}
\title{Central limit theorem for the stratified resampling mechanism}
\begin{document}
	\maketitle
		\blfootnote{``This work is
		supported by the french National Research Agency under the grant
		ANR-21-CE40-0006 (SINEQ).''}
              \begin{abstract}
                The stratified resampling mechanism is one of the resampling schemes commonly used in the resampling steps of particle filters. In the present paper, we prove a central limit theorem for this mechanism under the assumption that the initial positions are independent and identically distributed and the weights proportional to a positive function of the positions such that the image of their common distribution by this function has a non zero component absolutely continuous with respect to the Lebesgue measure. This result relies on the convergence in distribution of the fractional part of partial sums of the normalized weights to some random variable uniformly distributed on $[0,1]$, which is established in the companion paper \cite{CLTfract} by overcoming the difficulty raised by the coupling through the normalization. Under the conjecture that a similar convergence in distribution remains valid at the next steps of a particle filter which alternates selections according to the stratified resampling mechanism and mutations according to Markov kernels, we provide an inductive formula for the asymptotic variance of the resampled population after $n$ steps. We perform numerical experiments which support the validity of this formula.
\end{abstract}
\section{Introduction}

Particle filtering, also known as Sequential Monte Carlo methods (see Chapter $4$ in \cite{cappemoulines} and \cite{chopinandpapas} for a general introduction), is a powerful method to estimate the evolving state of a system over time, even when the state cannot be directly observed but can only be inferred through noisy measurements or observations. It has become a very popular class of numerical methods
for the solution of optimal estimation problems in non-linear non-Gaussian scenarios. This kind of
method is used in real-time applications appearing in fields such as chemical engineering,
computer vision, financial econometrics, target tracking, robotics and statistics (see among others \cite{ristic} and \cite{Doucet}). 
The use of Monte Carlo methods for nonlinear filtering problems can be traced back to Handschin \cite{handschin} and Mayne \cite{mayne}. They introduced a sequential version of the importance sampling method (see \cite{evans}, \cite{casella} as references for the importance sampling) and the corresponding algorithm is known as sequential importance sampling, often abbreviated SIS. A drawback of the latter was identified by Gordon et al. in \cite{GordonSalmond}: as the number of iterations increases, the importance weights tend to degenerate (a phenomenon usually known as \emph{weight degeneracy}). This means that after a certain number of iterations, some weights tend to become very small so that the corresponding positions no longer contribute to the estimation.
Thus, Gordon et al. introduced  the resampling step where, in view of stabilizing the Monte Carlo error over time, the key idea is to eliminate the particles having low weights and to replicate the particles having high weights. Therefore, by propagating $M$ particles through weighting, resampling  and mutation steps (each particle evolves randomly according to a given transition probability kernel), particle filters can be used to numerically estimate the state of the system given the observations
(see  also \cite{Doucet}, \cite{chopinandpapas}).\\

 Let us henceforth concentrate on the resampling step. Let $M\geq 1$. Given a sequence of $\mathbb{R}^d$-valued random vectors  $(X_{m})_{1\leq m\leq M}$ with associated  random weights $(w_m^M)_{1\leq m\leq M}$ such that $w_m^M>0$ and $\sum\limits_{m = 1}^Mw_m^M=1$, a resampling scheme defines the resampled sequence $(Y_{m})_{1\leq m\leq M}$ such that

\begin{align}\label{constra}
\mathbb{E}\left(\frac{1}{M}\sum_{m=1}^{M}\delta_{Y_{m}} \mathrel{\Big|}\mathcal{G} \right) 
=\sum_{m=1}^{M}w_m^M\delta_{X_{m}}.
\end{align} 
where $\mathcal{G}:=\sigma\left((X_{m},w^M_m)_{1\leq m\leq M}\right) $.
 Resampling schemes \cite{Douc} permit to replace the probability measure $\sum_{m=1}^{M}w_m^M\delta_{X_{m}}$ with non equal weights by some empirical measure $\frac{1}{M}\sum\limits_{m=1}^{M}\delta_{Y_{m}}$ with the same conditional expectation given $ \mathcal{G}$. Depending on the definition used for the random variables $Y_m$,  several resampling schemes can be considered. The most common resampling techniques are of the following types: multinomial, residual , stratified
 \cite{kitagawa} and systematic  \cite{Carpenter}. See \cite{Douc} for a brief description of such methods. 
 The simplest approach is the multinomial resampling. It consists in drawing, conditionally upon $ \mathcal{G}$, the new positions $\left(Y_{m}\right)_{1\leq m \leq M} $ independently from the common distribution $\sum_{m=1}^Mw^M_m\delta_{X_m}$. While the residual resampling consists in replicating  $\lfloor Mw^M_m\rfloor$-times $X_m$ for $m\in\{1,\cdots,M\}$ and the remaining $M-\sum_{m=1}^M \lfloor Mw^M_m\rfloor$ variables $Y_m$ are drawn, conditionally upon $ \mathcal{G}$, independently from the common distribution $\frac{1}{M-\sum_{m=1}^M \lfloor M w^M_m\rfloor}\sum_{\ell=1}^{M}\left\lbrace Mw^M_\ell \right\rbrace\delta_{X_{\ell}}$.
 Concerning the stratified resampling, it is not straightforward to understand its behaviour, notably due to the complicated structure in the definition of the $Y_m$ which, however, continue to be conditionally independent given $ \mathcal{G}$. The systematic resampling is even more complicated to understand since  the $Y_m$ are no longer conditionally independent given $ \mathcal{G}$.\\

Resampling schemes have been largely studied in the literature, we now present a selection of such results. The asymptotic behaviour for the multinomial resampling scheme has been extensively studied in \cite{delmoral} (see Corollary $7.4.2$ and Section $9.4.2$). Douc et al. in \cite{Douc} showed that residual and stratified resampling improve over multinomial resampling in the sense that they have a lower conditional variance (with respect to the $\sigma$-algebra generated by $(X_{m})_{1\leq m\leq M}$). They also proved, by means of a counter-example, that the same property does not hold for systematic resampling. Furthermore, they established a central limit theorem for the residual resampling approach suggesting that a similar result should be obtained for the stratified resampling scheme.
 One of the last contributions concerning resampling schemes, is given by Gerber et al. \cite{chopin}. Using the notion of negative association \cite{Proschan}, they first provided a general consistency result for resampling. An application of this theorem gives the proof of almost sure weak convergence of   $\frac{1}{M}\sum\limits_{m=1}^{M}\delta_{Y_{m}}$ in the stratified resampling method. Moreover they provide a counter-example to almost sure weak convergence for the systematic resampling method.
More recently, Chopin et al. \cite{chopin2} studied the resampling schemes for particle filters with weakly informative observations. Empirical evidence indicates that when the weights used in resampling exhibit high variability, the selection of the resampling strategy tends to have a weak impact. However, in cases where the weights are close to being uniform,  the performance differences between the different resampling methods can be substantial. By keeping $M$ fixed, they also considered the asymptotic behaviour of the resampling schemes as the weights become less and less informative. 
See also \cite{Liu}, \cite{Fearnhead}, \cite{Kunsch}, \cite{NChopin} for additional references.\\

In this paper, we focus on the stratified resampling scheme for the weights $w_m^M=g(X_m)\big{/}\sum\limits_{\ell=1}^{M}g(X_\ell)
$ where $g$ is a positive measurable function. Our purpose is to study the asymptotic behaviour of the method as the number of particles $M$ goes to $\infty$. Let $f$ be a real-valued measurable function. To compute the asympotic variance of $\frac{1}{\sqrt{M}}\sum\limits_{m=1}^{M}f\left( Y_{m}\right)$, it turns out to be essential to understand the behaviour as $M\rightarrow \infty$ of 
\begin{align}\label{altropaper}
	\left\lbrace \tfrac{M\sum_{\ell=1}^{\lceil\alpha M \rceil-1}g\left(X_\ell \right)}{\sum_{\ell=1}^{M}g\left(X_\ell \right) } \right\rbrace
\end{align}
where $0< \alpha < 1$ and where $\left\lbrace x \right\rbrace$ denotes the fractional part of $x \in \mathbb{R}$. Under the assumption that the $X_i$ are i.i.d. such that the law of $g\left(X_i\right)$ has an absolutely continuous component, we prove in the companion paper \cite{CLTfract} the convergence in distribution of (\ref{altropaper}) to a random variable uniformly distributed  on $\left[0,1 \right]$. Under this assumption, we explicit the limit of $\textrm{Var}\left(\frac{1}{\sqrt{M}}\sum\limits_{m=1}^{M}f\left( Y_{m}\right) \right) $ and prove that $\frac{1}{\sqrt{M}}\sum_{m=1}^{M}\left(f(Y_m^{M})-\frac{{\mathbb E}(f(X_1)g(X_1))}{{\mathbb E}(g(X_1))}\right)$ converges in distribution to a centered Gaussian random variable with variance equal to the limit. The proof of this Central Limit Theorem relies on the asymptotic behaviour as $M\rightarrow \infty$ of a vector composed of $\sqrt{M}\left( \dfrac{\sum_{m=1}^{M}g(X_{m})f(X_{m})}{\sum_{\ell=1}^{M}g(X_{\ell})} - \mathbb{E}\left(f\left(X_1 \right) g\left(X_1\right)  \right)\right)$ and the fractional parts (\ref{altropaper}) for $\alpha\in\{\alpha_1,\cdots,\alpha_s\}$ with $0<\alpha_1<\cdots<\alpha_s<1$. In the companion paper \cite{CLTfract}, we check that this vector converges in distribution to a vector with  centered Gaussian first component and independent $s$ last components uniformly distributed on $[0,1]^s$. Under the conjecture that a similar convergence in distribution remains valid at the next steps of a particle filter which alternates selections according to the stratified resampling mechanism and mutations according to Markov kernels, we provide an inductive formula for the asymptotic variance of the resampled population after $n$ steps.\\

The paper is organized as follows. 
In Section \ref{statmainresult}, we recall the definition of the stratified sampling scheme and the statement of the main result (Theorem \ref{risultato_principale}) is given. In Section \ref{secasymptoticvariance}, the asymptotic variance   is derived and in Section \ref{secclt}, the proof of the Central Limit Theorem is given. In Section \ref{sectionfinale}, the proof of some ausiliary results is provided. In Section \ref{nextsteps}, we consider a particle filter which alternates selections according to the stratified resampling mechanism and mutations according to Markov kernels and we provide  an inductive formula for the asymptotic variance of the resampled sequence.
We perform numerical experiments which support the validity of this formula.\\ 

	\textbf{Notation}
We denote by $\floor*{x}$ the integer $j$ such that $j\leq x < j+1$ and by $\{x\}=x-\floor*{x}$ the fractional part of $x\in \mathbb{R}$. We denote the set of real-valued bounded measurable functions on  $\mathbb{R}^d$ by $\mathcal{B}_{b}\left(\mathbb{R}^d\right)$. Given $\mu$ a positive measure on $\mathbb{R}^d$ endowed with the Borel sigma algebra and $\phi:\mathbb{R}^d \rightarrow \mathbb{R}$ a measurable function that is either positive or such that $\int_{\mathbb{R}^{d}}\left| \phi(x)\right|\mu(dx)<\infty$, we denote $\mu(\phi)= \int_{\mathbb{R}^{d}}\phi\left( x\right)\mu(dx) $. \\ 
	\section{Statement of the Main Result}\label{statmainresult}
	Given $M>0$, let $(X_{m})_{1\leq m\leq M}$ be a sequence 
	of i.i.d. $\mathbb{R}^d$-valued random vectors following the law $\eta$ and let $g:\mathbb{R}^d \rightarrow \left(0,\infty \right) $ be a measurable function such that $0<\inf_{x\in \mathbb{R}^d}g(x)\leq\sup_{x\in \mathbb{R}^d}g(x)<\infty $. In what follows we denote $\bar{g}:=\sup_{x\in \mathbb{R}^d}g(x)$ and $\underline{g}:=\inf_{x\in \mathbb{R}^d}g(x) $.\\
	 We now generate the sequence $(Y_m^{M})_{1\leq m \leq M}$ according to the selection step of the stratified sampling. We recall that starting from $M$ random variables $(U_m)_{1\leq m\leq M}$ i.i.d. distributed according to the uniform law on $\left( 0,1\right)$ and independent of $(X_{m})_{1\leq m\leq M}$, the sequence $(Y_m^{M})_{1\leq m \leq M}$ is defined in the following way 
	
	\begin{equation}\label{stratsampling}
	Y_m^{M}=\sum_{\ell=1}^{M}1_{\left\lbrace \sum\limits_{j=1}^{\ell-1}w_j^M<m-U_m\leq\sum\limits_{j=1}^{\ell}w_j^M \right\rbrace }X_{\ell} \,\, for\,m\in \left\lbrace 1,\cdots, M \right\rbrace 
	\end{equation}

	where $w_m^M=\frac{Mg(X_m)}{\sum\limits_{\ell=1}^{M}g(X_\ell)}$  for $m=1,\cdots,M$. Since the weights are preserved when multiplied by a positive constant, up to dividing $g$ by $\mathbb{E}(g(X_1))$, without loss of generality we may suppose that $$\mathbb{E}(g(X_1))=1.$$\\
	Let us observe that the random vectors $(Y_m^{M})_{1\leq m \leq M}$ are conditionally independent given $\mathcal{F}$, the $\sigma$-algebra generated by the sequence $(X_{m})_{1\leq m\leq M}$. Moreover one has 
	\begin{equation}\label{selection}
	\mathbb{E}\left( \frac{1}{M}\sum_{m=1}^{M}\delta_{Y_m^{M}}\arrowvert\mathcal{F}\right)=\dfrac{1}{\sum\limits_{m=1}^{M}g(X_{m})} \sum\limits_{m=1}^{M}g(X_{m})\delta_{X_{m}}.	
	\end{equation}
	Given  $f:\mathbb{R}^d\rightarrow \mathbb{R}$ a measurable function, our purpose is to provide a central limit theorem for $\dfrac{1}{M}\sum\limits_{m=1}^{M}f(Y_m^{M})$. If we start looking at the variance of $\dfrac{1}{\sqrt{M}}\sum\limits_{m=1}^{M}f(Y_m^{M})$, using (\ref{selection}) one has 
	\begin{align}
	\textrm{Var}\left(\dfrac{1}{\sqrt{M}}\sum_{m=1}^{M}f(Y_m^{M})\right) &= \textrm{Var}\left( \mathbb{E}\left(\dfrac{1}{\sqrt{M}}\sum_{m=1}^{M}f(Y_m^{M})\mathrel{\Big|}\mathcal{F} \right)\right)+\mathbb{E}\left( \textrm{Var}\left(\dfrac{1}{\sqrt{M}}\sum_{m=1}^{M}f(Y_m^{M}) \mathrel{\Big|}\mathcal{F}\right)\right)\label{cond_variance}\\
	&=\textrm{Var}\left( \sqrt{M}\dfrac{\sum_{m=1}^{M}g(X_{m})f(X_{m})}{\sum_{\ell=1}^{M}g(X_{\ell})} \right)+\mathbb{E}\left( \textrm{Var}\left(\dfrac{1}{\sqrt{M}}\sum_{m=1}^{M}f(Y_m^{M}) \mathrel{\Big|}\mathcal{F}\right)\right)\label{splitting}.
\end{align}
Thus, in particular, we will prove respectively the convergence of the first and second term in (\ref{splitting}). The first term is common to all the resampling schemes while the second one really depends on the considered resampling scheme.\\
	
Let us consider the following hypotheses:
\debutI
\item $\sup_{x\in \mathbb{R}^d}\left| f(x)\right| <\infty $
\item the law of $g(X_1)$ has an absolutely continuous component with respect to the Lebesgue measure on $\mathbb{R}$
\finI

Before providing the statement of the central limit theorem, to express the asymptotic variance we introduce the real-valued functions $\beta_0$ and $\beta_{1}$ respectively given by

\begin{align}\label{beta0}
\beta_0(x,y_1)&=\left\lbrace x+y_1 \right\rbrace (1-\left\lbrace x+y_1 \right\rbrace)+x(1-x)-2x(1-x-y_1)1_{\left\lbrace y_1 <1-x \right\rbrace}
\end{align}

\begin{align} 
\beta_1(x,y_1,y_2,y_{3}) & =2\bigg(\left\lbrace x+y_1 \right\rbrace\bigg(1-\left\lbrace x+y_1 \right\rbrace-y_2\bigg)  1_{\left\lbrace y_2 <1-\left\lbrace x+y_1 \right\rbrace \right\rbrace}\label{beta1}\\
&\phantom{=}-\left\lbrace x+y_1 \right\rbrace\bigg(1-\left\lbrace x+y_1 \right\rbrace-y_2-y_3\bigg) 1_{\left\lbrace y_2+ y_3<1-\left\lbrace x+y_1 \right\rbrace \right\rbrace} \nonumber\\ 
& \phantom{=}- x\bigg(1-x-y_1-y_2\bigg) 1_{\left\lbrace y_{1}+y_{2}<1-x\right\rbrace }+x\left(1-x-y_1-y_2-y_3 \right) 1_{\left\lbrace y_{1}+y_{2}+y_{3}<1-x\right\rbrace }\bigg)\nonumber.
\end{align}
\begin{remark}\label{continuitybeta}
		Let us observe that $\beta_0$ is continuous.
		Indeed, the fractional part is continuous apart from the integers where its left-hand limit is equal to $1$ and its right-hand limit is equal to $0$. The composition with the function $f\left(z\right)=z\left(1-z\right)$ which is such that $f\left(0 \right)= f\left(1 \right) =0$, allows us to conclude that $\left( x,y_1\right)\mapsto \left\lbrace x+y_1 \right\rbrace (1-\left\lbrace x+y_1 \right\rbrace) $ is continuous. Moreover $\left( x,y_1\right)\mapsto x(1-x-y_1)1_{\left\lbrace y_1 <1-x \right\rbrace}$ is continuous  since the function $\bar{f}(x,y_1)= 1-x-y_1$ is equal to $0$ on the set $\left\lbrace (x,y_1)\in \mathbb{R}^2 : y_1=1-x \right\rbrace$ of discontinuity points  of the indicator function. Similarly it is possible to prove that also $\beta_1$ is a continuous function on $\mathbb{R}\times\mathbb{R}\times\mathbb{R}^+\times\mathbb{R}^+ $. 
\end{remark}

The following result holds.
	
	\begin{teo}\label{risultato_principale}
		Under the notation introduced above and under \textbf{I1-2} 
		we have 
		
		\begin{equation}
			\lim_{M\rightarrow \infty} \textrm{Var}\left(\dfrac{1}{\sqrt{M}}\sum_{m=1}^{M}f(Y_m^{M})\right) =\sigma^2_1(f)+\sigma^2_2(f)
		\end{equation}
		where 
		\begin{equation}\label{sigma1}
		\sigma^2_1(f)= \eta\left(\left(  g\left(f-\eta(fg) \right)\right)^2\right) \end{equation}
		 and 

		\begin{align}\label{sigma2}
		\sigma_2^2(f) := \sum\limits_{k= 0}^{\left\lceil\frac{\bar{g}}{\underline{g}}\right\rceil}\mathbb{E}\left(F_{k}\right)
		\end{align}
		with $\left( F_{k}\right)_{k\in \mathbb{N}} $  given by 
		\begin{equation} \label{defpk}
		F_{k} = \begin{cases}
		f^2(X_{1})\beta_{0}\left( U_1,g(X_1)\right) &\,k=0\\
		-f( X_{1})f( X_{k+1})\beta_{1}\left( U_1,g(X_1),\sum\limits_{\ell=2}^{k}g(X_{\ell}),g(X_{k+1})\right) &\, k>0
		\end{cases}
		\end{equation}  
		where $U_1\sim \mathcal{U}(0,1)$ is independent of $X_1,\cdots,X_{k+1}$. 
		
		Moreover the following convergence in distribution holds
				\begin{equation}
		\sqrt{M}\left( \frac{1}{M}\sum_{m=1}^{M}f(Y_m^{M})-\eta(fg) \right)\overset{d}{\Longrightarrow} \mathcal{N}\left( 0,\sigma^2_1(f)+\sigma^2_2(f)\right).
		\end{equation} 
		
	\end{teo}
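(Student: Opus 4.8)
The plan is to treat separately the two terms of the variance splitting (\ref{splitting}) and, for the central limit theorem, to exploit the matching decomposition
\begin{equation*}
\sqrt{M}\Big(\tfrac1M\textstyle\sum_{m=1}^M f(Y_m^M)-\eta(fg)\Big)=A_M+B_M,
\end{equation*}
where $B_M:=\sqrt M\big(\frac{\sum_{m}g(X_m)f(X_m)}{\sum_\ell g(X_\ell)}-\eta(fg)\big)$ is $\mathcal F$-measurable and, by (\ref{selection}), equals $\sqrt M(\mathbb E(\frac1M\sum_m f(Y_m^M)\mid\mathcal F)-\eta(fg))$, while $A_M:=\frac1{\sqrt M}\sum_{m}(f(Y_m^M)-\mathbb E(f(Y_m^M)\mid\mathcal F))$ is centred given $\mathcal F$. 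With this notation the first term of (\ref{splitting}) is $\mathrm{Var}(B_M)$ and the second is $\mathbb E(\mathrm{Var}(A_M\mid\mathcal F))$; I will show these converge to $\sigma_1^2(f)$ and $\sigma_2^2(f)$ respectively.

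For $B_M$ I would apply the delta method to the ratio. Since $g$ is bounded with $\underline g>0$, one has $\frac1M\sum_\ell g(X_\ell)\to\mathbb E(g(X_1))=1$ almost surely, and a first order expansion gives
\begin{equation*}
B_M=\frac1{\sqrt M}\sum_{m=1}^M g(X_m)\big(f(X_m)-\eta(fg)\big)+o_{\mathbb P}(1).
\end{equation*}
The summands are i.i.d., centred (their common mean is $\eta(fg)-\eta(fg)\,\mathbb E(g(X_1))=0$) and bounded, with variance $\eta((g(f-\eta(fg)))^2)=\sigma_1^2(f)$. The classical central limit theorem then yields $B_M\overset{d}{\Longrightarrow}\mathcal N(0,\sigma_1^2(f))$, and boundedness of $f$ and $g$ (hence of the normalised ratio, since $\sum_\ell g(X_\ell)\ge M\underline g$) provides the uniform integrability needed for $\mathrm{Var}(B_M)\to\sigma_1^2(f)$.

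The delicate term is $\mathbb E(\mathrm{Var}(A_M\mid\mathcal F))$. Writing $S_\ell:=\sum_{j=1}^\ell w_j^M$, $S_0:=0$, each point $m-U_m\in(m-1,m)$ selects the unique index $\ell$ with $S_{\ell-1}<m-U_m\le S_\ell$, so that $\sum_m f(Y_m^M)=\sum_\ell f(X_\ell)N_\ell$ with $N_\ell:=\#\{m:\,S_{\ell-1}<m-U_m\le S_\ell\}$. Setting $p_{m,\ell}:=\mathbb P(S_{\ell-1}<m-U_m\le S_\ell\mid\mathcal F)=\big|(m-1,m]\cap(S_{\ell-1},S_\ell]\big|$, conditional independence of the $U_m$ and disjointness over $\ell$ of the selection events for a fixed $m$ give $\mathrm{Cov}(N_\ell,N_{\ell'}\mid\mathcal F)=\sum_m(1_{\ell=\ell'}\,p_{m,\ell}-p_{m,\ell}p_{m,\ell'})$, whence
\begin{equation*}
M\,\mathrm{Var}(A_M\mid\mathcal F)=\sum_\ell f^2(X_\ell)\sum_m p_{m,\ell}(1-p_{m,\ell})-\sum_{\ell\neq\ell'}f(X_\ell)f(X_{\ell'})\sum_m p_{m,\ell}p_{m,\ell'}.
\end{equation*}
Each overlap sum depends on $\mathcal F$ only through the fractional parts of the relevant endpoints: a direct computation shows $\sum_m p_{m,\ell}(1-p_{m,\ell})=\beta_0(\{S_{\ell-1}\},w_\ell^M)$, and for $\ell'=\ell+k$ with $k\ge1$ the quantity $\sum_m p_{m,\ell}p_{m,\ell'}$ is expressed through $\{S_{\ell-1}\}$, $w_\ell^M$, the gap $\sum_{j=\ell+1}^{\ell+k-1}w_j^M$ and $w_{\ell+k}^M$, so that summing the two orderings reproduces $-\beta_1$. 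Because $w_j^M\ge\underline g/\bar g$, a gap below $1$ forces $k\le\lceil\bar g/\underline g\rceil$, so only finitely many diagonals contribute. It then remains to pass to the limit: $w_j^M\to g(X_j)$ almost surely, the fractional part $\{S_{\ell-1}\}$ converges in law to a variable uniform on $[0,1]$ and asymptotically independent of the finite window $(X_\ell,\dots,X_{\ell+k})$ of nearby positions — this is exactly the statement proved in the companion paper \cite{CLTfract} — and an averaging argument over $\ell$ identifies the limit of $\mathbb E(\mathrm{Var}(A_M\mid\mathcal F))$ with $\sum_{k\ge0}\mathbb E(F_k)=\sigma_2^2(f)$, the variable $U_1$ in (\ref{defpk}) playing the role of the limiting uniform fractional part. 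This limit passage is the main obstacle: one must interchange the limit with the average over $\ell$, control the coupling induced by the common normalisation $\sum_\ell g(X_\ell)$, and discard the $O(1)$ boundary indices; the joint convergence result of \cite{CLTfract} is precisely what makes this possible.

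Finally, for the convergence in distribution I would argue on characteristic functions, writing $\mathbb E(e^{it(A_M+B_M)})=\mathbb E(e^{itB_M}\,\mathbb E(e^{itA_M}\mid\mathcal F))$. Conditionally on $\mathcal F$, $A_M$ is a sum of independent centred terms bounded by $2\|f\|_\infty/\sqrt M$, so the Lindeberg–Feller theorem yields $\mathbb E(e^{itA_M}\mid\mathcal F)=e^{-t^2 V_M/2}+o_{\mathbb P}(1)$ with $V_M:=\mathrm{Var}(A_M\mid\mathcal F)$. Upgrading the previous convergence to $V_M\to\sigma_2^2(f)$ in probability (via a variance estimate on $V_M$, again using \cite{CLTfract}) and invoking the asymptotic independence between $B_M$ and the fractional parts that govern $A_M$ — the companion paper shows the Gaussian first component of its limiting vector to be independent of the uniform components — the product factorises in the limit, giving $\mathbb E(e^{it(A_M+B_M)})\to e^{-t^2\sigma_1^2(f)/2}\,e^{-t^2\sigma_2^2(f)/2}$ and hence the announced convergence to $\mathcal N(0,\sigma_1^2(f)+\sigma_2^2(f))$.
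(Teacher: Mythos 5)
Your proposal is correct and follows essentially the same route as the paper: the same variance decomposition into the $\mathcal F$-measurable ratio term and the conditional variance, the same rewriting of $M\,\mathrm{Var}(A_M\mid\mathcal F)$ through the overlap integrals leading to $\beta_0$ and $\beta_1$ evaluated at the fractional parts $\{S_{\ell-1}\}$ (with the same observation that only $k\leq\lceil\bar g/\underline g\rceil$ diagonals survive), the same appeal to the companion paper for the joint convergence of $(B_M,\{S_{\lceil\alpha M\rceil-1}\})$ to a Gaussian independent of a uniform, and the same characteristic-function factorization $\mathbb E(e^{itB_M}\,\mathbb E(e^{itA_M}\mid\mathcal F))$. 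The only organizational difference is in the last step: you propose to first upgrade $V_M:=\mathrm{Var}(A_M\mid\mathcal F)\to\sigma_2^2(f)$ to convergence in probability and then conclude by Slutsky, whereas the paper never isolates this statement and instead expands $\mathbb E(e^{itA_M}\mid\mathcal F)$ into the power series $\sum_s(-t^2/2)^s V_M^s/s!$ and computes the limit of each joint moment $\mathbb E(e^{itB_M}V_M^s)$ using the $s$-point version of the fractional-part convergence (Proposition~\ref{conv_dist}) together with the independence of the $s$ limiting uniforms. Your shortcut is legitimate, but be aware that the ``variance estimate on $V_M$'' you invoke is exactly the $s=2$ instance of that multi-point result — $\mathbb E(V_M^2)\to(\sigma_2^2(f))^2$ requires knowing that the fractional parts at two distinct macroscopic locations $\lceil\alpha_1 M\rceil,\lceil\alpha_2 M\rceil$ become independent uniforms, asymptotically independent of the two local windows of positions — so the amount of work hidden in that parenthesis is the same as what the paper spends on its general-$s$ computation; it is not a consequence of the one-point ($s=1$) convergence alone.
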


	We will split the proof of the theorem into two parts: in Section \ref{secasymptoticvariance}, we are going to prove the result about the asymptotic variance and in Section \ref{secclt}, the proof of the Central Limit Theorem is provided.

\section{Asymptotic Variance}\label{secasymptoticvariance}

 The following proposition provides the asymptotic behaviour of the first term in (\ref{splitting}). We provide its proof for the sake of completeness.
	
	\begin{prop} Under \textbf{I1}, the following convergence in distribution holds
		\begin{equation} \label{tcl}
		\sqrt{M}\left(\dfrac{\sum_{m=1}^{M}g(X_{m})f(X_{m})}{\sum_{\ell=1}^{M}g(X_{\ell})} -\eta(fg) \right)\overset{d}{\Longrightarrow} \mathcal{N}\left( 0,\sigma^2_1(f)\right)
		\end{equation}
		where $\sigma^2_1(f)$ has been defined in (\ref{sigma1}). Moreover
		\begin{equation}\label{purpose}
		\lim_{M\rightarrow \infty}\textrm{Var}\left( \sqrt{M}\dfrac{\sum_{m=1}^{M}g(X_{m})f(X_{m})}{\sum_{\ell=1}^{M}g(X_{\ell})} \right) =\sigma^2_1(f).
		\end{equation}
	\end{prop}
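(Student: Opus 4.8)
The plan is to recognize the quantity as a self-normalized (ratio) sum, reduce it to a classical i.i.d. central limit theorem via Slutsky's lemma, and then upgrade convergence in distribution to convergence of the variance through a uniform integrability argument. First I would separate the centred numerator from the normalizing denominator: setting $Z_m = g(X_m)\bigl(f(X_m)-\eta(fg)\bigr)$ and $B_M = \frac{1}{M}\sum_{m=1}^{M}g(X_m)$, a direct algebraic rearrangement gives
\begin{equation}
\sqrt{M}\left(\frac{\sum_{m=1}^{M}g(X_m)f(X_m)}{\sum_{\ell=1}^{M}g(X_\ell)}-\eta(fg)\right)=\frac{1}{B_M}\cdot\frac{1}{\sqrt{M}}\sum_{m=1}^{M}Z_m.
\end{equation}
The variables $Z_m$ are i.i.d., and using the normalization $\eta(g)=1$ one checks that $\mathbb{E}(Z_1)=\eta(fg)-\eta(fg)\eta(g)=0$, while $\mathbb{E}(Z_1^2)=\eta\bigl((g(f-\eta(fg)))^2\bigr)=\sigma_1^2(f)$. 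Under \textbf{I1} the function $f$ is bounded and $g$ is bounded by hypothesis, so the $Z_m$ are bounded and in particular admit moments of every order.

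Next I would apply the classical central limit theorem to the centred bounded i.i.d. sequence $(Z_m)_m$, obtaining $\frac{1}{\sqrt{M}}\sum_{m=1}^{M}Z_m\overset{d}{\Longrightarrow}\mathcal{N}(0,\sigma_1^2(f))$. The strong law of large numbers gives $B_M\to\eta(g)=1$ almost surely, hence in probability, and Slutsky's lemma then yields
\begin{equation}
\frac{1}{B_M}\cdot\frac{1}{\sqrt{M}}\sum_{m=1}^{M}Z_m\overset{d}{\Longrightarrow}\mathcal{N}(0,\sigma_1^2(f)),
\end{equation}
which is exactly (\ref{tcl}).

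For the convergence of the variance (\ref{purpose}), note that since $\textrm{Var}$ is translation invariant, the variance appearing there equals the variance of the left-hand side of (\ref{tcl}); it therefore suffices to combine the convergence in distribution just obtained with uniform integrability of the squares of the prelimit variables. Here I would exploit the deterministic lower bound $B_M\geq\underline{g}>0$, valid because $g\geq\underline{g}$, which gives $\bigl(\frac{1}{B_M}\frac{1}{\sqrt{M}}\sum_m Z_m\bigr)^2\leq\underline{g}^{-2}\bigl(\frac{1}{\sqrt{M}}\sum_m Z_m\bigr)^2$. It then remains to show that $\bigl\{(\frac{1}{\sqrt{M}}\sum_m Z_m)^2\bigr\}_M$ is uniformly integrable, which follows from an $L^2$ bound on it, i.e. a uniform bound on $\mathbb{E}\bigl[(\frac{1}{\sqrt{M}}\sum_m Z_m)^4\bigr]$. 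The latter is a routine computation for centred i.i.d. summands with finite fourth moment: expanding the fourth power, only the diagonal and the pair terms survive, yielding $\frac{1}{M}\mathbb{E}(Z_1^4)+3\frac{M-1}{M}(\mathbb{E}(Z_1^2))^2$, which is bounded in $M$. Uniform integrability of the squares then upgrades the convergence in distribution to convergence of the first two moments; since the limiting mean is $0$ and the limiting second moment is $\sigma_1^2(f)$, we conclude that the variance tends to $\sigma_1^2(f)$.

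The analysis presents no serious obstacle; the only point requiring care is the passage from convergence in distribution to convergence of the variance, which must be fed by a uniform integrability input rather than being taken for granted. The structural feature that makes this input cheap to obtain is precisely the two-sided boundedness of $g$: the lower bound $\underline{g}$ controls $1/B_M$ uniformly, while the upper bound $\bar{g}$ together with \textbf{I1} makes the summands $Z_m$ bounded, so every moment estimate reduces to one for bounded i.i.d. variables.
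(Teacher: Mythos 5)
Your proposal is correct and follows essentially the same route as the paper: the same centring and decomposition with $h=g\left(f-\eta(fg)\right)$, the classical CLT plus the strong law and Slutsky's lemma for (\ref{tcl}), and the same fourth-moment computation $\frac{1}{M}\eta(h^4)+\frac{3(M-1)}{M}\eta(h^2)^2$ to control the second moments uniformly in $M$. The only (minor) divergence is in the final step: the paper shows that $\sqrt{M}\bar{h}_M/\bar{g}_M-\sqrt{M}\bar{h}_M$ tends to $0$ in $L^2$ via Cauchy--Schwarz and transfers the exactly known first two moments of $\sqrt{M}\bar{h}_M$, whereas you dominate the square of the self-normalized sum by $\underline{g}^{-2}\left(\frac{1}{\sqrt{M}}\sum_m Z_m\right)^2$ and invoke the standard uniform-integrability upgrade of convergence in distribution to convergence of the first two moments; both arguments are valid and rest on the same moment bound and on the two-sided boundedness of $g$.
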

	\begin{proof}
		 We recall that we suppose that $\eta(g)=1$. Let us define $h:=g\left(f-\eta(fg) \right)$. By observing that $\eta(h)=0$ and \begin{equation} \label{riscrittura}
		\sqrt{M}\dfrac{\sum_{m=1}^{M}g(X_{m})f(X_{m})}{\sum_{\ell=1}^{M}g(X_{\ell})} -\sqrt{M}\eta(fg)=\dfrac{M}{\sum\limits_{m=1}^{M}g(X_{m})}\times\frac{1}{\sqrt{M}}\sum\limits_{m=1}^{M}h(X_{m}),
		\end{equation} by Slutsky's theorem we have
		$$ \sqrt{M}\left(\dfrac{\sum_{m=1}^{M}g(X_{m})f(X_{m})}{\sum_{\ell=1}^{M}g(X_{\ell})} -\eta(fg) \right)\overset{d}{\Longrightarrow} \mathcal{N}\left( 0,\eta(h^2)\right).$$
		Let us now prove (\ref{purpose}). Let us preliminary study the following quantity 
		\begin{align*}
		\mathbb{E}\left(\left( \frac{1}{\sqrt{M}}\sum\limits_{m=1}^{M}h(X_m)\right)^4  \right) = \frac{1}{M^2}\sum\limits_{m_1,m_2,m_3,m_4=1}^{M}\mathbb{E}\left(\prod_{i=1}^{4} h(X_{m_i})\right).
		\end{align*}
		
		Among the $M^4$ expectations appearing in the sum on the right-hand side, the ones where one index is different from the other three are equal to $0$ by the independence of the $X_i$ and the fact that $\eta(h)=0$. Therefore 
		
		\begin{align}
		\mathbb{E}\left(\left( \frac{1}{\sqrt{M}}\sum\limits_{m=1}^{M}h(X_m)\right)^4  \right) = \frac{1}{M}\eta(h^4)+\frac{3(M-1)}{M}\eta(h^2)^2\leq D, \label{boundness}
		\end{align}
		where $D$ is a finite constant depending on $g$ and $f$ but not on $M$.\\
		Since (\ref{riscrittura}) holds, to prove (\ref{purpose}) we can study the convergence as $M$ goes to infinity of the variance of  $\sqrt{M}\dfrac{1}{\bar{g}_M} \bar{h}_M$ given by 
		$$\mathbb{E}\left(\left(\sqrt{M}\dfrac{\bar{h}_M}{\bar{g}_M}\  \right)^2  \right) - \mathbb{E}\left(\sqrt{M}\dfrac{\bar{h}_M}{\bar{g}_M} \right)^2$$ 
		where we denote by $\bar{g}_M$ the empirical mean $\dfrac{1}{M} \sum\limits_{m=1}^{M}g(X_{m})$  and by $\bar{h}_M$ the empirical mean $\dfrac{1}{M} \sum\limits_{m=1}^{M}h(X_{m})$.\\
		
		By the Cauchy–Schwarz inequality we have 
		
		\begin{align*}
		\mathbb{E}\left(
		\left(\sqrt{M}\frac{\bar{h}_M}{\bar{g}_M} -\sqrt{M}\bar{h}_M \right)^2 \right) &=\mathbb{E}\left(
		\left( \sqrt{M}\bar{h}_M\right) ^2\left(\frac{1}{\bar{g}_M} -1 \right)^2 \right)\\
		& \leq \mathbb{E}^\frac{1}{2}\left(\left( \sqrt{M}\bar{h}_M\right) ^4 \right) \mathbb{E}^\frac{1}{2}\left(\left(\frac{1}{\bar{g}_M} -1 \right)^4 \right)\leq D^\frac{1}{2} \cdot \mathbb{E}^\frac{1}{2}\left(\left(\frac{1}{\bar{g}_M} -1 \right)^4 \right),
		\end{align*}
		
		where the right-hand side converges to $0$ as $M$ goes to infinity by the Strong Law of Large Numbers and by Lebesgue's theorem. With $\eta(h)=0$, this in particular implies that  
		
		\begin{align*}
		0=\lim_{M\rightarrow\infty} \left| \mathbb{E}^\frac{1}{2}\left( \left(\sqrt{M}\frac{\bar{h}_M}{\bar{g}_M} \right)^2 \right)-\mathbb{E}^\frac{1}{2}\left( \left(\sqrt{M}\bar{h}_M \right)^2 \right)  \right|  =  \lim_{M\rightarrow\infty} \left| \mathbb{E}^\frac{1}{2}\left( \left(\sqrt{M}\frac{\bar{h}_M}{\bar{g}_M} \right)^2 \right)-\eta(h^2)^\frac{1}{2} \right|
		\end{align*}
		
		and
		\begin{align*}
		0=\lim_{M\rightarrow\infty} \left| \mathbb{E}\left( \sqrt{M}\frac{\bar{h}_M}{\bar{g}_M}  \right)-\mathbb{E}\left( \sqrt{M}\bar{h}_M \right)  \right| = \lim_{M\rightarrow\infty} \left| \mathbb{E}\left( \sqrt{M}\frac{\bar{h}_M}{\bar{g}_M}  \right) \right| 
		\end{align*}
		and so the proof is complete.
	\end{proof}

	Let us now study the second term in the expression (\ref{splitting}). Our purpose is to prove the following result:
	\begin{teo}\label{mainresult}
	Under	\textbf{I1-2}, the following convergence holds
		\begin{align}
		\lim_{M\rightarrow\infty}&	\mathbb{E}\left( Var\left(\dfrac{1}{\sqrt{M}}\sum_{m=1}^{M}f(Y_m^{M}) |\mathcal{F}\right)\right)\label{final_result0}=\sigma_2^2(f)
		\end{align}
		where $\sigma_2^2(f)$ has been defined in (\ref{sigma2}).

	\end{teo}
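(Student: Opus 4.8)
The plan is to exploit the conditional independence of the $(Y_m^M)_{1\le m\le M}$ given $\mathcal{F}$ to reduce the left-hand side of \eqref{final_result0} to an average of one-dimensional conditional variances, and then to recognize $\beta_0$ and $\beta_1$ as the exact result of summing certain overlap contributions over the strata. Write $S_\ell:=\sum_{j=1}^\ell w_j^M$ with $S_0:=0$, so that the intervals $(S_{\ell-1},S_\ell]$ of respective lengths $w_\ell^M$ partition $(0,M]$. Conditionally on $\mathcal{F}$ the variable $m-U_m$ is uniform on $(m-1,m)$, hence $Y_m^M=X_\ell$ with conditional probability $p_\ell^{(m)}:=\lvert(S_{\ell-1},S_\ell]\cap(m-1,m)\rvert$, the overlap of particle $\ell$'s interval with the $m$-th stratum. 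Using conditional independence I would first write
\[
\mathrm{Var}\Big(\tfrac{1}{\sqrt M}\sum_{m=1}^M f(Y_m^M)\,\big|\,\mathcal F\Big)=\frac1M\sum_{m=1}^M\mathrm{Var}\big(f(Y_m^M)\,\big|\,\mathcal F\big),
\]
and then expand each conditional variance as $\sum_\ell f(X_\ell)^2 p_\ell^{(m)}(1-p_\ell^{(m)})-\sum_{\ell\ne\ell'}f(X_\ell)f(X_{\ell'})p_\ell^{(m)}p_{\ell'}^{(m)}$, separating a diagonal and an off-diagonal part.

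The second step is the geometric heart of the computation: summing over the strata $m$. For the diagonal part, any full stratum contained in an interval contributes $p_\ell^{(m)}(1-p_\ell^{(m)})=0$, so only the two boundary strata matter, and a short case analysis according to whether $(S_{\ell-1},S_\ell)$ crosses an integer shows $\sum_m p_\ell^{(m)}(1-p_\ell^{(m)})=\beta_0(\{S_{\ell-1}\},w_\ell^M)$. For the off-diagonal part I would group the pairs by their distance $k:=\ell'-\ell>0$; two particles at distance $k$ can share a stratum only when the intermediate mass $\sum_{j=\ell+1}^{\ell+k-1}w_j^M$ is $<1$, and since $w_j^M\ge\underline g/\bar g$, only $k\le\lceil\bar g/\underline g\rceil$ contribute, which is exactly the truncation in \eqref{sigma2}. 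A similar but longer case analysis identifies $2\sum_m p_\ell^{(m)}p_{\ell+k}^{(m)}$, the contribution of the unordered pair $\{\ell,\ell+k\}$ counted in both orders, with $\beta_1\big(\{S_{\ell-1}\},w_\ell^M,S_{\ell+k-1}-S_\ell,w_{\ell+k}^M\big)$. Altogether this yields the exact identity
\[
\frac1M\sum_{m=1}^M\mathrm{Var}\big(f(Y_m^M)\,\big|\,\mathcal F\big)=\frac1M\sum_{\ell}f(X_\ell)^2\beta_0(\{S_{\ell-1}\},w_\ell^M)-\frac1M\sum_{k=1}^{\lceil\bar g/\underline g\rceil}\sum_{\ell}f(X_\ell)f(X_{\ell+k})\beta_1\big(\{S_{\ell-1}\},w_\ell^M,S_{\ell+k-1}-S_\ell,w_{\ell+k}^M\big),
\]
with the obvious conventions for boundary indices.

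The third step replaces the normalized weights by the unnormalized ones. Since $w_\ell^M=\tfrac{M}{\sum_j g(X_j)}g(X_\ell)$ and $\tfrac{M}{\sum_j g(X_j)}\to1$ almost surely by the strong law of large numbers, while $\beta_0,\beta_1$ are bounded and continuous by Remark \ref{continuitybeta} and $f$ is bounded by \textbf{I1}, dominated convergence lets me replace every $w_j^M$ by $g(X_j)$ and $S_{\ell+k-1}-S_\ell$ by $\sum_{j=\ell+1}^{\ell+k-1}g(X_j)$ up to a vanishing error. After taking expectations it remains to pass to the limit in the averages $\frac1M\sum_\ell\mathbb E\big(\Phi(\{S_{\ell-1}\},X_\ell,\dots,X_{\ell+k})\big)$ for the bounded functions $\Phi$ produced above.

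This last passage is the main obstacle, and it is precisely where the companion paper \cite{CLTfract} enters. For a fixed proportion $\alpha\in(0,1)$ the fractional part $\{S_{\lceil\alpha M\rceil-1}\}$ converges in distribution to a uniform variable on $[0,1]$ that is asymptotically independent of the finite block $(X_{\lceil\alpha M\rceil},\dots,X_{\lceil\alpha M\rceil+k})$, the delicate point being the coupling of $\{S_{\ell-1}\}$ to the whole sample through the normalization $\sum_j g(X_j)$, which is resolved in \cite{CLTfract}. Writing $\frac1M\sum_{\ell}\mathbb E(\Phi_\ell)$ as a Riemann sum $\int_0^1\mathbb E\big(\Phi_{\lceil\alpha M\rceil}\big)\,d\alpha+o(1)$, applying this convergence for each $\alpha$ together with the boundedness of $\Phi$, and invoking dominated convergence in $\alpha$, I would obtain that each average converges to $\mathbb E\big(\Phi(U_1,X_1,\dots,X_{k+1})\big)$ with $U_1\sim\mathcal U(0,1)$ independent of the $X_i$. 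Specializing $\Phi$ to the diagonal and the distance-$k$ summands reproduces exactly $\mathbb E(F_0)$ and $\mathbb E(F_k)$, and summing over $0\le k\le\lceil\bar g/\underline g\rceil$ gives $\sigma_2^2(f)$, completing the proof.
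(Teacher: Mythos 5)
Your proposal is correct and follows essentially the same route as the paper: the exact identity expressing the conditional variance through $\beta_0$ and $\beta_1$ is the paper's Proposition \ref{rewriting_term} (proved via the same overlap probabilities, there called $q_{m,i}^M$), and the passage to the limit via the representation $\frac{1}{M}\sum_{i}(\cdot)=\int_0^1(\cdot)_{\lceil\alpha M\rceil}\,d\alpha$, the companion paper's uniform limit of the fractional part jointly with the local block, and dominated convergence is exactly the paper's Proposition \ref{conv_dist} plus Lebesgue's theorem. The only cosmetic difference is that you replace the normalized weights by $g(X_j)$ via the strong law before taking limits, whereas the paper keeps them and absorbs their convergence into the joint convergence in distribution by Slutsky.
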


	To prove Theorem \ref{mainresult} we need some preliminary results the proofs of which are given in Section \ref{sectionfinale}. By the conditional independence of the random vectors $(Y_m^{M})_{1\leq m \leq M}$ with respect to the $\sigma$-algebra $\mathcal{F}$, we have
	\begin{align}
	\textrm{Var}\left(\dfrac{1}{\sqrt{M}}\sum_{m=1}^{M}f(Y_m^{M}) \mathrel{\Big|}\mathcal{F}\right)&=\dfrac{1}{M} \sum_{m=1}^{M} \textrm{Var}\left(f(Y_m^{M}) \mathrel{\Big|}\mathcal{F}\right)\\
	&=\dfrac{1}{M}\sum_{m=1}^{M} \mathbb{E}\left(f^2(Y_m^{M}) \mathrel{\Big|}\mathcal{F}\right)- \dfrac{1}{M}\sum_{m=1}^{M} \mathbb{E}\left(f(Y_m^{M}) \mathrel{\Big|}\mathcal{F}\right)^2\label{toberewritten2}.
	\end{align}

	We introduce the following notation that will be useful in what follows: for $i=1,\cdots,M$, let us denote $u_{i}^M:= \left\lbrace w_1^M+\cdots+w_{i}^M \right\rbrace ,$ $\mu_{i}^M:= \lfloor w_1^M+\cdots+w_{i}^M \rfloor+1$  where $w_{i}^M=\frac{Mg(X_i)}{\sum\limits_{\ell=1}^{M}g(X_\ell)}$ has been introduced in (\ref{stratsampling}) and by convention  $u_{0}^M=0$, $\mu_{0}^M=1$.

We now rewrite in a more explicit way the conditional variance  $\textrm{Var}\left(\frac{1}{\sqrt{M}}\sum\limits_{m=1}^{M}f(Y_m^{M}) |\mathcal{F}\right)$. The proof is given in Section \ref{sectionfinale}.
	\begin{prop}\label{rewriting_term}
		 Under \textbf{I1}, we have
		\begin{align}
		& \textrm{Var}\bigg(\dfrac{1}{\sqrt{M}}\sum_{m=1}^{M}f(Y_m^{M}) \mathrel{\Big|}\mathcal{F}\bigg) \label{key} \\
		&\phantom{=}=\frac{1}{M}\sum\limits_{i=1}^{M} f^2( X_{i})\beta_{0}(u_{i-1}^M,w_i^M)  -\frac{1}{M}\sum_{k=1}^{(M-1)\wedge \left\lceil\frac{\bar{g}}{\underline{g}}\right\rceil}\sum\limits_{i=1}^{M-k}f( X_{i})f( X_{i+k})\beta_{1}(u_{i-1}^M,w_i^M,\sum\limits_{\ell=2}^{k}w_{i+\ell-1}^M,w_{i+k}^M) \label{key2}
		\end{align}
		where $\beta_{0}$ and $\beta_{1}$ are respectively defined in (\ref{beta0}) and (\ref{beta1}).
	\end{prop}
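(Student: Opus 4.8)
The plan is to describe the conditional law of each $Y_m^M$ given $\mathcal{F}$ and then expand the per-slot variances. First I would observe that, conditionally on $\mathcal{F}$, the variables $(U_m)_{1\le m\le M}$ are still i.i.d.\ uniform on $(0,1)$, so from (\ref{stratsampling}) the event $\{Y_m^M=X_\ell\}$ is $\{\sum_{j=1}^{\ell-1}w_j^M<m-U_m\le\sum_{j=1}^{\ell}w_j^M\}$. Writing $W_\ell:=\sum_{j=1}^{\ell}w_j^M$ (so $W_0=0$, $W_M=M$, $u_\ell^M=\{W_\ell\}$ and $\mu_\ell^M=\lfloor W_\ell\rfloor+1$), the conditional probability $p_{m,\ell}:=\mathbb{P}(Y_m^M=X_\ell\mid\mathcal{F})$ equals the Lebesgue measure of $(W_{\ell-1},W_\ell]\cap(m-1,m)$, i.e.\ the overlap of the weight-interval of particle $\ell$ with the $m$-th unit slot. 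Two identities will be used repeatedly: $\sum_\ell p_{m,\ell}=1$ for each $m\in\{1,\dots,M\}$ (each slot selects exactly one particle) and $\sum_m p_{m,\ell}=w_\ell^M$ (the slots tile $(0,M]$). Starting from (\ref{toberewritten2}) and using $\mathrm{Var}(f(Y_m^M)\mid\mathcal{F})=\sum_\ell f^2(X_\ell)p_{m,\ell}-(\sum_\ell f(X_\ell)p_{m,\ell})^2$, I would expand the square, separate the diagonal ($\ell=\ell'$) from the off-diagonal terms and exchange the order of summation over $m$ and over the particle indices, reaching
\[
\mathrm{Var}\Big(\tfrac{1}{\sqrt{M}}\,\textstyle\sum_{m}f(Y_m^{M})\,\Big|\,\mathcal{F}\Big)=\frac{1}{M}\sum_{\ell}f^2(X_\ell)A_\ell-\frac{1}{M}\sum_{\ell\neq\ell'}f(X_\ell)f(X_{\ell'})B_{\ell,\ell'},
\]
with $A_\ell:=\sum_m p_{m,\ell}(1-p_{m,\ell})$ and $B_{\ell,\ell'}:=\sum_m p_{m,\ell}p_{m,\ell'}$. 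It then remains to identify $A_\ell=\beta_0(u_{\ell-1}^M,w_\ell^M)$ and $2B_{\ell,\ell+k}=\beta_1(\dots)$.

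For the diagonal term I would compute $A_\ell=w_\ell^M-\sum_m p_{m,\ell}^2$ from an elementary description of the overlaps. Translating so that $W_{\ell-1}=x\in[0,1)$ (with $x=u_{\ell-1}^M$, $y_1=w_\ell^M$), either $x+y_1<1$, in which case $(x,x+y_1]$ lies in a single slot, $\sum_m p_{m,\ell}^2=y_1^2$, and $A_\ell=y_1(1-y_1)$; or $x+y_1\ge1$, in which case the interval overlaps the first slot by $1-x$, exactly $\lfloor x+y_1\rfloor-1$ further slots fully, and a last slot by $\{x+y_1\}$, so that $\sum_m p_{m,\ell}^2=(1-x)^2+(\lfloor x+y_1\rfloor-1)+\{x+y_1\}^2$ and a short simplification gives $A_\ell=\{x+y_1\}(1-\{x+y_1\})+x(1-x)$. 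Reinstating the indicator $1_{\{y_1<1-x\}}$ merges the two cases into $A_\ell=\beta_0(x,y_1)$, which is exactly (\ref{beta0}).

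The off-diagonal term is the crux. For fixed $\ell<\ell'=\ell+k$, the product $p_{m,\ell}p_{m,\ell'}$ can be nonzero only for a slot $m$ overlapping both $(W_{\ell-1},W_\ell]$ and $(W_{\ell+k-1},W_{\ell+k}]$; since these intervals are separated by $y_2:=\sum_{j=2}^{k}w_{\ell+j-1}^M=W_{\ell+k-1}-W_\ell$, such an $m$ must lie in $(W_{\ell+k-1},W_\ell+1)$, an interval of length $1-y_2$, so there is at most one, namely $m^\star=\lfloor W_\ell\rfloor+1=\mu_\ell^M$, and it contributes iff $y_2<1-\{x+y_1\}$. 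I would then read off the two overlaps at $m^\star$: setting $v:=\{x+y_1\}=u_\ell^M$ and $y_3:=w_{\ell+k}^M$, the first is $p_{m^\star,\ell}=v-x\,1_{\{x+y_1<1\}}$ (all of $(x,x+y_1]$ lies in slot $m^\star$ when $x+y_1<1$, otherwise only the part beyond $\mu_\ell^M-1$, of length $\{W_\ell\}=v$), and the second is $p_{m^\star,\ell+k}=\min(y_3,1-v-y_2)=(1-v-y_2)-(1-v-y_2-y_3)1_{\{y_2+y_3<1-v\}}$. Multiplying these two expressions, and using that $x+y_1<1$ forces $v=x+y_1$ so that $1-v-y_2=1-x-y_1-y_2$ and the two surviving indicators align with $\{y_1+y_2<1-x\}$ and $\{y_1+y_2+y_3<1-x\}$, one recovers exactly $\tfrac12\beta_1(x,y_1,y_2,y_3)$, i.e.\ $2B_{\ell,\ell+k}=\beta_1(u_{\ell-1}^M,w_\ell^M,\sum_{j=2}^k w_{\ell+j-1}^M,w_{\ell+k}^M)$. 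Finally, since $w_{\ell+j}^M\ge\underline{g}/\bar{g}$ for every $j$, the necessary condition $y_2<1$ forces $(k-1)\underline{g}/\bar{g}<1$, hence $k\le\lceil\bar{g}/\underline{g}\rceil$, which truncates the sum over $k$; symmetrizing $\sum_{\ell\neq\ell'}=2\sum_{\ell<\ell'}$ produces the factor $2$ already built into (\ref{beta1}) and, together with the constraint $\ell+k\le M$, yields (\ref{key2}).

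The main obstacle I anticipate is the bookkeeping in the off-diagonal step: rigorously proving that a single slot $m^\star=\mu_\ell^M$ carries the whole product $B_{\ell,\ell+k}$, handling the two distinct formulas for $p_{m^\star,\ell}$ according to whether $(x,x+y_1]$ crosses the integer $\mu_\ell^M-1$, and then checking that the four indicator-weighted monomials reassemble into $\beta_1$, where the delicate coincidence $v=\{x+y_1\}=x+y_1$ on $\{x+y_1<1\}$ is what makes the last two terms match. The boundary and measure-zero configurations (for instance $\{x+y_1\}=0$, or the slots near $m=1$ and $m=M$) are routine but must be dispatched so that the identity holds pathwise in $\omega$, before any passage to the limit.
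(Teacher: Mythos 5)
Your proposal is correct and follows essentially the same route as the paper: the overlap quantities $p_{m,\ell}$ are exactly the paper's $q_{m,\ell}^M 1_{\{\mu_{\ell-1}^M\leq m\leq \mu_{\ell}^M\}}$ from Lemma \ref{cond_expectation}, your diagonal/off-diagonal split is the paper's (\ref{eqn}), and your observation that only the single slot $m^\star=\mu_\ell^M$ can carry the cross term is what the paper encodes via the indicator $1_{\{m=\mu_i^M=\mu_{j-1}^M\}}$ and Lemma \ref{ausiliary_remark}. The algebra identifying $A_\ell$ with $\beta_0$ and $2B_{\ell,\ell+k}$ with $\beta_1$, as well as the truncation $k\leq\lceil \bar g/\underline g\rceil$ from $y_2<1$, all check out.
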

	In what follows, without specifying it every time,  we will suppose that $M\geq 1+\left\lceil\frac{\bar{g}}{\underline{g}}\right\rceil $ so that $(M-1)\wedge \left\lceil\frac{\bar{g}}{\underline{g}}\right\rceil =  \left\lceil\frac{\bar{g}}{\underline{g}}\right\rceil$.
	
	Let us observe that it is possible to rewrite  (\ref{key2}) in the following way
	\begin{align}
 \label{part2} \textrm{Var}\bigg(\dfrac{1}{\sqrt{M}}\sum_{m=1}^{M}f(Y_m^{M}) \mathrel{\Big|}\mathcal{F}\bigg) =\sum\limits_{k=0}^{\left\lceil\frac{\bar{g}}{\underline{g}}\right\rceil}\int_{0}^{1}F_{k,\alpha_1}^M d\alpha_1
	\end{align} 
	where 
	$$ F^M_{k,\alpha_1} = \begin{cases}
	f^2(X_{\lceil \alpha_1 M \rceil})\beta_{0}(u_{\lceil \alpha_1 M \rceil-1}^M,w_{\lceil \alpha_1 M \rceil}^M) &\,k=0\\
	-1_{\left\lbrace \lceil \alpha_1 M \rceil\leq M-k\right\rbrace}f( X_{\lceil \alpha_1 M \rceil})f( X_{\lceil \alpha_1 M \rceil+k})\beta_{1}(u_{\lceil \alpha_1 M \rceil-1}^M,w_{\lceil \alpha_1 M \rceil}^M,\sum\limits_{\ell=2}^{k}w_{\lceil \alpha_1 M \rceil+\ell-1}^M,w_{\lceil \alpha_1 M \rceil+k}^M) &\, k>0.
	\end{cases} $$ 
	
	Given $0< \alpha_1<1 $ and $k\in\mathbb{N}$, $ F^M_{k,\alpha_1}$  is a function of the vector 
	
	\begin{align*} 
	T_{\alpha_1,k}^M:=\left( f\left( X_{\lceil \alpha_1 M \rceil}\right) , f\left( X_{\lceil \alpha_1 M \rceil+k}\right) , u_{\lceil \alpha_1 M \rceil-1}^M,w_{\lceil \alpha_1 M \rceil}^M,\sum\limits_{\ell=2}^{k}w_{\lceil \alpha_1 M \rceil+\ell-1}^M,w_{\lceil \alpha_1 M \rceil+k}^M\right)
	\end{align*}
	which is well defined for $M$ big enough.
Therefore to study the limit behavior as $M\rightarrow \infty$ of the expectation of $ F^M_{k,\alpha_1}$, we are going to study the limit behavior of the vector $T_{\alpha_1,k}^M$. To obtain a much more general formulation useful in the next section, we introduce also the following family of vectors well defined for $M$ big enough.
Let $s\geq 1$ and $0< \alpha_1 < \cdots < \alpha_s< 1 $. For $k:=(k_1,\cdots,k_s)\in\mathbb{N}^s$, let us define $\forall i=1,\cdots,s$

\begin{equation} \label{tildeXvector}
T_{\alpha_i,k_i}^M:=\left( f\left( X_{\lceil \alpha_i M \rceil}\right) , f\left( X_{\lceil \alpha_i M \rceil+k_i}\right) , u_{\lceil \alpha_i M \rceil-1}^M,w_{\lceil \alpha_i M \rceil}^M,\sum\limits_{\ell=2}^{k_i}w_{\lceil \alpha_i M \rceil+\ell-1}^M,w_{\lceil \alpha_i M \rceil+k_i}^M\right).
\end{equation}

Moreover let 	
	
	\begin{equation} \label{tcl1}
	H^M:= \sqrt{M}\left(\dfrac{\sum\limits_{m=1}^{M}(fg)(X_m)}{\sum\limits_{m=1}^{M}g(X_m)}-\eta(fg)\right)
	\end{equation}
that thanks to (\ref{tcl})  converges in distribution to a centered Gaussian random variable with variance $\sigma_1^2(f).$
The purpose of the next proposition is therefore to study the convergence in distribution of  $\left(H^M,T_{\alpha_1,k_1}^M, \cdots , T_{\alpha_s,k_s}^M\right)$.

	\begin{prop}\label{conv_dist}
	Assume \textbf{I1-2}.
		Let $s\in \mathbb{N}^*$  and $k:=(k_1,\cdots,k_s)\in\mathbb{N}^s$. We set $K_i:=k_1+\cdots+k_{i-1}+i-1$ for $i\in \left\lbrace 2,\cdots,s \right\rbrace $ and $K_1:=0$. Let $\left(U_j \right)_{1\leq j\leq s} $ be a sequence of i.i.d random variables distributed according to the uniform law on $(0,1)$ and independent of $H\sim\mathcal{N}\left( 0,\sigma^2_1(f)\right)$ with $\left(U_1,\cdots,U_s,H \right)$ independent from $\left(X_j \right)_{1\leq j \leq K_{s+1}} $.
For $0< \alpha_1 < \cdots < \alpha_s< 1$, as $M$ goes to infinity, the following convergence in distribution holds
\begin{equation}\label{conv_dis_vector}
\left(H^M,T_{\alpha_1,k_1}^M, \cdots , T_{\alpha_s,k_s}^M\right) \overset{d}{\Longrightarrow} \left(H, T_{1,k},\cdots ,T_{s,k}\right) 
\end{equation}	
where
		\begin{equation} \label{Xvector}	 	
		T_{i,k}  := \left(f\left( X_{K_i+1}\right) ,f\left( X_ {K_{i+1}}\right) ,U_i,g(X_{K_i+1}),\sum\limits_{\ell=K_i+2}^{K_{i+1}-1}g(X_{\ell}),g(X_{K_{i+1}})\right)\quad \forall i=1,\cdots,s.
		\end{equation}
  \end{prop}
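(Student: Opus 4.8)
The plan is to isolate the three sources of randomness that appear in the limit — the Gaussian fluctuation $H$, the uniform variables $U_i$ coming from the fractional parts, and the block values of $f$ and $g$ — and to show they decouple as $M\to\infty$. First I would dispose of the normalization. Writing $S_M:=\frac1M\sum_{\ell=1}^M g(X_\ell)$, the strong law of large numbers gives $S_M\to\eta(g)=1$ almost surely, and each weight appearing in $T^M_{\alpha_i,k_i}$ is of the form $g(X_{\cdot})/S_M$ (a single value for the fourth and sixth coordinates, a finite sum for the fifth). Hence it suffices to prove the joint convergence of the enlarged vector $\big(H^M,(u^M_{\lceil\alpha_iM\rceil-1})_i,(V^M_i)_i,S_M\big)$ towards $\big(H,(U_i)_i,(V_i)_i,1\big)$, where $V^M_i$ collects the unnormalized quantities $\big(f(X_{\lceil\alpha_iM\rceil}),f(X_{\lceil\alpha_iM\rceil+k_i}),g(X_{\lceil\alpha_iM\rceil}),\sum_{\ell=2}^{k_i}g(X_{\lceil\alpha_iM\rceil+\ell-1}),g(X_{\lceil\alpha_iM\rceil+k_i})\big)$ and $V_i$ is its limit counterpart read off from (\ref{Xvector}); the announced convergence (\ref{conv_dis_vector}) then follows by the continuous mapping theorem, dividing the relevant coordinates of $V^M_i$ by $S_M$, a map continuous at the limit value $1$.

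For the enlarged vector, observe that for $M$ large the index blocks $\{\lceil\alpha_iM\rceil,\dots,\lceil\alpha_iM\rceil+k_i\}$ are pairwise disjoint, so $(V^M_i)_i$ is a family of $s$ disjoint runs of consecutive i.i.d. pairs $(f(X_\ell),g(X_\ell))$ of lengths $k_i+1$; consequently $(V^M_i)_i\overset{d}{=}(V_i)_i$ exactly, and in particular its law does not depend on $M$. The companion paper \cite{CLTfract} supplies the marginal limit $\big(H^M,(u^M_{\lceil\alpha_iM\rceil-1})_i\big)\overset{d}{\Longrightarrow}(H,(U_i)_i)$ with $H\sim\mathcal N(0,\sigma_1^2(f))$, the $U_i$ i.i.d. uniform, and all components independent (this is exactly the result quoted for (\ref{altropaper}) and its joint refinement). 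Since $H^M$ is tight by (\ref{tcl}), the fractional parts live in $[0,1]$ and $(V^M_i)_i$ has a fixed law, the enlarged family is tight, so the whole problem reduces to identifying the joint limit, i.e. to proving that $(H^M,(u^M_i)_i)$ and $(V^M_i)_i$ are asymptotically independent.

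I would establish this by conditioning on the block values. Let $\mathcal V_M$ be the $\sigma$-field generated by $(X_\ell)_{\ell\in A_M}$, where $A_M$ is the finite set of block indices, of cardinality $K_{s+1}$. For bounded continuous $\phi,\psi$ one writes $\mathbb E[\phi(H^M,(u^M_i))\,\psi((V^M_i))]=\mathbb E[\psi((V^M_i))\,\mathbb E[\phi(H^M,(u^M_i))\mid\mathcal V_M]]$, so it is enough to show $\mathbb E[\phi(H^M,(u^M_i))\mid\mathcal V_M]\to\mathbb E[\phi(H,(U_i))]$ in probability, the quantities being bounded and hence convergent in $L^1$; combined with $(V^M_i)\overset{d}{=}(V_i)$ this yields the desired factorization. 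For the Gaussian coordinate this is easy: freezing the $K_{s+1}$ block coordinates perturbs $\frac1M\sum(fg)(X_m)$ and $S_M$ by $O(1/M)$ since $f$ and $g$ are bounded under \textbf{I1}, hence changes $H^M$ by $O(1/\sqrt M)\to0$, and its conditional limit is unchanged.

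The main obstacle is the fractional-part coordinates. Contrary to $H^M$, the maps $u^M_{\lceil\alpha_iM\rceil-1}=\big\{\,M\sum_{\ell<\lceil\alpha_iM\rceil}g(X_\ell)\big/\sum_{\ell}g(X_\ell)\big\}$ are \emph{not} insensitive to the frozen coordinates: changing a single block value shifts the argument of the fractional part by an $O(1)$ amount, through both the numerator and the common denominator, which is not negligible modulo one. Thus asymptotic independence of the $U_i$ from the block values cannot come from a perturbation estimate and must be extracted from the equidistribution mechanism itself: conditionally on $\mathcal V_M$ the argument of each fractional part is the unconditioned argument plus a bounded deterministic shift, and one has to check that the convergence to the uniform law — driven under \textbf{I2} by the absolutely continuous component of the law of $g(X_1)$ and by the remaining $\sim M$ free coordinates — holds uniformly in that shift, so that the conditional limit is still $(U_i)_i$, independent of $H$ and of the conditioning. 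This is the conditional version of the companion paper's equidistribution statement, and re-running its characteristic-function argument with the harmless frozen coordinates is the technical heart of the proof; once it is in place, assembling Slutsky's theorem for $S_M$, the continuous mapping of the first paragraph and the factorization above yields (\ref{conv_dis_vector}).
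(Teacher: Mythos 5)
Your proposal is correct and follows essentially the same route as the paper: freeze the $K_{s+1}$ block coordinates, show that the conditional limit of $\left(H^M,(u^M_{\lceil\alpha_i M\rceil-1})_{1\leq i\leq s}\right)$ is $(H,U_1,\cdots,U_s)$ for every frozen value, and integrate (the paper implements this via the Freezing Lemma, an auxiliary i.i.d. copy $(\tilde X_j)$ for index bookkeeping, Slutsky's theorem for the remaining coordinates, and dominated convergence). The one step you defer as the ``technical heart'' --- equidistribution of the fractional parts when the numerator and the common denominator are shifted by bounded deterministic amounts coming from the frozen coordinates --- does not need to be re-derived: Theorem 1 of \cite{CLTfract} as invoked in the paper is already parametrized by exactly those shifts (the quantities $x$, $y_1,\cdots,y_s$, $z$ in its statement), so pointwise convergence in the frozen value plus boundedness suffices and no uniformity in the shift is required.
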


	\begin{proof}[Proof of Proposition \ref{conv_dist}]

	Let $M\geq \max\left(\frac{2}{\alpha_{1}}, \frac{k_s}{1-\alpha_{s}},\max\limits_{1\leq i\leq s-1}\frac{k_i+1}{\alpha_{i+1}-\alpha_{i}}\right)$ so that $2\leq \lceil\alpha_i M \rceil\leq M-k_i$  $\forall\,i=1,\cdots,s$ and $k_i < \lceil\alpha_{i+1} M \rceil -\lceil\alpha_{i} M \rceil $ $\forall\,i=1,\cdots,s-1$. The last condition allows us to separate the variables:  $\left(X_{\lceil \alpha_i M \rceil},\cdots,X_{\lceil \alpha_i M \rceil+k_i}\right)$  is independent of $\left(X_{\lceil \alpha_j M \rceil},\cdots,X_{\lceil \alpha_j M \rceil+k_j}\right)$ for $i,j$ distinct elements of $\left\lbrace1,\cdots,s \right\rbrace$. \\
		By defining $J_{\alpha_i,k_i}^M:= \left\lbrace \lceil \alpha_i M\rceil,\cdots,\lceil \alpha_i M\rceil+k_i \right\rbrace $ for $i=1,\cdots,s$, let $J^M:= \bigcup_{i=1}^sJ_{\alpha_i,k_i}^M$.\\ Moreover let $(\tilde{X}_{m})_{m\geq 1}$ be a sequence of i.i.d. $\mathbb{R}^d$-valued random vectors following the law $\eta$ independent of $(X_{m})_{1\leq m\leq M}$.  Given   $x:=(x_{j})_{1\leq j \leq K_{s+1}}\in\mathbb{R}^{d(K_{s+1})}$, we define

		$$ H_{x}^M := \sqrt{M}\left(\dfrac{    \sum\limits_{j=1}^{K_{s+1}}(fg)(x_{j})+ \sum\limits_{j\notin J_{s}^M}(fg)(X_j)}{\sum\limits_{j=1}^{K_{s+1}}g(x_{j})+\sum\limits_{j\notin J_{s}^M}g(X_j)}- \eta(fg)\right)$$
		and 
		$$ \tilde{H}_{x}^M:= \sqrt{M}\left(\dfrac{    \sum\limits_{j=1}^{K_{s+1}}(fg)(x_{j})+ \sum\limits_{j=1}^{M-K_{s+1}}(fg)(\tilde{X}_j)}{\sum\limits_{j=1}^{K_{s+1}}g(x_{j})+\sum\limits_{j=1}^{M-K_{s+1}}g(\tilde{X}_j)}- \eta(fg)\right).$$

		Similarly for $i=1,\cdots,s$  we define the vector $ T_{\alpha_i,k_i,x}^M$ by 

		\begin{align*}
		{\scriptstyle  \left(f\left( x_{K_i+1}\right) ,f\left( x_{K_{i+1}}\right) , \left\lbrace M\times\frac{\sum\limits_{\substack{j=1\\j\notin J_{s}^M}}^{\lceil\alpha_i M\rceil-1}g(X_j)+\sum\limits_{j=1}^{K_{i}}g(x_{j})}{\sum\limits_{j=1}^{K_{s+1}}g(x_{j})+\sum\limits_{j\notin J_{s}^M}g(X_j)} \right\rbrace,\frac{Mg(x_{K_i+1})}{\sum\limits_{j=1}^{K_{s+1}}g(x_{j})+\sum\limits_{j\notin J_{s}^M}g(X_j)},\frac{M\sum\limits_{\ell=K_i+2}^{K_{i+1}-1}g(x_{\ell })}{\sum\limits_{j=1}^{K_{s+1}}g(x_{j})+\sum\limits_{j\notin J_{s}^M}g(X_j)},\frac{Mg(x_{K_{i+1}})}{\sum\limits_{j=1}^{K_{s+1}}g(x_{j})+\sum\limits_{j\notin J_{s}^M}g(X_j)}
			\right) }
		\end{align*}
		and the vector $\tilde T_{\alpha_i,k_i,x}^M$ by 
		\begin{align*}
		{\scriptstyle  \left(f\left( x_{K_i+1}\right) ,f\left( x_{K_{i+1}}\right) , \left\lbrace M\times \frac{\sum\limits_{j=1}^{\lceil\alpha_i M\rceil-K_i-1}g(\tilde{X}_j)+\sum\limits_{j=1}^{K_{i}}g(x_{j})}{\sum\limits_{j=1}^{K_{s+1}}g(x_{j})+\sum\limits_{j=1}^{M-K_{s+1}}g(\tilde{X}_j)} \right\rbrace,\frac{Mg(x_{K_i+1})}{\sum\limits_{j=1}^{K_{s+1}}g(x_{j})+\sum\limits_{j=1}^{M-K_{s+1}}g(\tilde{X}_j)},\frac{M\sum\limits_{\ell=K_i+2}^{K_{i+1}-1}g(x_{\ell })}{\sum\limits_{j=1}^{K_{s+1}}g(x_{j})+\sum\limits_{j=1}^{M-K_{s+1}}g(\tilde{X}_j)},\frac{Mg(x_{K_{i+1}})}{\sum\limits_{j=1}^{K_{s+1}}g(x_{j})+\sum\limits_{j=1}^{M-K_{s+1}}g(\tilde{X}_j)}
			\right) }.
		\end{align*}
		
		We notice that 
		\begin{align}
			\left(H_{x}^M , T_{\alpha_1,k_1,x}^M,\cdots,T_{\alpha_s,k_s,x}^M\right) \overset{\mathcal{L}}{=} \left(\tilde{H}_{x}^M , \tilde{T}_{\alpha_1,k_1,x}^M,\cdots,\tilde{T}_{\alpha_s,k_s,x}^M\right).\label{eqinlegge}
		\end{align}

		Let us observe that by hypothesis  \textbf{I2}  the law of $ g\left(\tilde{X}_i \right)$ has a component absolutely continuous. Moreover since $\int_{\mathbb{R}}\left| 1+y\right|^{s+1}e^{-y^2}dy < \infty$ $\forall s\geq 1$, we can apply Theorem 1 of \cite{CLTfract} with $q=s$ and 	
		
		\begin{itemize}
			\item $\left(Y_i,Z_i \right)$ equal to $\left(g\left(\tilde{X}_i \right),(fg)(\tilde{X}_i)\right)  $ 
			\item $\left( \beta_M^1,\cdots,\beta_M^s,\beta_M^{s+1}\right) $ equal to $\left(\lceil\alpha_1 M\rceil-K_1-1,\cdots,\lceil\alpha_s M\rceil-K_s-1,M-K_{s+1} \right) $ 
			\item $\phi(x)=\frac{1}{x}$

			\item $x$ equal to $\sum\limits_{j=1}^{K_{s+1}}g(x_{j})$, $z$ equal to $\sum\limits_{j=1}^{K_{s+1}}(fg)(x_{j})$, $(y_1,\cdots,y_s)$ equal to $\left(\sum\limits_{j=1}^{K_{1}}g(x_{j}), \cdots,\sum\limits_{j=1}^{K_{s}}g(x_{j}) \right) $ and $\theta$ equal to $\eta(fg)$
			\item $T:=H \sim \mathcal{N}\left( 0,\sigma^2_1(f)\right)$ by (\ref{tcl}) 
		\end{itemize} 
		and deduce the following convergence in distribution as $M\rightarrow \infty$
	
		\begin{equation}\label{conv_corollary}
			\left(\tilde H_{x}^M ,\left(  \left\lbrace M\times \tfrac{\sum\limits_{j=1}^{\lceil\alpha_i M\rceil-K_i-1}g(\tilde{X}_j)+\sum\limits_{j=1}^{K_{i}}g(x_{j})}{\sum\limits_{j=1}^{K_{s+1}}g(x_{j})+\sum\limits_{j=1}^{M-K_{s+1}}g(\tilde{X}_j)} \right\rbrace\right)_{1\leq i \leq s} \right)  \overset{d}{\Longrightarrow} \left(H,U_1,\cdots,U_s \right) .
		\end{equation}

		Moreover by the Strong Law of Large Numbers \begin{align*}
			\lim\limits_{M\rightarrow \infty}\tfrac{Mg(x_{K_i+1})}{\sum\limits_{j=1}^{K_{s+1}}g(x_{j})+\sum\limits_{j=1}^{M-K_{s+1}}g(\tilde{X}_j)} =g(x_{K_i+1}),\quad \lim\limits_{M\rightarrow \infty}\frac{M\sum\limits_{\ell=K_i+2}^{K_{i+1}-1}g(x_{\ell })}{\sum\limits_{j=1}^{K_{s+1}}g(x_{j})+\sum\limits_{j=1}^{M-K_{s+1}}g(\tilde{X}_j)}= \sum\limits_{\ell=K_i+2}^{K_{i+1}-1}g(x_{\ell })
		\end{align*} and 
		\begin{align*}\lim\limits_{M\rightarrow \infty}\frac{Mg(x_{K_{i+1}})}{\sum\limits_{j=1}^{K_{s+1}}g(x_{j})+\sum\limits_{j=1}^{M-K_{s+1}}g(\tilde{X}_j)}=g(x_{K_{i+1}}).
		\end{align*}

		 Therefore by Slutsky's theorem, for each  $x:=(x_{j})_{1\leq j\leq K_{s+1}}\in\mathbb{R}^{d(K_{s+1})}$ we can conclude that the following random vector 
		
		\begin{align*}
	\left(\tilde H_{x}^M,\tilde T_{\alpha_1,k_1,x}^M,\cdots,\tilde T_{\alpha_s,k_s,x}^M
		\right)
		\end{align*}
		converges in distribution as $M$ goes to infinity to
		\begin{align}\label{conv_slutsky}
\left(H,  T_{1,k,x},\cdots , T_{s,k,x}\right)
		\end{align}	 
		 where the vector $T_{i,k,x}$ is given by
		 \begin{align*}
		 {\scriptstyle  \left(f\left( x_{K_i+1}\right) ,f\left( x_{K_{i+1}}\right) , U_i,g(x_{K_i+1}),\sum\limits_{\ell=K_i+2}^{K_{i+1}-1}g(x_{\ell }),g(x_{K_{i+1}})
		 	\right) }.
		 \end{align*}

		In particular, given $\phi:\mathbb{R}^{1+6s} \rightarrow \mathbb{R}$ a continuous bounded function one has
		\begin{equation}\label{conv_of_expectation}
		\lim_{M\rightarrow \infty}\mathbb{E}\left(\phi\left(	\tilde H_{x}^M,\tilde T_{\alpha_1,k_1,x}^M,\cdots,\tilde T_{\alpha_s,k_s,x}^M
		\right)\right) = \mathbb{E}\left(\phi\left(H,  T_{1,k,x},\cdots , T_{s,k,x}\right)\right).
		\end{equation}
		We are now ready to prove (\ref{conv_dis_vector}). By conditioning with respect to $ \sigma\left(\left( X_i\right)_{i\in J_s^M}\right)$ and by applying the Freezing Lemma, (\ref{eqinlegge}) and (\ref{conv_of_expectation}), one has
		\begin{align*}
		\mathbb{E}&\left(\phi\left( {H}^M,{T}_{\alpha_1,k_1}^M, \cdots , {T}_{\alpha_s,k_s}^M  \right)  \right)\\
		&\phantom{===}= \int_{\mathbb{R}^{d(k_1+\cdots +k_s+s)}}\mathbb{E}\left(\phi\left( H_{x}^M, T_{\alpha_1,k_1,x}^M,\cdots, T_{\alpha_s,k_s,x}^M\right)\right)  \prod_{i=1}^{K_{s+1}}\eta(dx_{i})\\
		&\phantom{===}= \int_{\mathbb{R}^{d(k_1+\cdots +k_s+s)}}\mathbb{E}\left(\phi\left( \tilde H_{x}^M, \tilde T_{\alpha_1,k_1,x}^M,\cdots, \tilde T_{\alpha_s,k_s,x}^M\right)\right)  \prod_{i=1}^{K_{s+1}}\eta(dx_{i})\underset{M\rightarrow \infty}{\longrightarrow} \mathbb{E}\left(\phi\left(H, {T}_{1,k},\cdots ,{T}_{s,k} \right)\right)
		\end{align*}

		where ${T}_{i,k}, \, i=1,\cdots,s$ is defined in (\ref{Xvector}) and so the proof is complete.
	\end{proof}

	We are now ready to prove Theorem \ref{mainresult} .

	\begin{proof}[Proof of Theorem \ref{mainresult}]
		By (\ref{part2}), it is enough to study the convergence as $M$ goes to infinity of 
		
		\begin{equation} \label{step1}
			\sum\limits_{k=0}^{\left\lceil\frac{\bar{g}}{\underline{g}}\right\rceil} \int_{0}^{1}\mathbb{E}\left(F_{k,\alpha_1}^M \right) d\alpha_1.
		\end{equation} 
		 
		Let us observe that we can replace $\beta_{0}$ and $\beta_{1}$ (introduced in (\ref{beta0}) and (\ref{beta1})) in the definition of $F_{k,\alpha_1}^M$ respectively by the bounded functions $\beta_0\left(0\vee x\wedge1,0\vee y_1 \right) $ and $\beta_1\left(0\vee x\wedge1,0\vee y_1 ,0\vee y_2,0\vee y_3\right) $ that to simplify the notation we will keep calling  $ \beta_{0}$ and $\beta_{1}$.
		
For what has been said there exists a constant $\tilde{C} < \infty$ such that 
		
		\begin{equation}\label{limitato}
		\sup_{(x,y_1)\in \mathbb{R}^{2}}\left|\beta_{0}(x,y_1) \right|\vee\sup_{(x,y_1,y_{2},y_{3})\in \mathbb{R}^{4}}\left|\beta_{1}(x,y_1,y_{2},y_{3}) \right|	 \leq \tilde{C}.
		\end{equation}
		
		
		By Proposition \ref{conv_dist} applied with $s=1$ and by 
		Remark \ref{continuitybeta},  we have that for each $\alpha_1 \in (0,1)$
		\begin{align*}
			F_{0,\alpha_1}^M \overset{d}{\Longrightarrow} F_{0}
		\end{align*}
		with $F_{0}$ defined in (\ref{defpk}). Moreover by the hypothesis of boundedness  of $f$  and (\ref{limitato}) we have
		\begin{align*}
		\lim_{M\rightarrow \infty}&  \mathbb{E}\left(F_{0,\alpha_1}^M \right)=\mathbb{E}\left( F_{0} \right).
		\end{align*}
		
	By Lebesgue's theorem, $\int_{0}^{1}\mathbb{E}\left(F_{0,\alpha_1}^M \right) d\alpha_1$  will converge to the same limit.\\
		Let us now study the convergence of $\sum\limits_{k=1}^{\left\lceil\frac{\bar{g}}{\underline{g}}\right\rceil} \int_{0}^{1}\mathbb{E}\left(F_{k,\alpha_1}^M \right) d\alpha_1$.	
	By Proposition \ref{conv_dist} applied with $s=1$ and by Remark \ref{continuitybeta}, we have that for each $\alpha_1 \in (0,1)$ and $k\geq 1$  
		\begin{align*}
	F_{k,\alpha_1}^M \overset{d}{\Longrightarrow}F_{k}
		\end{align*}
		with $F_{k}$ defined in (\ref{defpk}). Moreover by the hypothesis of boundedness of $f$ and (\ref{limitato}),\\
				\begin{align*}
		\lim_{M\rightarrow \infty}&\mathbb{E}\left(F_{k,\alpha_1}^M\right)=\mathbb{E}\left(F_{k}\right) .
		\end{align*}
		By Lebesgue's theorem, $\lim_{M\rightarrow \infty}\int_{0}^{1}\mathbb{E}\left( F_{k,\alpha_1}^M\right) d\alpha_1 = \mathbb{E}\left( F_{k}\right) $ for each $k= 1,\cdots,\left\lceil\frac{\bar{g}}{\underline{g}}\right\rceil$ and so the proof is complete.
	\end{proof}
We are now ready to provide a central limit theorem for $\frac{1}{M}\sum\limits_{m=1}^{M}f(Y_m^{M})$ knowing its asymptotic variance.
	\section{Central Limit Theorem}\label{secclt}

	\begin{teo}\label{mainresulttcl}
		 Let assume \textbf{I1-2}. The following convergence in distribution holds
		\begin{equation}\label{tclfin}
			\sqrt{M}\left( \frac{1}{M}\sum_{m=1}^{M}f(Y_m^{M})-\eta(fg) \right)\overset{d}{\Longrightarrow} \mathcal{N}\left( 0,\sigma^2_1(f)+\sigma^2_2(f)\right)
		\end{equation}
		where $\sigma^2_1(f)$ and $\sigma^2_2(f)$ are respectively defined in (\ref{sigma1})  and (\ref{sigma2}). 
	\end{teo}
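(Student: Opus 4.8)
The plan is to split the target into an $\mathcal F$-measurable term, which is exactly the object of (\ref{tcl}), and a conditionally centered fluctuation, then to prove a conditional central limit theorem for the fluctuation and establish asymptotic independence of the two pieces. Using (\ref{selection}) one writes
\[
\sqrt M\Big(\tfrac1M\textstyle\sum_{m=1}^M f(Y_m^M)-\eta(fg)\Big)=H^M+R^M,
\]
where $H^M$ is defined in (\ref{tcl1}) and $R^M:=\frac{1}{\sqrt M}\sum_{m=1}^M\big(f(Y_m^M)-\mathbb E(f(Y_m^M)\mid\mathcal F)\big)$. By (\ref{tcl}) we have $H^M\overset{d}{\Longrightarrow}\mathcal N(0,\sigma_1^2(f))$, and by construction $V^M:=\mathrm{Var}(R^M\mid\mathcal F)$ coincides with the conditional variance of Theorem \ref{mainresult}, so $\mathbb E(V^M)\to\sigma_2^2(f)$. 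The goal is to show that, conditionally on $\mathcal F$, $R^M$ behaves like a centered Gaussian with variance $V^M$, and that $V^M$ concentrates at the deterministic value $\sigma_2^2(f)$.

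First I would prove a conditional CLT for $R^M$. Conditionally on $\mathcal F$ the summands are independent, centered, and bounded by $2\sup|f|$ thanks to \textbf{I1}. Expanding the conditional characteristic function as a product and using the third-order Taylor expansion of the exponential together with the elementary estimate $\big|\prod_m z_m-\prod_m w_m\big|\le\sum_m|z_m-w_m|$ for complex numbers of modulus at most $1$, the boundedness forces
\[
\Big|\mathbb E\big(e^{itR^M}\mid\mathcal F\big)-e^{-t^2V^M/2}\Big|\le C(t)\,M^{-1/2}
\]
with a \emph{deterministic} constant, which is the only place \textbf{I1} is genuinely used at this step. Since $H^M$ is $\mathcal F$-measurable, this reduces the characteristic function of $H^M+R^M$ to $\mathbb E\big(e^{itH^M}e^{-t^2V^M/2}\big)$ up to a vanishing error.

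The crux is then to show $V^M\to\sigma_2^2(f)$ in probability: the bounded factor $e^{-t^2V^M/2}$ could then be replaced by the constant $e^{-t^2\sigma_2^2(f)/2}$ inside the expectation, and, combined with $\mathbb E(e^{itH^M})\to e^{-t^2\sigma_1^2(f)/2}$, Lévy's continuity theorem would yield (\ref{tclfin}) with the asymptotic independence of the two limiting Gaussians. As $\mathbb E(V^M)\to\sigma_2^2(f)$ already holds, it suffices to prove $\mathrm{Var}(V^M)\to0$. Writing $V^M$ in the integral form (\ref{part2}) and expanding $\mathbb E\big((V^M)^2\big)$ as a finite sum over $(k,k')$ of double integrals of $\mathbb E\big(F_{k,\alpha_1}^M F_{k',\alpha_1'}^M\big)$, I would invoke Proposition \ref{conv_dist} with $s=2$: for fixed $\alpha_1\neq\alpha_1'$ the local vectors $T_{\alpha_1,k}^M$ and $T_{\alpha_1',k'}^M$ become asymptotically independent, since the two limiting uniforms are independent and the underlying blocks of $X$'s are disjoint. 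By Remark \ref{continuitybeta}, boundedness, and dominated convergence, $\mathbb E\big(F_{k,\alpha_1}^MF_{k',\alpha_1'}^M\big)\to\mathbb E(F_k)\,\mathbb E(F_{k'})$ with $F_k$ as in (\ref{defpk}); integrating over $(\alpha_1,\alpha_1')$ (the diagonal being Lebesgue-null) and summing over $k,k'$ gives $\mathbb E\big((V^M)^2\big)\to\big(\sum_k\mathbb E(F_k)\big)^2=\sigma_2^2(f)^2$, hence $\mathrm{Var}(V^M)\to0$. This concentration of the conditional variance, powered by the $s=2$ decoupling in Proposition \ref{conv_dist}, is the main obstacle; the conditional CLT and the final assembly via characteristic functions are routine once it is established.
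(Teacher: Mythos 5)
Your proposal is correct, and while the first half coincides with the paper's argument, the second half takes a genuinely different and arguably lighter route. Like you, the paper starts from the conditional characteristic function and uses the conditional independence of the $Y_m^M$ given $\mathcal{F}$, a third-order Taylor expansion, and the telescoping bound for products of complex numbers of modulus at most one to replace $\mathbb{E}\bigl(e^{itR^M}\mid\mathcal{F}\bigr)$ by (a truncated series for) $e^{-t^2V^M/2}$ with an error that vanishes deterministically; \textbf{I1} plays exactly the role you assign to it. The divergence is in how $\mathbb{E}\bigl(e^{itH^M}e^{-t^2V^M/2}\bigr)$ is handled. The paper does \emph{not} prove that $V^M$ concentrates: it expands the truncated exponential as $\sum_{s\le M}\frac{1}{s!}\bigl(-\tfrac{t^2}{2}\bigr)^s(V^M)^s$, writes $(V^M)^s$ via (\ref{part2}) as a multiple integral of products $F^M_{k_1,\alpha_1}\cdots F^M_{k_s,\alpha_s}$, and applies Proposition \ref{conv_dist} for \emph{every} $s$, jointly with $H^M$, to identify the limit of each term as $e^{-t^2\sigma_1^2(f)/2}\frac{1}{s!}\bigl(-\tfrac{t^2}{2}\sigma_2^2(f)\bigr)^s$; a domination in $s$ then lets it sum the series. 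Your route instead establishes $\mathrm{Var}(V^M)\to 0$ from the $s=2$ case of Proposition \ref{conv_dist} (off-diagonal decoupling plus a null diagonal), so that $V^M\to\sigma_2^2(f)$ in $L^2$; once $V^M$ converges in probability to a constant, the factor $e^{-t^2V^M/2}$ can be replaced by $e^{-t^2\sigma_2^2(f)/2}$ by dominated convergence and the asymptotic independence from $H^M$ is automatic, so only the marginal convergence (\ref{tcl}) of $H^M$ is needed at the end. What your approach buys is economy: no series over $s$, no domination argument, and no need for the joint convergence of $H^M$ with the local vectors. What the paper's approach buys is that it never isolates the concentration of $V^M$ as a separate statement, proceeding entirely at the level of characteristic functions; but the second-moment computation you propose is essentially the $s=2$ term of the paper's expansion, so nothing is lost. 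Both are complete proofs.
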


\begin{proof}[Proof of Theorem \ref{mainresulttcl}]
Let $u\in\mathbb{R}$. By introducing the notation
\begin{equation} \label{am}
a_m^M := e^{\frac{iu}{\sqrt{M}}\left(f(Y_m^{M})-\mathbb{E}\left(f(Y_m^{M})\mid \mathcal{F} \right) \right)}\,\,for\,\,m=1,\cdots,M
\end{equation}
and by applying (\ref{selection}), we have 
\begin{align} 
	\mathbb{E}\left( e^{\frac{iu}{\sqrt{M}}\sum\limits_{m=1}^{M}\left( f(Y_m^{M})-\eta(fg) \right) } \right) &= \mathbb{E}\left(e^{iu\sqrt{M}\left( \mathbb{E}\left( \frac{1}{M}\sum\limits_{m=1}^{M}f(Y_m^{M})\mathrel{\Big|}\mathcal{F}\right) -\eta(fg)\right) } \mathbb{E}\left(\prod\limits_{m=1}^{M}a_m^M\mathrel{\Big|}\mathcal{F}\right)\right)\nonumber \\
	&= \mathbb{E}\left( e^{iu{H}^M}\mathbb{E}\left(\prod\limits_{m=1}^{M}a_m^M\mathrel{\Big|}\mathcal{F}\right)\right)  \label{char.function}
\end{align}
where ${H}^M$ is defined in (\ref{tcl1}).
The purpose of what follows is therefore to study the convergence as $M$ goes to infinity of  (\ref{char.function}).\\

\textbf{First step}\\
We are first going to study the conditional expectation $\mathbb{E}\left(\prod\limits_{m=1}^{M}a_m^M\mathrel{\Big|}\mathcal{F}\right)$.\\
 By using that for each $u,y \in \mathbb{R}$ $\left|e^{iuy}-1-iuy+\frac{(uy)^2}{2} 	 \right|\leq \frac{\left| uy\right|^3 }{6},$  one has 

\begin{align*}
&\left|a_m^M-1-\frac{iu}{\sqrt{M}}\left(f(Y_m^{M})-\mathbb{E}\left(f(Y_m^{M})\mid \mathcal{F} \right) \right)+\frac{u^2 \left(f(Y_m^{M})-\mathbb{E}\left(f(Y_m^{M})\mid \mathcal{F} \right) \right)^2 }{2M}\right|\\
&\phantom{===}\leq \frac{\left|u\right|^3\left|f(Y_m^{M})-\mathbb{E}\left(f(Y_m^{M})\mid \mathcal{F} \right) \right| ^3 }{6M^{\frac{3}{2}}}\leq  \frac{4\left\| f\right\| _\infty^3\left|u\right|^3 }{3M^{\frac{3}{2}}}.
\end{align*}

Therefore for $m=1,\cdots,M$, $a_m^M$ can be rewritten as
\begin{align*}
a_m^M = 1+\frac{iu}{\sqrt{M}}\left(f(Y_m^{M})-\mathbb{E}\left(f(Y_m^{M})\mid \mathcal{F} \right) \right)-\frac{u^2 \left(f(Y_m^{M})-\mathbb{E}\left(f(Y_m^{M})\mid \mathcal{F} \right) \right)^2 }{2M}+r_m^M 
\end{align*}
with $\left| r_m^M\right| \leq  \frac{4\left\| f\right\| _\infty^3\left|u\right|^3 }{3M^{\frac{3}{2}}}.$\\
 By defining for $m=1,\cdots,M$
 \begin{equation}\label{bm}
 	b_m^M:= 1-\frac{u^2 \left(f(Y_m^{M})-\mathbb{E}\left(f(Y_m^{M})\mid \mathcal{F} \right) \right)^2 }{2M},
 \end{equation} let us now study the difference
\begin{align}\label{difference}
&\left|\mathbb{E}\left(\prod_{m=1}^{M}a_m^M\mathrel{\Big|} \mathcal{F} \right) -\mathbb{E}\left(\prod_{m=1}^{M}b_m^M\mathrel{\Big|} \mathcal{F} \right) \right|.
\end{align} 
By rewriting $\prod\limits_{m=1}^{M}a_m^M - \prod\limits_{m=1}^{M}b_m^M = \sum\limits_{m=1}^{M} \prod\limits_{\ell=1}^{m-1}a_\ell^M\left(a_m^M-b_m^M \right) \prod\limits_{\ell=m+1}^{M}b_\ell^M$, where by convention the empty product is equal to one,  by using that the random vectors $(Y_m^{M})_{1\leq m \leq M}$ are conditionally independent given $\mathcal{F}$ and  that $\left| r_m^M\right| \leq  \frac{4\left\| f\right\| _\infty^3\left|u\right|^3 }{3M^{\frac{3}{2}}}$, $\left|a_\ell^M \right| \leq 1$ and $\left|b_\ell^M \right| \leq 1+\frac{2\left\|f \right\|_\infty^2 u^2}{M}$, (\ref{difference}) becomes

 \begin{align*}
&\left|\mathbb{E}\left(\prod_{m=1}^{M}a_m^M\mathrel{\Big|} \mathcal{F} \right) -\mathbb{E}\left(\prod_{m=1}^{M}b_m^M\mathrel{\Big|} \mathcal{F} \right) \right|\\ &\phantom{=}=\left|\sum\limits_{m=1}^{M}\mathbb{E}\left( \prod\limits_{\ell=1}^{m-1}a_\ell^M\left(\frac{iu}{\sqrt{M}}\left(f(Y_m^{M})-\mathbb{E}\left(f(Y_m^{M})\mid \mathcal{F} \right) \right)+r_m^M \right) \prod\limits_{\ell=m+1}^{M}b_\ell^M \mathrel{\Big|} \mathcal{F} \right)\right|\\
 &\phantom{=}=\left|\sum\limits_{m=1}^{M}\mathbb{E}\left( \prod\limits_{\ell=1}^{m-1}a_\ell^M\mathrel{\Big|} \mathcal{F} \right)\mathbb{E}\left(\frac{iu}{\sqrt{M}}\left(f(Y_m^{M})-\mathbb{E}\left(f(Y_m^{M})\mid \mathcal{F} \right) \right)+r_m^M \mathrel{\Big|} \mathcal{F} \right)\mathbb{E}\left( \prod\limits_{\ell=m+1}^{M}b_\ell^M \mathrel{\Big|} \mathcal{F} \right)\right|\\
 &\phantom{=}=\left|\sum\limits_{m=1}^{M}\mathbb{E}\left( \prod\limits_{\ell=1}^{m-1}a_\ell^M\mathrel{\Big|} \mathcal{F} \right)\mathbb{E}\left(r_m^M \mathrel{\Big|} \mathcal{F} \right)\mathbb{E}\left( \prod\limits_{\ell=m+1}^{M}b_\ell^M \mathrel{\Big|} \mathcal{F} \right)\right| \leq M\times \frac{4\left\| f\right\| _\infty^3\left|u\right|^3 }{3M^{\frac{3}{2}}} \times \left( 1+\frac{2\left\|f \right\|_\infty^2 u^2}{M}\right)^M \underset{M \rightarrow \infty}{\longrightarrow} 0.
 \end{align*}

This implies that 
	\begin{align}
		\limsup_{M\rightarrow \infty}\left|  \mathbb{E}\left( e^{iu{H}^M}\mathbb{E}\left(\prod\limits_{m=1}^{M}a_m^M\mathrel{\Big|}\mathcal{F}\right)\right) - \mathbb{E}\left( e^{iu{H}^M}\mathbb{E}\left(\prod\limits_{m=1}^{M}b_m^M\mathrel{\Big|}\mathcal{F}\right)\right) \right| =0.\label{eqaeb}
	\end{align} 
	\textbf{Second step}\\
	
	By the previous step, it is enough to study the limit behaviour as $M\rightarrow \infty$ of $\mathbb{E}\left( e^{iuH^M}\mathbb{E}\left(\prod\limits_{m=1}^{M}b_m^M\mathrel{\Big|}\mathcal{F}\right)\right)$ where $b_m^M$ is defined in (\ref{bm}) . \\
	By using the general fact that $\prod\limits_{m=1}^{M} (1-c_m)= \sum\limits_{s=0}^{M}(-1)^s\sum\limits_{1\leq m_1<\cdots <m_s\leq M}\prod\limits_{j=1}^{s}c_{m_j}$ for any real sequence $\left( c_m\right)_{1\leq m \leq M} $, we have 

	\begin{align}
		\mathbb{E}\left(\prod\limits_{m=1}^{M}b_m^M\mathrel{\Big|}\mathcal{F}\right)&=\sum\limits_{s=0}^{M}\left( \frac{-u^2}{2M}\right) ^s\sum\limits_{1\leq m_1<\cdots <m_s\leq M} \mathbb{E}\left(\prod\limits_{j=1}^{s} \left(f(Y_{m_j}^{M})-\mathbb{E}\left(f(Y_{m_j}^{M})\mid \mathcal{F} \right) \right)^2 \mathrel{\Big|}\mathcal{F}\right)\nonumber\\
		&=\sum\limits_{s=0}^{M}\left( \frac{-u^2}{2M}\right) ^s\sum\limits_{1\leq m_1<\cdots <m_s\leq M} \prod\limits_{j=1}^{s}\mathbb{E}\left( \left(f(Y_{m_j}^{M})-\mathbb{E}\left(f(Y_{m_j}^{M})\mid \mathcal{F} \right) \right)^2 \mathrel{\Big|}\mathcal{F}\right)\label{distinct}
	\end{align}
	where for the last equality we use that the random vectors $(Y_m^{M})_{1\leq m \leq M}$ are conditionally independent given $\mathcal{F}$. 
	Let us introduce $B_{m_1,\cdots,m_s}:=\prod\limits_{j=1}^{s}\mathbb{E}\left( \left(f(Y_{m_j}^{M})-\mathbb{E}\left(f(Y_{m_j}^{M})\mid \mathcal{F} \right) \right)^2 \mathrel{\Big|}\mathcal{F}\right)$. Since $B_{m_1,\cdots,m_s}$ is symmetric in its indexes and satisfies $\left| B_{m_1,\cdots,m_s}\right|\leq \left( 4\left\| f\right\|_\infty ^{2}\right) ^s $,  for each $s\geq 1$ we have
	\begin{align*}
		\left| \frac{1}{M^s}\left(  \frac{1}{s!}\sum\limits_{ m_1,\cdots ,m_s= 1}^{M} B_{m_1,\cdots,m_s}-\sum\limits_{1\leq m_1<\cdots <m_s\leq M} B_{m_1,\cdots,m_s}\right)\right| &\leq \frac{\left( 4\left\| f\right\|_\infty ^{2}\right) ^s}{M^s}\left( \frac{M^s}{s!}-\binom{M}{s}\right).
	\end{align*}
	It is therefore possible to rewrite (\ref{distinct}) as  
	\begin{align*}
		\mathbb{E}\left(\prod\limits_{m=1}^{M}b_m^M\mathrel{\Big|}\mathcal{F}\right)&=\sum\limits_{s=0}^{M}\left(- \frac{u^2}{2M}\right) ^s\sum\limits_{1\leq m_1<\cdots <m_s\leq M} \prod\limits_{j=1}^{s}\mathbb{E}\left( \left(f(Y_{m_j}^{M})-\mathbb{E}\left(f(Y_{m_j}^{M})\mid \mathcal{F} \right) \right)^2 \mathrel{\Big|}\mathcal{F}\right)\\
		&= \sum\limits_{s=0}^{M}\left( -\frac{u^2}{2M}\right) ^s \frac{1}{s!}\sum\limits_{ m_1,\cdots ,m_s= 1}^{M}\prod\limits_{j=1}^{s}\mathbb{E}\left( \left(f(Y_{m_j}^{M})-\mathbb{E}\left(f(Y_{m_j}^{M})\mid \mathcal{F} \right) \right)^2 \mathrel{\Big|}\mathcal{F}\right)+ \bar{r}^M\\
		&=\sum\limits_{s=0}^{M}\left( -\frac{u^2}{2}\right) ^s \frac{1}{s!}\left(\frac{1}{M} \sum\limits_{ m= 1}^{M}\mathbb{E}\left( \left(f(Y_{m}^{M})-\mathbb{E}\left(f(Y_{m}^{M})\mid \mathcal{F} \right) \right)^2 \mathrel{\Big|}\mathcal{F}\right)\right) ^s+ \bar{r}^M\\
		&= \sum\limits_{s=0}^{M}\left( -\frac{u^2}{2}\right) ^s \frac{1}{s!}\left(  \textrm{Var}\left( \frac{1}{\sqrt{M}}\sum\limits_{m=1}^{M}f(Y_{m}^{M})\mathrel{\Big|}\mathcal{F}\right)\right)^s+ \bar{r}^M
	\end{align*}
	with $\bar{r}^M$ such that 
	\begin{align*}
		\left|\bar{r}^M \right| \leq \sum_{s\geq1} \frac{\left( 2u^2\left\| f\right\|_\infty^2 \right)^s}{s!}  \left(1-\frac{M(M-1)\cdots (M-s+1)}{M^s} \right) \underset{M\rightarrow \infty}{\longrightarrow}0.
	\end{align*}
	 With $\left|e^{iu{H}^M}\right|\leq 1$, (\ref{char.function}) and (\ref{eqaeb}), this implies that
	\begin{align*}
	&\left|  \mathbb{E}\left( e^{\frac{iu}{\sqrt{M}}\sum\limits_{m=1}^{M}\left( f(Y_m^{M})-\eta(fg) \right) } \right) - \mathbb{E}\left( e^{iu{H}^M}\sum\limits_{s=0}^{M}\left( -\frac{u^2}{2}\right) ^s \frac{1}{s!}\left(  \textrm{Var}\left( \frac{1}{\sqrt{M}}\sum\limits_{m=1}^{M}f(Y_{m}^{M})\mathrel{\Big|}\mathcal{F}\right)\right)   ^s\right) \right| \underset{M\rightarrow \infty}{\longrightarrow} 0.
	\end{align*}
	
\subsubsection*{Third step}
	The purpose of what follows is therefore to study the convergence as $M$ goes to infinity of 
	
	\begin{equation}\label{newpurpose}
		\mathbb{E}\left( e^{iu{H}^M}\sum\limits_{s=0}^{M}\left( -\frac{u^2}{2}\right) ^s \frac{1}{s!}\left(  \textrm{Var}\left( \frac{1}{\sqrt{M}}\sum\limits_{m=1}^{M}f(Y_{m}^{M})\mathrel{\Big|}\mathcal{F}\right)\right)^s\right).
	\end{equation}

	By (\ref{part2}), (\ref{newpurpose}) can be rewritten as
	
	 \begin{align}
	 	\mathbb{E}&\left(e^{iu{H}^M}\right) +\sum\limits_{s=1}^{M}\left( -\frac{u^2}{2}\right)^s \frac{1}{s!}\sum\limits_{ k_1,\cdots,k_s= 0}^{\left\lceil\frac{\bar{g}}{\underline{g}}\right\rceil} \int_{\left[0,1 \right]^s }\mathbb{E}\left(e^{iu{H}^M}F_{k_1,\alpha_1}^M\cdots F_{k_s,\alpha_s}^M \right) d\alpha_1\cdots d\alpha_s \nonumber\\
	 	&=\mathbb{E}\left(e^{iu{H}^M}\right) + \sum\limits_{s\geq 1} 1_{\left\lbrace s\leq M \right\rbrace}z(s,M) \label{conv}
	 \end{align}
	 where for $s\geq 1$ and $M\geq 1$
	
	\begin{align*}
	z(s,M)=	\left( -\frac{u^2}{2}\right)^s \sum\limits_{ k_1,\cdots,k_s= 0}^{\left\lceil\frac{\bar{g}}{\underline{g}}\right\rceil} \int_{\left[0,1 \right]^s }1_{\left\lbrace 0<\alpha_1< \cdots <\alpha_s < 1\right\rbrace }\mathbb{E}\left(e^{iu{H}^M}F^M_{k_1,\alpha_1}\cdots F^M_{k_s,\alpha_s} \right) d\alpha_1\cdots d\alpha_s
	\end{align*} 
and where we recall that for  $i=1, \cdots, s$ 
 
 $$ F^M_{k_i,\alpha_i} = \begin{cases}
 f^2(X_{\lceil \alpha_i M \rceil})\beta_{0}(u_{\lceil \alpha_i M \rceil-1}^M,w_{\lceil \alpha_i M \rceil}^M) &\,k_i=0\\
 -1_{\left\lbrace \lceil \alpha_i M \rceil\leq M-k_i\right\rbrace}f( X_{\lceil \alpha_i M \rceil})f( X_{\lceil \alpha_i M \rceil+k_i})\beta_{1}(u_{\lceil \alpha_i M \rceil-1}^M,w_{\lceil \alpha_i M \rceil}^M,\sum\limits_{\ell=2}^{k_i}w_{\lceil \alpha_i M \rceil+\ell-1}^M,w_{\lceil \alpha_i M \rceil+k_i}^M) &\, k_i>0.
 \end{cases} $$ 
 Let us now study the convergence as $M\rightarrow \infty$ of (\ref{conv}): $\lim\limits_{M\rightarrow \infty}\mathbb{E}\left(e^{iu{H}^M}\right)=e^{-\frac{u^2\sigma_1^2(f)}{2}}$ by (\ref{tcl})  and to study the convergence of the sum we will apply Lebesgue's theorem.\\
Given $s\geq 1$, we first compute the pointwise convergence as $M \rightarrow \infty$ of $z(s,M)$ and then we show the existence of a function $\tilde{z}$ independent of $M$ that dominates $\left| z\right| $  and such that  $\sum\limits_{ s\geq 1}\tilde{z}(s)< \infty$.
 
Let us start by computing the pointwise convergence. Let $s\geq 1$ and $ 0\leq  k_1,k_2,\cdots,k_s\leq \left\lceil\frac{\bar{g}}{\underline{g}}\right\rceil $.\\
By applying the reasoning based on Proposition \ref{conv_dist} done in the proof of Theorem \ref{mainresult}, $\forall\,\, 0<\alpha_{1}<\cdots < \alpha_{s}<1$ we check that
\begin{align*}
	e^{iu{H}^M}F^M_{k_1,\alpha_1}\cdots F^M_{k_s,\alpha_s}\overset{d}{\Longrightarrow}e^{iuH} F_{1,k}\cdots F_{s,k}
\end{align*}
where, recalling that  $k$ denotes the multiindex $\left(k_1,\cdots,k_s \right)$ and $K_i:=k_1+\cdots+k_{i-1}+i-1$ $\forall i=1,\cdots,s$ with the convention $K_1:=0$, $F_{i,k} $ for $i=1,\cdots,s$ is given by
\begin{align*} 
F_{i,k} = \begin{cases}
f^2(X_{K_i+1})\beta_{0}\left( U_i,g(X_{K_i+1})\right) &\,k_i=0\\
-f( X_{K_i+1})f( X_ {K_{i+1}})\beta_{1}\left( U_i,g(X_{K_i+1}),\sum\limits_{\ell=K_i+2}^{K_{i+1}-1}g(X_{\ell}),g(X_{K_{i+1}})\right) &\, k_i>0
\end{cases}
\end{align*} 

and $H \sim \mathcal{N}\left(0,\sigma_1^2(f) \right)$ independent of $\left( U_j\right)_{1\leq j \leq s}$ and  $\left( X_{j}\right)_{1\leq j\leq K_{s+1}}$. \\
Moreover by the hypothesis of boundedness  of $f$, (\ref{limitato}) and the fact that $\left| e^{iu{H}^M}\right| \leq 1$,
\begin{align*}
\lim_{M\rightarrow \infty}& \mathbb{E}\left(e^{iu{H}^M}F^M_{k_1,\alpha_1}\cdots F^M_{k_s,\alpha_s} \right)  = \mathbb{E}\left(e^{iuH} F_{1,k}\cdots F_{s,k}\right) .
\end{align*}

By Lebesgue's theorem and by observing that $F_{1,k},\cdots,F_{s,k}$ are independent, respectively distributed as $F_{k_1},\cdots,F_{k_s}$ and independent of $H$, $z(s,M)$ converges to 
 
\begin{align*} 
& \left( -\frac{u^2}{2}\right)^s \sum\limits_{ k_1,\cdots,k_s= 0}^{\left\lceil\frac{\bar{g}}{\underline{g}}\right\rceil} \int_{\left[0,1 \right]^s }1_{\left\lbrace 0<\alpha_1< \cdots <\alpha_s < 1\right\rbrace }\mathbb{E}\left(e^{iuH} F_{1,k}\cdots F_{s,k}\right) d\alpha_1\cdots d\alpha_s \\
 &\phantom{====}= e^{-\frac{u^2\sigma_1^2(f)}{2}} \frac{1}{s! }\left( -\frac{u^2}{2}\right)^s\sum\limits_{ k_1,\cdots,k_s= 0}^{\left\lceil\frac{\bar{g}}{\underline{g}}\right\rceil} \mathbb{E}\left(F_{k_1}\right)\cdots \mathbb{E}\left(F_{k_s}\right)\\
 &\phantom{====}=  e^{-\frac{u^2\sigma_1^2(f)}{2}} \frac{1}{s!}\left( -\frac{u^2}{2}\right)^s \left( \sum_{k_1=0}^{\left\lceil\frac{\bar{g}}{\underline{g}}\right\rceil} \mathbb{E}\left(F_{k_1}\right)\right)^s = e^{-\frac{u^2\sigma_1^2(f)}{2}} \frac{1}{s! }\left( -\frac{u^2}{2}\right)^s \left( \sigma_2^2(f)\right)^s.
\end{align*} 
 
Let us now show that $z$ is dominated by a summable function $\tilde{z}$. By using that $f$, $\beta_{0}$ and $\beta_{1}$ are bounded functions, for $s\geq1$ one has 
 
 \begin{align*}
 	\left|z(s,M)\right| &= \left|\left( -\frac{u^2}{2}\right)^s \sum\limits_{ k_1,\cdots,k_s= 0}^{\left\lceil\frac{\bar{g}}{\underline{g}}\right\rceil} \int_{\left[0,1 \right]^s }1_{\left\lbrace 0<\alpha_1< \cdots <\alpha_s < 1\right\rbrace }\mathbb{E}\left(e^{iuH} F_{1,k}\cdots F_{s,k}\right) d\alpha_1\cdots d\alpha_s \right| \\
 	&\leq C\frac{1}{s!} \left(\frac{u^2\left( \left\lceil\frac{\bar{g}}{\underline{g}}\right\rceil+1\right) }{2}\right)^s
 \end{align*}
 for a given finite constant $C$.
 It is now sufficient to observe that $\sum\limits_{ s\geq 1}\frac{1}{s!}\left(\left( \left\lceil\frac{\bar{g}}{\underline{g}}\right\rceil+1\right)  \times\frac{u^2}{2}\right)^s = e^{\left( \left\lceil\frac{\bar{g}}{\underline{g}}\right\rceil +1\right) \frac{u^2}{2}}<\infty.$
 In conclusion we have proved that (\ref{conv}) converges as $M\rightarrow \infty$ to 
\begin{align*}
	e^{-\frac{u^2\sigma_1^2(f)}{2}}\sum_{s\geq 0} \frac{1}{s! }\left( -\frac{u^2}{2}\right)^s \left( \sigma_2^2(f)\right)^s=e^{-\frac{u^2\left( \sigma_1^2(f)+\sigma_2^2(f)\right) }{2}}.
\end{align*}

\end{proof}

\section{Proof of Proposition \ref{rewriting_term} }\label{sectionfinale}
We recall that for $i=1,\cdots,M$ we denote $u_{i}^M:= \left\lbrace w_1^M+\cdots+w_{i}^M \right\rbrace$ and $\mu_{i}^M:= \lfloor w_1^M+\cdots+w_{i}^M \rfloor+1,$  where by convention  $u_{0}^M=0$, $\mu_{0}^M=1$. The following technical result holds.
\begin{lemma}\label{ausiliary_remark}
	
If $1\leq p<\ell\leq M$ then

	\begin{enumerate}[ref=\thelemma(\arabic*)]
		\item $\mu_p^M<\mu_{\ell}^M$ if and only if $w_{p+1}^M+\cdots+w_{\ell}^M \geq 1-u_p^M$ \label{sublemmaOne}
		\item $\mu_p^M=\mu_{\ell}^M$ if and only if $w_{p+1}^M+\cdots+w_{\ell}^M <1-u_p^M$ \label{sublemmaDue}
		\item  $\mu_p^M=\mu_{\ell}^M$ if and only if $u_{\ell}^M=u_p^M+w_{p+1}^M+\cdots+w_{\ell}^M.$
	\end{enumerate}
\end{lemma}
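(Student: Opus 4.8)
The plan is to reduce all three equivalences to a single clean identity relating the integer parts of the partial sums. First I would introduce the partial sums $S_i := w_1^M+\cdots+w_i^M$ with $S_0=0$, so that by definition $u_i^M=\{S_i\}=S_i-\lfloor S_i\rfloor$ and $\mu_i^M=\lfloor S_i\rfloor+1$. Since each $w_j^M=Mg(X_j)/\sum_\ell g(X_\ell)>0$, the sequence $(S_i)$ is strictly increasing, hence $(\lfloor S_i\rfloor)$ and $(\mu_i^M)$ are nondecreasing; in particular $\mu_p^M\leq\mu_\ell^M$ whenever $p<\ell$, so the two cases in (1) and (2) are exhaustive and mutually exclusive, and it suffices to prove one implication in each.

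Writing $\Delta:=w_{p+1}^M+\cdots+w_\ell^M=S_\ell-S_p>0$ and using the decomposition $S_p=(\mu_p^M-1)+u_p^M$, where $\mu_p^M-1\in\mathbb{Z}$ and $u_p^M\in[0,1)$, I would then compute
\begin{align*}
\lfloor S_\ell\rfloor=\lfloor(\mu_p^M-1)+u_p^M+\Delta\rfloor=(\mu_p^M-1)+\lfloor u_p^M+\Delta\rfloor,
\end{align*}
since the first summand is an integer. This yields the key identity $\mu_\ell^M=\mu_p^M+\lfloor u_p^M+\Delta\rfloor$, which drives everything. Because $u_p^M\geq 0$ and $\Delta>0$ force $u_p^M+\Delta>0$, the floor $\lfloor u_p^M+\Delta\rfloor$ equals $0$ exactly when $u_p^M+\Delta<1$, i.e. $\Delta<1-u_p^M$, and is $\geq 1$ exactly when $\Delta\geq 1-u_p^M$. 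Substituting into the identity gives statement (2) (equality of the $\mu$'s) and statement (1) (strict inequality) immediately.

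For statement (3), I would compute the fractional part of $S_\ell$ from the same decomposition:
\begin{align*}
u_\ell^M=S_\ell-\lfloor S_\ell\rfloor=(u_p^M+\Delta)-\lfloor u_p^M+\Delta\rfloor=\{u_p^M+\Delta\}.
\end{align*}
Thus $u_\ell^M=u_p^M+\Delta$ holds if and only if $\{u_p^M+\Delta\}=u_p^M+\Delta$, i.e. if and only if $\lfloor u_p^M+\Delta\rfloor=0$, which by the analysis above is precisely the condition $\mu_p^M=\mu_\ell^M$ of (2). This closes the chain of equivalences.

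There is no serious obstacle here; the whole argument is elementary floor/fractional-part bookkeeping. The only point that requires a little care is to record that $u_p^M+\Delta$ is automatically positive, so that $\lfloor u_p^M+\Delta\rfloor=0$ is equivalent to $u_p^M+\Delta<1$ with no competing lower-bound constraint, and to note the monotonicity of $(\mu_i^M)$ at the outset so that the equivalences in (1) and (2) can be obtained by verifying just one direction of each.
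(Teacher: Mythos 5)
Your proof is correct and uses essentially the same elementary floor/fractional-part bookkeeping as the paper, which likewise starts from the decomposition $S_p=\mu_p^M-1+u_p^M$; you merely package the computation into the single identity $\mu_\ell^M=\mu_p^M+\lfloor u_p^M+\Delta\rfloor$, whereas the paper proves item (1) by two direct implications and observes that (2) and (3) follow. The unified identity is a slightly tidier organization but not a different argument.
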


\begin{proof}[Proof of Lemma \ref{ausiliary_remark}]
	Let us observe that 2. follows directly from 1. and 3. is a direct consequence of the definition of the integer and fractional part. Therefore it is enough to prove 1.\\
	If $\mu_p^M<\mu_{\ell}^M$, then\\
	$$w_1^M+\cdots+w_{p}^M-u_p^M+1=\mu_p^M\leq\mu_{\ell}^M-1\leq w_1^M+\cdots+w_{\ell}^M$$ 
	that implies
	$$1-u_p^M \leq w_{p+1}^M+\cdots+w_{\ell}^M.$$
	Let us now prove the other implication. If $1-u_p^M \leq w_{p+1}^M+\cdots+w_{\ell}^M$, then 
	$$\mu_{p}^M=w_{1}^M+\cdots+w_{p}^M+1-u_p^M \leq w_{1}^M+\cdots+w_{\ell}^M < \mu_{\ell}^M.$$
\end{proof}
 The following lemma provides an explicit expression for the conditional expectation $\mathbb{E}\left(f( Y_m^{M}) \mid \mathcal{F} \right)$ for $m=1,\cdots,M$ which appears in (\ref{toberewritten2}) and so it allows to prove Proposition \ref{rewriting_term}.

\begin{lemma}\label{cond_expectation} 
	Given  $\varrho\in \mathcal{B}_b\left( \mathbb{R}^d\right)$,
	for $m=1,\cdots,M$ we have
	$$\mathbb{E}\left(\varrho( Y_m^{M}) \mid \mathcal{F} \right)= \sum_{i=1}^{M}\varrho( X_{i})  q_{m,i}^M 1_{\left\lbrace\mu_{i-1}^M\leq m \leq \mu_{i}^M \right\rbrace }$$ 
	
	where for $i=1,\cdots,M$
	
		\begin{equation}\label{coeff}
	q_{m,i}^M = 1_{\left\lbrace \mu_{i-1}^M<\mu_{i}^M\right\rbrace }(1_{\left\lbrace \mu_{i-1}^M<m<\mu_{i}^M\right\rbrace }+1_{\left\lbrace m=\mu_{i-1}^M\right\rbrace }(1-u_{i-1}^M)+1_{\left\lbrace m=\mu_{i}^M\right\rbrace }u_i^M)+1_{\left\lbrace \mu_{i-1}^M=m=\mu_{i}^M\right\rbrace }w_i^M.
	\end{equation}
\end{lemma}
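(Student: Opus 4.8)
The plan is to condition on $\mathcal{F}$, under which all the weights $w_j^M$ and their partial sums $S_\ell := \sum_{j=1}^{\ell} w_j^M$ (with $S_0 = 0$) are deterministic while, by independence, $U_m$ stays uniform on $(0,1)$. From the defining formula (\ref{stratsampling}), the events $\{S_{\ell-1} < m - U_m \leq S_\ell\}$, $\ell = 1, \ldots, M$, partition the sample space because the half-open intervals $(S_{\ell-1}, S_\ell]$ tile $(0,M]$ and $m - U_m \in (m-1,m) \subset (0, M]$. Hence
\[
\mathbb{E}\left(\varrho(Y_m^M) \mid \mathcal{F}\right) = \sum_{i=1}^{M} \varrho(X_i)\, \mathbb{P}\left(S_{i-1} < m - U_m \leq S_i \mid \mathcal{F}\right),
\]
and since $m - U_m$ is uniform on $(m-1,m)$ and independent of $\mathcal{F}$, the $i$-th conditional probability is exactly the Lebesgue measure of $(S_{i-1}, S_i] \cap (m-1, m)$. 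It therefore suffices to establish the deterministic pointwise identity $\left| (S_{i-1}, S_i] \cap (m-1,m) \right| = q_{m,i}^M\, 1_{\{\mu_{i-1}^M \leq m \leq \mu_i^M\}}$ for all $i,m \in \{1,\ldots,M\}$.

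First I would rewrite the endpoints through the quantities entering $q_{m,i}^M$. By definition $S_i = (\mu_i^M - 1) + u_i^M$ with $u_i^M \in [0,1)$, so $\lfloor S_i \rfloor = \mu_i^M - 1$, and likewise for $S_{i-1}$; the conventions $u_0^M = 0$, $\mu_0^M = 1$ keep this valid at $i = 1$. As $m$ is an integer, $\mu_{i-1}^M \leq m \Leftrightarrow S_{i-1} < m$ and $m \leq \mu_i^M \Leftrightarrow m-1 \leq S_i$. Thus $1_{\{\mu_{i-1}^M \leq m \leq \mu_i^M\}}$ is the indicator of $\{S_{i-1} < m\} \cap \{m-1 \leq S_i\}$, a set containing the region where the intersection has positive length; on its boundary piece $\{S_i = m-1\}$ one checks directly that $q_{m,i}^M = 0$, so the claimed product still reproduces the (vanishing) length. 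It then remains to evaluate the length $\min(S_i,m) - \max(S_{i-1}, m-1)$ when $\mu_{i-1}^M \leq m \leq \mu_i^M$.

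The heart of the argument is the ensuing case analysis according to the position of $m$ between $\mu_{i-1}^M$ and $\mu_i^M$, where Lemma~\ref{ausiliary_remark} is used to match the alternatives $\mu_{i-1}^M < \mu_i^M$ and $\mu_{i-1}^M = \mu_i^M$ with the corresponding constraints on $w_i^M$:
\begin{itemize}
\item if $\mu_{i-1}^M < m < \mu_i^M$, then $S_{i-1} < m-1$ and $S_i \geq m$, the whole of $(m-1,m)$ is covered, and the length is $1$;
\item if $m = \mu_{i-1}^M < \mu_i^M$, then $\max(S_{i-1}, m-1) = S_{i-1} = (m-1) + u_{i-1}^M$ and $\min(S_i, m) = m$, giving length $1 - u_{i-1}^M$;
\item if $\mu_{i-1}^M < m = \mu_i^M$, then $\min(S_i, m) = S_i = (m-1) + u_i^M$ and $\max(S_{i-1}, m-1) = m-1$, giving length $u_i^M$;
\item if $\mu_{i-1}^M = m = \mu_i^M$, then both endpoints lie in $[m-1,m)$ and, by the third assertion of Lemma~\ref{ausiliary_remark}, the length is $u_i^M - u_{i-1}^M = w_i^M$.
\end{itemize}
These four values coincide with the four summands in the definition (\ref{coeff}) of $q_{m,i}^M$, which proves the identity and hence the lemma.

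The main obstacle is the boundary bookkeeping in this last step rather than any deep idea. One must track the cases in which a partial sum $S_i$ is itself an integer (so $u_i^M = 0$) and verify that replacing the half-open interval $(S_{i-1}, S_i]$ by its open or closed version never alters the computed length, since isolated points carry no Lebesgue mass. One must also allow $w_i^M > 1$, in which case $(S_{i-1}, S_i]$ straddles several unit intervals and the first case above applies to each interior integer $m$; summing the four contributions over all admissible $m$ then returns the full mass $w_i^M$, in agreement with the consistency relation (\ref{selection}).
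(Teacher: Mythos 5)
Your proof is correct and follows essentially the same route as the paper: both condition on $\mathcal{F}$, use that $m-U_m$ is uniform on $(m-1,m)$ (the Freezing Lemma) to reduce the coefficient to the Lebesgue measure of $(S_{i-1},S_i]\cap(m-1,m)$, and evaluate it by the same four-case analysis on the position of $m$ relative to $\mu_{i-1}^M$ and $\mu_i^M$. Your extra remarks on the boundary case $S_i=m-1$ and on $w_i^M>1$ are sound but not needed beyond what the paper's integral computation already covers.
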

\begin{proof}[Proof of Lemma \ref{cond_expectation}]
For $m=1,\cdots,M$,	by (\ref{stratsampling}), by observing that $m-U_m$ is uniformly distributed on $\left[m-1,m \right] $ and  by the Freezing Lemma, we have
	\begin{align}
	\mathbb{E}\left(\varrho(Y_m^{M})\mid \mathcal{F} \right)&=\sum_{i=1}^{M}\varrho(X_{i}) \int_{m-1}^{m}1_{\left\lbrace \sum\limits_{j=1}^{i-1}w_j^M<u\leq \sum\limits_{j=1}^{i}w_j^M\right\rbrace}du. \label{integral}
	\end{align}

	By observing that for $i=1,\cdots,M$ 
	
	$$1=1_{\left\lbrace  m< \mu_{i-1}^M \right\rbrace }+1_{ \left\lbrace m> \mu_{i}^M \right\rbrace  }+1_{\left\lbrace \mu_{i-1}^M<\mu_{i}^M\right\rbrace }\left( 1_{ \left\lbrace m= \mu_{i-1}^M \right\rbrace } +1_{\left\lbrace m= \mu_{i}^M \right\rbrace } + 1_{\left\lbrace \mu_{i-1}^M <m< \mu_{i}^M \right\rbrace } \right)  +1_{ \left\lbrace m=\mu_{i-1}^M=\mu_{i}^M\right\rbrace },$$
	
	let us study the value of the integral in (\ref{integral}) according to this partition.
	If $m < \mu_{i-1}^M$ then $m\leq \mu_{i-1}^M-1\leq w_1^M+\cdots+w_{i-1}^M$ and if $m > \mu_{i}^M$ then $m\geq  \mu_{i}^M+1> w_1^M+\cdots+w_i^M+1$ and so in both cases the integral is zero. Let us now suppose that $\mu_{i-1}^M<\mu_{i}^M$.
	If $\mu_{i-1}^M<m<\mu_{i}^M$, then $m\geq \mu_{i-1}^M+1> w_1^M+\cdots+w_{i-1}^M+1$ and $m\leq w_1^M+\cdots+w_{i}^M$. Therefore
	
	$$ \int_{m-1}^{m}1_{\left\lbrace \sum\limits_{j=1}^{i-1}w_j^M<u\leq \sum\limits_{j=1}^{i}w_j^M\right\rbrace}du=1.$$
	If $m=\mu_{i-1}^M$:
	$$ \int_{m-1}^{m}1_{\left\lbrace \sum\limits_{j=1}^{i-1}w_j^M<u\leq \sum\limits_{j=1}^{i}w_j^M\right\rbrace}du=\int_{w_1^M+\cdots+w_{i-1}^M}^{\mu_{i-1}^M}du =1-u_{i-1}^M.$$
	If  $m=\mu_{i}^M$:
	$$ \int_{m-1}^{m}1_{\left\lbrace \sum\limits_{j=1}^{i-1}w_j^M<u\leq \sum\limits_{j=1}^{i}w_j^M\right\rbrace}du=\int_{\mu_{i}^M-1}^{w_1^M+\cdots+w_{i}^M}du =u_i^M. $$
	Finally if $m=\mu_{i}^M=\mu_{i-1}^M$:
	$$ \int_{m-1}^{m}1_{\left\lbrace \sum\limits_{j=1}^{i-1}w_j^M<u\leq \sum\limits_{j=1}^{i}w_j^M\right\rbrace}du=\int_{w_1^M+\cdots+w_{i-1}^M}^{w_1^M+\cdots+w_{i}^M}du = w_i^M.$$
	To sum up we have obtained that for $i=1,\cdots, M$
	\begin{align*}
	\int_{m-1}^{m}&1_{\left\lbrace \sum\limits_{j=1}^{i-1}w_j^M<u\leq \sum\limits_{j=1}^{i}w_j^M\right\rbrace}du\\
	&= 1_{\left\lbrace \mu_{i-1}^M<\mu_{i}^M\right\rbrace }(1_{\left\lbrace \mu_{i-1}^M<m<\mu_{i}^M\right\rbrace }+1_{\left\lbrace m=\mu_{i-1}^M\right\rbrace }(1-u_{i-1}^M)+1_{\left\lbrace m=\mu_{i}^M\right\rbrace }u_i^M)+1_{\left\lbrace \mu_{i-1}^M=m=\mu_{i}^M\right\rbrace }w_i^M.\\
	\end{align*}
\end{proof}
\begin{proof}[Proof of Proposition \ref{rewriting_term}]
	
	Let us rewrite   (\ref{toberewritten2}).
	By Lemma \ref{cond_expectation} we have
	\begin{align*}
	\dfrac{1}{M}& \sum_{m=1}^{M} \mathbb{E}\left(f^2(Y_m^{M}) |\mathcal{F}\right) -\frac{1}{M} \sum_{m=1}^{M} \mathbb{E}\left(f(Y_m^{M}) |\mathcal{F}\right)^2 \\
	&=\dfrac{1}{M} \sum_{m=1}^{M}\sum_{i=1}^{M}f^2( X_{i}) q_{m,i}^M 1_{\left\lbrace\mu_{i-1}^M\leq m \leq \mu_{i}^M \right\rbrace }-\dfrac{1}{M} \sum_{m=1}^{M}\sum_{i=1}^{M}f^2( X_{i}) (q_{m,i}^M)^2 1_{\left\lbrace\mu_{i-1}^M\leq m \leq \mu_{i}^M \right\rbrace }\\
	&\phantom{==}-\dfrac{2}{M}\sum_{m=1}^{M}\sum_{1\leq i< j\leq M}^{}f( X_{i})f( X_{j}) q_{m,i}^M q_{m,j}^M1_{\left\lbrace\mu_{i-1}^M\leq m \leq \mu_{i}^M \right\rbrace } 1_{\left\lbrace\mu_{j-1}^M\leq m \leq \mu_{j}^M \right\rbrace }\\ 
	&=\dfrac{1}{M} \sum_{m=1}^{M}\sum_{i=1}^{M}f^2( X_{i}) \left( q_{m,i}^M-(q_{m,i}^M)^2\right)  1_{\left\lbrace\mu_{i-1}^M\leq m \leq \mu_{i}^M \right\rbrace }-\dfrac{2}{M}\sum_{m=1}^{M}\sum_{1\leq i< j\leq M}^{}f( X_{i})f( X_{j}) q_{m,i}^M q_{m,j}^M1_{\left\lbrace m = \mu_{i}^M = \mu_{j-1}^M\right\rbrace}.\numberthis \label{eqn}
	\end{align*}
	
	We are now going first to rewrite the first component of the right-hand side and then the second one.
	\subsubsection*{First term:} 
	Let us observe that by Lemma \ref{ausiliary_remark} applied to the couple $\left( p,l\right) $ equal to  $(i-1,i)$, we have  $1_{\left\lbrace \mu_{i}^M > \mu_{i-1}^M \right\rbrace}=1_{\left\lbrace w_i^M\geq 1-u_{i-1}^M  \right\rbrace }$ and $1_{\left\lbrace \mu_{i}^M = \mu_{i-1}^M \right\rbrace}=1_{\left\lbrace w_i^M< 1-u_{i-1}^M  \right\rbrace }$ for $i=1\cdots,M$. 
	Therefore by Lemma \ref{cond_expectation} and by using that $\mu_{i}^M-\mu_{i-1}^M =u_{i-1}^M-u_{i}^M+w_i^M  $, we have
	\begin{align*}
	&\sum_{m=1}^{M} q_{m,i}^M 1_{\left\lbrace\mu_{i-1}^M\leq m \leq \mu_{i}^M \right\rbrace } \\
	&= \sum_{m=1}^{M}\left( 1_{\left\lbrace w_i^M\geq 1-u_{i-1}^M  \right\rbrace }(1_{\left\lbrace \mu_{i-1}^M<m<\mu_{i}^M\right\rbrace }+1_{\left\lbrace m=\mu_{i-1}^M\right\rbrace }(1-u_{i-1}^M)+1_{\left\lbrace m=\mu_{i}^M\right\rbrace }u_i^M)+1_{\left\lbrace w_i^M< 1-u_{i-1}^M  \right\rbrace }1_{\left\lbrace \mu_{i-1}^M=m=\mu_{i}^M\right\rbrace }w_i^M\right) \\
	&= 1_{\left\lbrace w_i^M\geq 1-u_{i-1}^M  \right\rbrace }\left( \mu_{i}^M-\mu_{i-1}^M-u_{i-1}^M+u_i^M\right)+ 1_{\left\lbrace w_i^M< 1-u_{i-1}^M  \right\rbrace }w_i^M = w_i^M1_{\left\lbrace  w_i^M  \geq 1-u_{i-1}^M \right\rbrace}+w_i^M1_{\left\lbrace w_i^M  < 1-u_{i-1}^M  \right\rbrace} = w_i^M.
	\end{align*}
Let us observe that since, by (\ref{selection}), $\frac{1}{M}\sum\limits_{m=1}^{M} \mathbb{E}\left(f^2(Y_m^{M}) |\mathcal{F}\right) =\frac{1}{M}\sum\limits_{i=1}^{M} f^2( X_{i})w_i^M$, we got the  expected result. Similarly, by using that $\mu_{i}^M-\mu_{i-1}^M =u_{i-1}^M-u_{i}^M+w_i^M  $, we have
	\begin{align*}
	\sum_{m=1}^{M} &(q_{m,i}^M)^2 1_{\left\lbrace\mu_{i-1}^M\leq m \leq \mu_{i}^M \right\rbrace }\\
	&= 1_{\left\lbrace w_i^M\geq 1-u_{i-1}^M  \right\rbrace }(\mu_{i}^M-\mu_{i-1}^M-1+(1-u_{i-1}^M)^2+(u_i^M)^2)+1_{\left\lbrace w_i^M< 1-u_{i-1}^M  \right\rbrace }(w_i^M)^2 \\
	&=(w_i^M-u_i^M(1-u_i^M)-u_{i-1}^M(1-u_{i-1}^M))1_{\left\lbrace w_i^M\geq 1-u_{i-1}^M \right\rbrace}+(w_i^M)^21_{\left\lbrace w_i^M < 1-u_{i-1}^M \right\rbrace}\\
	&= w_i^M-u_i^M(1-u_i^M)-u_{i-1}^M(1-u_{i-1}^M) - (w_i^M(1-w_i^M)-u_i^M(1-u_i^M)-u_{i-1}^M(1-u_{i-1}^M))1_{\left\lbrace w_i^M < 1-u_{i-1}^M \right\rbrace}.
	\end{align*}
	Substrating the two quantities and using Lemma \ref{ausiliary_remark} to rewrite $u_i^M$ as $ w_i^M+u_{i-1}^M$ if $w_i^M <1-u_{i-1}^M$ , we have
	\begin{align*}
	&\sum_{m=1}^{M} q_{m,i}^M 1_{\left\lbrace\mu_{i-1}^M\leq m \leq \mu_{i}^M \right\rbrace }-	\sum_{m=1}^{M} (q_{m,i}^M)^2 1_{\left\lbrace\mu_{i-1}^M\leq m \leq \mu_{i}^M \right\rbrace}\\
	&=u_i^M(1-u_i^M)+u_{i-1}^M(1-u_{i-1}^M)+(w_i^M(1-w_i^M)-u_i^M(1-u_i^M)-u_{i-1}^M(1-u_{i-1}^M))1_{\left\lbrace w_i^M <1-u_{i-1}^M \right\rbrace}\\
	&=u_i^M(1-u_i^M)+u_{i-1}^M(1-u_{i-1}^M)+(w_i^M(1-w_i^M)-(w_i^M+u_{i-1}^M)(1-w_i^M-u_{i-1}^M)-u_{i-1}^M(1-u_{i-1}^M))1_{\left\lbrace  w_i^M <1-u_{i-1}^M \right\rbrace}\\
	&=u_i^M(1-u_i^M)+u_{i-1}^M(1-u_{i-1}^M)-(2u_{i-1}^M(1-u_{i-1}^M)-2w_i^Mu_{i-1}^M)1_{\left\lbrace  w_i^M <1-u_{i-1}^M \right\rbrace}.
	\end{align*}
	
	Therefore by observing that for $i=1\cdots,M$ $ u_i^M =\left\lbrace u_{i-1}^M + w_i^M \right\rbrace $,  the first term can be rewritten as
	\begin{align}\label{firstterm}
	\dfrac{1}{M}& \sum_{m=1}^{M}\sum_{i=1}^{M}f^2(X_{i}) \left( q_{m,i}^M-(q_{m,i}^M)^2\right) 1_{\left\lbrace\mu_{i-1}^M\leq m \leq \mu_{i}^M \right\rbrace}  = \dfrac{1}{M}\sum_{i=1}^{M}f^2( X_{i})\beta_{0}(u_{i-1}^M,w_i^M)
	\end{align}
	with $\beta_{0}$ defined in (\ref{beta0}).
	\subsubsection*{Second term:} 
	By Lemma \ref{cond_expectation} and by rewriting $1_{\left\lbrace \mu_{i-1}^M<\mu_{i}^M\right\rbrace }=1-1_{\left\lbrace \omega_{i}^M<1-u_{i-1}^M\right\rbrace }$, we can rewrite the right-hand side of (\ref{eqn}) in the following way 
	\begin{align*} 
	&\frac{2}{M}\sum_{m=1}^{M}\sum_{1\leq i< j\leq M}^{}f( X_{i})f( X_{j}) q_{m,i}^M q_{m,j}^M1_{\left\lbrace m = \mu_{i}^M = \mu_{j-1}^M  \right\rbrace}\\
	&=\frac{2}{M}\sum_{1\leq i< j\leq M}^{}f( X_{i})f( X_{j}) \left(1_{\left\lbrace \mu_{i-1}^M<\mu_{i}^M\right\rbrace }u_i^M+1_{\left\lbrace \mu_{i-1}^M=\mu_{i}^M\right\rbrace }w_i^M \right)\\
	&\phantom{=========}\times  \left( 1_{\left\lbrace \mu_{j-1}^M<\mu_{j}^M\right\rbrace}(1-u_{j-1}^M)+1_{\left\lbrace \mu_{j-1}^M=\mu_{j}^M\right\rbrace }w_j^M\right) 1_{\left\lbrace  \mu_{i}^M = \mu_{j-1}^M  \right\rbrace}\\
	&=  \frac{2}{M} \sum_{k=1}^{M-1}\sum_{i=1}^{M-k}f( X_{i})f( X_{i+k}) \left( \rho_{i,k}^M+\tilde\rho_{i,k}^M\right) 
	\end{align*}

	where	for $k=1,\cdots,M-1$ and $i=1,\cdots,M-k$
	\begin{align*}
		&\rho_{i,k}^M = u_i^M  \left( 1_{\left\lbrace \mu_{i+k-1}^M<\mu_{i+k}^M\right\rbrace}(1-u_{i+k-1}^M)+1_{\left\lbrace \mu_{i+k-1}^M=\mu_{i+k}^M\right\rbrace }w_{i+k}^M\right) 1_{\left\lbrace  \mu_{i}^M = \mu_{i+k-1}^M  \right\rbrace}\\
			&\tilde{\rho}_{i,k}^M = \left( w_i^M -u_i^M \right)  \left( 1_{\left\lbrace \mu_{i+k-1}^M<\mu_{i+k}^M\right\rbrace}(1-u_{i+k-1}^M)+1_{\left\lbrace \mu_{i+k-1}^M=\mu_{i+k}^M\right\rbrace }w_{i+k}^M\right) 1_{\left\lbrace \mu_{i-1}^M=\mu_{i}^M\right\rbrace }1_{\left\lbrace  \mu_{i}^M = \mu_{i+k-1}^M  \right\rbrace}.
	\end{align*}

	Let us now rewrite $\rho_{i,k}^M$ and $\tilde\rho_{i,k}^M $. To simplify the notation, we will denote  $\sum\limits_{\ell=j_1}^{j_2}w_\ell^M$ for $1 \leq j_1\leq j_2 \leq M$ by $s_{j_1}^{j_2}$. We can apply Lemma \ref{ausiliary_remark} to the couples \\
	
	\begin{enumerate}
		\item $(p,\ell)=(i,i+k-1)$ so that $1_{\left\lbrace \mu_{i}^M = \mu_{i+k-1}^M\right\rbrace }=1_{\left\lbrace s_{i+1}^{i+k-1} <1-u_i^M\right\rbrace }$
		\item  $(p,\ell)=(i-1,i)$ so that $1_{\left\lbrace \mu_{i-1}^M=\mu_{i}^M\right\rbrace }=1_{\left\lbrace \omega_{i}^M<1-u_{i-1}^M\right\rbrace }$
		\item $(p,\ell)=(i+k-1,i+k)$ so that $1_{\left\lbrace \mu_{i+k-1}^M=\mu_{i+k}^M\right\rbrace }=1_{\left\lbrace\omega_{i+k}^M<1-u_{i+k-1}^M \right\rbrace } $  and $1_{\left\lbrace \mu_{i+k-1}^M<\mu_{i+k}^M\right\rbrace }=1_{\left\lbrace\omega_{i+k}^M\geq1-u_{i+k-1}^M \right\rbrace } \\$
	\end{enumerate}
	so that, by observing that when $s_{i+1}^{i+k-1} <1-u_i^M$ we can rewrite $u_{i+k-1}^M$ as $u_i^M+s_{i+1}^{i+k-1}$, one has 
	\begin{align*}
		\rho_{i,k}^M &= u_i^M  \left( 1_{\left\lbrace\omega_{i+k}^M\geq1-u_{i+k-1}^M \right\rbrace }(1-u_{i+k-1}^M)+w_{i+k}^M1_{\left\lbrace\omega_{i+k}^M<1-u_{i+k-1}^M \right\rbrace } \right) 1_{\left\lbrace s_{i+1}^{i+k-1} <1-u_i^M\right\rbrace } \\
		&= u_i^M  \left( 1_{\left\lbrace\omega_{i+k}^M\geq1-u_i^M-s_{i+1}^{i+k-1} \right\rbrace }(1-u_i^M-s_{i+1}^{i+k-1})+w_{i+k}^M1_{\left\lbrace\omega_{i+k}^M<1-u_i^M-s_{i+1}^{i+k-1} \right\rbrace } \right) 1_{\left\lbrace s_{i+1}^{i+k_1-1} <1-u_i^M\right\rbrace }\\
		&=u_i^M (1-u_i^M-s_{i+1}^{i+k-1})1_{\left\lbrace s_{i+1}^{i+k-1} <1-u_i^M\leq s_{i+1}^{i+k} \right\rbrace}+u_i^Mw_{i+k}^M1_{\left\lbrace s_{i+1}^{i+k}<1-u_i^M \right\rbrace }\\
		&= u_i^M (1-u_i^M-s_{i+1}^{i+k-1})1_{\left\lbrace s_{i+1}^{i+k-1} <1-u_i^M \right\rbrace}-u_i^M (1-u_i^M-s_{i+1}^{i+k})1_{\left\lbrace s_{i+1}^{i+k} <1-u_i^M\right\rbrace}
	\end{align*}
	
and, by observing that when $w_i^M < 1- u_{i-1}^M$ and $s_{i+1}^{i+k-1} <1-u_i^M$ we can rewrite $u_{i}^M=u_{i-1}^M+w_i^M $ and  $u_{i+k-1}^M = u_{i-1}^M + s_{i}^{i+k-1}$, one has 
	\begin{align*}
	\tilde\rho_{i,k}^M&=\left( w_i^M -u_i^M\right) \left( 1_{\left\lbrace\omega_{i+k}^M\geq1-u_{i+k-1}^M \right\rbrace }(1-u_{i+k-1}^M)+w_{i+k}^M1_{\left\lbrace\omega_{i+k}^M<1-u_{i+k-1}^M \right\rbrace } \right)1_{\left\lbrace \omega_{i}^M<1-u_{i-1}^M\right\rbrace}1_{\left\lbrace s_{i+1}^{i+k-1} <1-u_i^M\right\rbrace }\\
			&= -\left( u_{i-1}^M+w_i^M\right) \left( 1_{\left\lbrace s_{i}^{i+k}\geq1-u_{i-1}^M \right\rbrace }(1-u_{i-1}^M - s_{i}^{i+k-1})+w_{i+k}^M1_{\left\lbrace s_{i}^{i+k}<1-u_{i-1}^M   \right\rbrace } \right)1_{\left\lbrace s_{i}^{i+k-1} <1-u_{i-1}^M\right\rbrace }\\ 
	&\phantom{=}+w_i^M \left( 1_{\left\lbrace s_{i}^{i+k}\geq1-u_{i-1}^M \right\rbrace }(1-u_{i-1}^M - s_{i}^{i+k-1})+w_{i+k}^M1_{\left\lbrace s_{i}^{i+k}<1-u_{i-1}^M   \right\rbrace } \right)1_{\left\lbrace s_{i}^{i+k-1} <1-u_{i-1}^M\right\rbrace }\\
	&= - u_{i-1}^M (1-u_{i-1}^M - s_{i}^{i+k-1})1_{\left\lbrace s_{i}^{i+k-1} <1-u_{i-1}^M \leq s_{i}^{i+k} \right\rbrace }- u_{i-1}^Mw_{i+k}^M1_{\left\lbrace s_{i}^{i+k}<1-u_{i-1}^M   \right\rbrace }\\
	&=- u_{i-1}^M (1-u_{i-1}^M - s_{i}^{i+k-1})1_{\left\lbrace s_{i}^{i+k-1} <1-u_{i-1}^M \right\rbrace }+ u_{i-1}^M (1-u_{i-1}^M - s_{i}^{i+k})1_{\left\lbrace s_{i}^{i+k} <1-u_{i-1}^M \right\rbrace}.
	\end{align*}

	
	Therefore we have obtained that 
	\begin{align*}
	&\frac{1}{M}\sum_{m=1}^{M}\sum_{1\leq i< j\leq M}^{}f( X_{i})f( X_{j}) q_{m,i}^M q_{m,j}^M1_{\left\lbrace m = \mu_{i}^M = \mu_{j-1}^M \right\rbrace}\\
	& \phantom{==}= \frac{1}{M} \sum_{k=1}^{M-1}\sum_{i=1}^{M-k}f( X_{i})f( X_{i+k}) \beta_{1}\left( u_{i-1}^M,w_i^M,\sum\limits_{\ell=2}^{k}w_{i+\ell-1}^M,w_{i+k}^M\right)   
	\end{align*}
	with $\beta_{1}$ defined as in  (\ref{beta1}). Let us now observe that there exists a finite constant $C_1$ such that for $k=1,\cdots,M-1$, $i=1,\cdots,M-k$
	
	\begin{align}
		\left| \beta_{1}\left( u_{i-1}^M,w_i^M,\sum\limits_{\ell=2}^{k}w_{i+\ell-1}^M,w_{i+k}^M\right)\right|\leq C_1 1_{\left\lbrace \sum\limits_{\ell=2}^{k}w_{i+\ell-1}^M<1\right\rbrace }  \leq  C_1 1_{\left\lbrace k <1+\dfrac{\bar{g}}{\underline{g}}\right\rbrace }.
	\end{align}
	Therefore
		\begin{align*}
	& \frac{1}{M} \sum_{k=1}^{M-1}\sum_{i=1}^{M-k}f( X_{i})f( X_{i+k}) \beta_{1}\left( u_{i-1}^M,w_i^M,\sum\limits_{\ell=2}^{k}w_{i+\ell-1}^M,w_{i+k}^M\right)  \\
	&=  \frac{1}{M} \sum_{k=1}^{(M-1)\wedge \left\lceil\frac{\bar{g}}{\underline{g}}\right\rceil }\sum_{i=1}^{M-k}f( X_{i})f( X_{i+k}) \beta_{1}\left( u_{i-1}^M,w_i^M,\sum\limits_{\ell=2}^{k}w_{i+\ell-1}^M,w_{i+k}^M\right).
	\end{align*}
\end{proof}

\section{Asymptotic Variance for the Next Steps}\label{nextsteps}
What we have seen so far is the study of the stratified sampling selection step: it is actually part of a more general algorithm where each step consists of a selection part and a mutation part that we are now going to describe more in details (see for instance \cite{delmoral} for a more general version of the algorithm).\\

For each step $n\geq 0$, we are going to recursively define the selection sequence $\left(Y_n^{M,m}\right)_{1\leq m\leq M}$ and the mutation sequence  $\left(X_n^{M,m}\right)_{1\leq m\leq M}$.
Let $g_n:\mathbb{R}^{d}\rightarrow \left(0,\infty \right), n\geq 0$ be a family of measurable functions such that $0< \inf_{x\in \mathbb{R}^{d}}g_n(x)\leq \sup_{x\in \mathbb{R}^{d}}g_n(x)< \infty$. In what follows we denote $\bar{g}_n:=\sup_{x\in \mathbb{R}^d}g_n(x)$ and $\underline{g}_n:=\inf_{x\in \mathbb{R}^d}g_n(x) $. Let for $n\geq 0$ and $k\geq 0$

\begin{align}
\phi_{n}(k):=\left\lceil \dfrac{\bar{g}_n}{\underline{g}_n}\left(1+k \right) \right\rceil.\label{deffunzionen}
\end{align} Moreover let $\left(Z_k\right)_{k\geq 0}$ be a $\mathbb{R}^d$-valued time-inhomogeneous Markov chain that is for each bounded measurable function $h:\mathbb{R}^d\rightarrow \mathbb{R}$, $\mathbb{E}\left(h\left( Z_k\right)|\sigma\left(Z_0,\cdots,Z_{k-1} \right)   \right)= \int_{\mathbb{R}^d}h(x)P_k\left( Z_{k-1},dx\right) $ for a transition kernel  $P_k$ possibly depending on $k$. We denote the law of $Z_{0}$ by $\eta$.\\

\begin{itemize}
	\item[\textbf{Initialization:}] we generate a sequence $(X_{0}^{M,m})_{1\leq m\leq M}$ 
	of i.i.d. $\mathbb{R}^d$-valued random vectors following the law $\eta$. By convention $Y_{0}^{M,m} :=X_{0}^{M,m} $ for $1\leq m \leq M$.
	\item[\textbf{From n to n+1:}] the transition from $\left( Y_n^{M,m},X_n^{M,m}\right)_{1\leq m \leq M} $ to  $\left( Y_{n+1}^{M,m},X_{n+1}^{M,m}\right)_{1\leq m \leq M} $ for $n\geq 0$	consists of two steps.
	\begin{itemize}
		\item[Selection:]we generate the random vectors $\left(Y_{n+1}^{M,m} \right)_{1\leq m \leq M}$ conditionally independent given \\
		$\mathcal{F}^{n}:= \sigma\left( \left(X_i^{M,m},Y_i^{M,m}\right)_{1\leq m\leq M,0\leq i\leq n}\right)$ by 
		\begin{align}
		Y_{n+1}^{M,m}=\sum_{\ell=1}^{M}1_{\left\lbrace \frac{M\sum\limits_{j=1}^{\ell-1}g_n\left(X_n^{M,j} \right)}{\sum\limits_{j=1}^{M}g_n\left(X_n^{M,j} \right)}<m-U_n^{m}\leq \frac{M\sum\limits_{j=1}^{\ell}g_n\left(X_n^{M,j} \right)}{\sum\limits_{j=1}^{M}g_n\left(X_n^{M,j} \right)}\right\rbrace } X_n^{M,\ell} \quad m=1,\cdots,M \label{defYn}
		\end{align}
		where $\left( U_n^m\right)_{1\leq m \leq M} $ is a sequence of i.i.d. random variables  independent of $\mathcal{F}^n$ and distributed according to the uniform law on $(0,1)$.
		\item[Mutation:] given $\mathcal{G}^{n+1}:=    \sigma\left( \left(X_i^{M,m},Y_i^{M,m}\right)_{1\leq m\leq M,0\leq i\leq n} ,\left( Y_{n+1}^{M,m} \right)_{1\leq m\leq M}\right)$ we generate the random vectors $X_{n+1}^{M,m},1\leq m \leq M$ conditionally independent and respectively distributed according to the probability measure $P_{n+1}\left(Y_{n+1}^{M,m},\cdot \right)$\\
	\end{itemize}
\end{itemize}

For each $n\geq 0$ we denote the weights by
\begin{align*}
w_n^{M,m}=\frac{Mg_n(X_n^{M,m})}{\sum\limits_{\ell=1}^{M}g_n(X_n^{M,\ell})}\quad m=1,\cdots,M.
\end{align*}

Moreover let us observe that the selection property for $n=0$ (\ref{selection})
 remains valid for each $n$: $\forall n\geq 0$

\begin{align}\label{selection_n}
\mathbb{E}\left( \frac{1}{M}\sum_{m=1}^{M}\delta_{Y_{n+1}^{M,m}}\arrowvert\mathcal{F}^{n}\right)= \sum\limits_{m=1}^{M}\frac{w_n^{M,m}}{M}\delta_{ X_n^{M,m}}.	
\end{align}

The study of the asymptotic variance of the selection part of the first step that is $\textrm{Var}\left( \frac{1}{\sqrt{M}}\sum_{m=1}^{M}f\left( Y_1^{M,m}\right) \right)$  for a given bounded measurable function $f:\mathbb{R}^{ d}\rightarrow \mathbb{R}$ was the object of Section \ref{secasymptoticvariance}.
Our purpose now is to generalize this result by studying the asymptotic variance of $\frac{1}{\sqrt{M}}\sum\limits_{m=1}^{M}f\left(Y_{n+1}^{M,m}\right)$ for  $n\geq 1$.\\ Before doing it let us begin with some notation: for $n\geq 0$, $M\geq 1$ and $k= 0,\cdots, M-1$ let 

\begin{align*}
\widetilde\eta_{n}^{k,M}:= \frac{1}{M}\sum\limits_{i=1}^{M-k} \delta_{\left(Y_n^{M,i} ,\cdots,Y_{n}^{M,i+k} \right) }
\end{align*}
\begin{align*}
\bar\eta_{n}^{k,M} := \frac{1}{M}\sum\limits_{i=1}^{M-k} \delta_{\left( X_n^{M,i},\cdots ,X_{n}^{M,i+k}\right)}.
\end{align*}

\begin{remark}\label{stronglaw}
Let $f\in \mathcal{B}_b\left( \mathbb{R}^d\right) $. If $k=0$ and $n=0$, by the classical Strong Law of Large Numbers we have 
  
  \begin{align*}
  	\lim_{M\rightarrow \infty}\widetilde\eta_{0}^{0,M}(f) = \lim_{M\rightarrow \infty}\bar\eta_{0}^{0,M}(f) = \bar\eta_{0}^{0}(f):= \eta(f) =  \mathbb{E}\left(f\left(Z_0 \right)  \right)\quad a.s. 
  \end{align*}
  
  It is possible to prove that a  Strong Law of Large Numbers holds (see \cite[Corollary 7.4.2]{delmoral}) also in the case $k=0$ and $n\geq 1$:
	\begin{align*}
		\lim_{M\rightarrow \infty}\widetilde\eta_{n}^{0,M}(f) = \lim_{M\rightarrow \infty} \frac{1}{M}\sum\limits_{i=1}^{M } f\left(Y_n^{M,i}\right)= \widetilde\eta_{n}^{0}(f) := \frac{\mathbb{E}\left(f\left(Z_{n-1} \right)\prod_{p=0}^{n-1}g_p\left(Z_p \right)  \right)}{\mathbb{E}\left(\prod_{p=0}^{n-1}g_p\left(Z_p \right)  \right)}\quad a.s.  
	\end{align*}
	
	\begin{align*}
		\lim_{M\rightarrow \infty}\bar\eta_{n}^{0,M}(f) = 		\lim_{M\rightarrow \infty} \frac{1}{M}\sum\limits_{i=1}^{M}f\left(X_n^{M,i}  \right)= \bar\eta_{n}^{0}(f) := \frac{\mathbb{E}\left(f\left(Z_{n} \right)\prod_{p=0}^{n-1}g_p\left(Z_p \right)  \right)}{\mathbb{E}\left(\prod_{p=0}^{n-1}g_p\left(Z_p \right)\right)}\quad a.s.  .
	\end{align*}
	
\end{remark}
	We are now going to prove that given $h:\mathbb{R}^{2 d}\rightarrow \mathbb{R}$ a bounded measurable function, the study of the asymptotic variance of $\frac{1}{\sqrt{M}}\sum\limits_{m=1}^{M}h\left(Y_{n+1}^{M,m}, X_{n+1}^{M,m}\right) $ depends on the study of the asymptotic variance of $\frac{1}{\sqrt{M}}\sum\limits_{m=1}^{M}f\left(Y_{n+1}^{M,m}\right)$ for $f:\mathbb{R}^{ d}\rightarrow \mathbb{R}$ a bounded measurable function. 
\begin{prop}\label{mutesel}
	Let $h\in \mathcal{B}_b\left( \mathbb{R}^{2 d}\right) $. The following convergence holds
	\begin{align*}
		\left| \textrm{Var}\left(\frac{1}{\sqrt{M}}\sum\limits_{m=1}^{M}h\left(Y_{n+1}^{M,m}, X_{n+1}^{M,m}\right) \right)- \textrm{Var}\left(\frac{1}{\sqrt{M}}\sum\limits_{m=1}^{M} P_{n+1}h\left(Y_{n+1}^{M,m}\right)\right)-\frac{\bar\eta_{n}^{0}\left( g_n\left( P_{n+1}h^2-\left( P_{n+1}h\right) ^2\right) \right)  }{\bar\eta_{n}^{0}(g_n)} \right| \underset{M\rightarrow \infty}{\longrightarrow} 0 .
	\end{align*}
\end{prop}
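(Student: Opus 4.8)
The plan is to apply the law of total variance with respect to the $\sigma$-algebra $\mathcal{G}^{n+1}$ generated after the mutation step. Writing $W_M := \frac{1}{\sqrt{M}}\sum_{m=1}^{M}h(Y_{n+1}^{M,m},X_{n+1}^{M,m})$, I would use the decomposition $\textrm{Var}(W_M) = \textrm{Var}\left(\mathbb{E}(W_M\mid\mathcal{G}^{n+1})\right) + \mathbb{E}\left(\textrm{Var}(W_M\mid\mathcal{G}^{n+1})\right)$. The first summand will turn out to be exactly the second term appearing in the statement, so that the full difference reduces to showing that the conditional-variance summand converges to $\bar\eta_n^0(g_n(P_{n+1}h^2-(P_{n+1}h)^2))/\bar\eta_n^0(g_n)$.

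For the explained-variance term, I would use that, given $\mathcal{G}^{n+1}$, each $Y_{n+1}^{M,m}$ is measurable while $X_{n+1}^{M,m}$ is distributed according to $P_{n+1}(Y_{n+1}^{M,m},\cdot)$, whence $\mathbb{E}(h(Y_{n+1}^{M,m},X_{n+1}^{M,m})\mid\mathcal{G}^{n+1}) = P_{n+1}h(Y_{n+1}^{M,m})$. Summing over $m$ gives $\mathbb{E}(W_M\mid\mathcal{G}^{n+1}) = \frac{1}{\sqrt{M}}\sum_{m=1}^{M}P_{n+1}h(Y_{n+1}^{M,m})$, whose variance is precisely the second term of the statement. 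It therefore only remains to control $\mathbb{E}(\textrm{Var}(W_M\mid\mathcal{G}^{n+1}))$.

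Next I would exploit the conditional independence of $(X_{n+1}^{M,m})_{1\le m\le M}$ given $\mathcal{G}^{n+1}$ to write $\textrm{Var}(W_M\mid\mathcal{G}^{n+1}) = \frac{1}{M}\sum_{m=1}^{M}\left(P_{n+1}h^2 - (P_{n+1}h)^2\right)(Y_{n+1}^{M,m})$, each conditional variance being $P_{n+1}h^2(Y_{n+1}^{M,m})-(P_{n+1}h(Y_{n+1}^{M,m}))^2$. Setting $\psi := P_{n+1}h^2 - (P_{n+1}h)^2$, which is bounded since $h$ is, I would take expectations and first condition on $\mathcal{F}^n$, applying the selection identity (\ref{selection_n}) to get $\mathbb{E}(\frac{1}{M}\sum_m\psi(Y_{n+1}^{M,m})\mid\mathcal{F}^n) = \sum_m\frac{w_n^{M,m}}{M}\psi(X_n^{M,m}) = \bar\eta_n^{0,M}(g_n\psi)/\bar\eta_n^{0,M}(g_n)$, where the last equality uses $w_n^{M,m}/M = g_n(X_n^{M,m})/\sum_\ell g_n(X_n^{M,\ell})$. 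Hence $\mathbb{E}(\textrm{Var}(W_M\mid\mathcal{G}^{n+1})) = \mathbb{E}\left(\bar\eta_n^{0,M}(g_n\psi)/\bar\eta_n^{0,M}(g_n)\right)$.

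Finally I would pass to the limit. By the Strong Law of Large Numbers quoted in Remark \ref{stronglaw}, $\bar\eta_n^{0,M}(g_n\psi)\to\bar\eta_n^0(g_n\psi)$ and $\bar\eta_n^{0,M}(g_n)\to\bar\eta_n^0(g_n)$ almost surely; since $g_n\ge\underline{g}_n>0$ and $\psi$ is bounded, the ratio is uniformly bounded (numerator $\le\bar g_n\|\psi\|_\infty$, denominator $\ge\underline g_n$), so dominated convergence yields $\mathbb{E}(\textrm{Var}(W_M\mid\mathcal{G}^{n+1}))\to\bar\eta_n^0(g_n\psi)/\bar\eta_n^0(g_n)$, which is exactly the third term. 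Combining this with the total-variance decomposition gives the claimed convergence to $0$. The only genuinely delicate ingredient is the Strong Law for $n\ge1$ (Remark \ref{stronglaw}, borrowed from \cite[Corollary 7.4.2]{delmoral}); granting it, the remaining steps are routine, the uniform lower bound $g_n\ge\underline g_n>0$ being precisely what keeps the ratio dominated so that the expectations converge.
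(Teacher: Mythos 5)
Your proposal is correct and follows essentially the same route as the paper's proof: the law of total variance with respect to $\mathcal{G}^{n+1}$, the identification of the conditional expectation with $P_{n+1}h(Y_{n+1}^{M,m})$, the use of conditional independence and the selection identity (\ref{selection_n}) to reduce the conditional-variance term to a weighted empirical average over the $X_n^{M,m}$, and finally the Strong Law from Remark \ref{stronglaw} combined with dominated convergence (justified by $g_n\geq\underline{g}_n>0$). The only cosmetic difference is that you bundle $P_{n+1}h^2-(P_{n+1}h)^2$ into a single bounded function $\psi$ before passing to the limit, whereas the paper treats the two terms separately; this changes nothing of substance.
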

\begin{proof}[Proof of Proposition \ref{mutesel}]
By using that the $X_{n+1}^{M,m}$ are conditionally independent given $\mathcal{G}^{n+1}$ and that the $Y_{n+1}^{M,m}$ are $\mathcal{G}^{n+1}$ measurable, one has\\

	\begin{align*}
	&\textrm{Var}\left(\frac{1}{\sqrt{M}}\sum\limits_{m=1}^{M}h\left(Y_{n+1}^{M,m}, X_{n+1}^{M,m}\right) \right)\nonumber \\
	& = \textrm{Var}\left(\frac{1}{\sqrt{M}}\sum\limits_{m=1}^{M} \mathbb{E}\left(h\left(Y_{n+1}^{M,m}, X_{n+1}^{M,m}\right)\mathrel{\Big|}\mathcal{G}^{n+1} \right)\right)+\mathbb{E}\left(\frac{1}{M}\sum\limits_{m=1}^{M} \textrm{Var}\left(h\left(Y_{n+1}^{M,m}, X_{n+1}^{M,m}\right) \mathrel{\Big|}\mathcal{G}^{n+1}\right)\right)\nonumber\\
	& = \textrm{Var}\left(\frac{1}{\sqrt{M}}\sum\limits_{m=1}^{M} \mathbb{E}\left(h\left(Y_{n+1}^{M,m}, X_{n+1}^{M,m}\right)\mathrel{\Big|}\mathcal{G}^{n+1} \right)\right)+ \mathbb{E}\left(\frac{1}{M}\sum\limits_{m=1}^{M} \mathbb{E}\left(h^2\left(Y_{n+1}^{M,m}, X_{n+1}^{M,m}\right) \mathrel{\Big|}\mathcal{G}^{n+1}\right)\right)\nonumber\\
	&\phantom{==}- \mathbb{E}\left(\frac{1}{M}\sum\limits_{m=1}^{M} \mathbb{E}\left(h\left(Y_{n+1}^{M,m}, X_{n+1}^{M,m}\right) \mathrel{\Big|}\mathcal{G}^{n+1}\right)^2\right).\nonumber\\
	\end{align*}
	
	Since $\mathcal{L}\left( X_{n+1}^{M,m}\mathrel{|}\mathcal{G}^{n+1} \right)\sim P_{n+1}\left(Y_{n+1}^{M,m},\cdot\right)$, for each bounded measurable function $\tilde{h}:\mathbb{R}^{2 d}\rightarrow \mathbb{R}$ one has
	
	\begin{align} \label{law}
	\mathbb{E}\left( \tilde{h}(Y_{n+1}^{M,m},X_{n+1}^{M,m})\mathrel{|}\mathcal{G}^{n+1} \right) = P_{n+1}\tilde{h}\left(Y_{n+1}^{M,m}\right):= \int_{\mathbb{R}^d} \tilde{h}(Y_{n+1}^{M,m},x)P_{n+1}\left(Y_{n+1}^{M,m},dx\right).
	\end{align}
	
	Therefore we obtain that 
	
	\begin{align*}
	&\textrm{Var}\left(\frac{1}{\sqrt{M}}\sum\limits_{m=1}^{M}h\left(Y_{n+1}^{M,m}, X_{n+1}^{M,m}\right) \right)\nonumber \\
	&= \textrm{Var}\left(\frac{1}{\sqrt{M}}\sum\limits_{m=1}^{M} P_{n+1}h\left(Y_{n+1}^{M,m}\right)\right)+ \mathbb{E}\left(\frac{1}{M}\sum\limits_{m=1}^{M} P_{n+1}h^2\left(Y_{n+1}^{M,m}\right)\right)- \mathbb{E}\left(\frac{1}{M}\sum\limits_{m=1}^{M} \left( P_{n+1}h\left(Y_{n+1}^{M,m}\right)\right) ^2\right)\\
	&= \textrm{Var}\left(\frac{1}{\sqrt{M}}\sum\limits_{m=1}^{M} P_{n+1}h\left(Y_{n+1}^{M,m}\right)\right)+ \mathbb{E}\left( \sum\limits_{m=1}^{M}\frac{w_n^{M,m}}{M}P_{n+1}h^2\left( X_n^{M,m}\right) \right) -  \mathbb{E}\left( \sum\limits_{m=1}^{M}\frac{w_n^{M,m}}{M}\left( Ph\left( X_n^{M,m}\right)\right) ^2 \right)
	\end{align*}
	where to obtain the last equality we use the selection property  (\ref{selection_n}). By Remark \ref{stronglaw},
	
	\begin{align*}
		&\lim_{M\rightarrow \infty} \frac{1}{M}\sum\limits_{m=1}^Mg_n\left(X_n^{M,m}\right) P_{n+1}h^2\left( X_n^{M,m}\right) =  \bar\eta_{n}^{0}(g_nP_{n+1}h^2)\quad a.s.,\\
		& \lim_{M\rightarrow \infty} \frac{1}{M}\sum\limits_{m=1}^Mg_n\left(X_n^{M,m}\right) \left( P_{n+1}h\left( X_n^{M,m}\right) \right) ^2=   \bar\eta_{n}^{0}(g_n\left( P_{n+1}h\right) ^2)\quad a.s.,\\
		& \lim_{M\rightarrow \infty} \frac{1}{M}\sum\limits_{m=1}^Mg_n\left(X_n^{M,m}\right) =  \bar\eta_{n}^{0}(g_n)\quad a.s.
	\end{align*}
	Thus by Lebesgue theorem, 
	\begin{align*}
	&	\lim_{M\rightarrow \infty}\mathbb{E}\left( \sum\limits_{m=1}^{M}\frac{w_n^{M,m}}{M}P_{n+1}h^2\left( X_n^{M,m}\right) \right)= \frac{\bar\eta_{n}^{0}(g_nP_{n+1}h^2) }{\bar\eta_{n}^{0}(g_n)},\\
	& 	\lim_{M\rightarrow \infty}\mathbb{E}\left( \sum\limits_{m=1}^{M}\frac{w_n^{M,m}}{M}\left( P_{n+1}h\left( X_n^{M,m}\right)\right) ^2 \right) =\frac{\bar\eta_{n}^{0}(g_n\left( P_{n+1}h\right) ^2) }{\bar\eta_{n}^{0}(g_n)}
	\end{align*}
	and this concludes the proof.
\end{proof}
We are now ready to study for $n\geq1$  and $f\in \mathcal{B}_b\left(\mathbb{R}^d \right) $ the convergence as $M\rightarrow \infty$ of $$V_{n+1}^M\left(f \right):=\textrm{Var}\left(\frac{1}{\sqrt{M}}\sum\limits_{m=1}^{M}f\left(Y_{n+1}^{M,m}\right)\right).$$

With the aim of studying $\lim_{M\rightarrow \infty}V_{n+1}^M\left(f \right)$,
we will assume the following conjectures:

\begin{conjecture}\label{conj1} For each $f\in \mathcal{B}_b\left(\mathbb{R}^d \right) $ one has 
	\begin{align*} 
		\left|\textrm{Var}\left( \sqrt{M}\dfrac{\bar{\eta}^{0,M}_{n}\left(g_nf \right)}{\bar{\eta}^{0,M}_{n}\left(g_n \right)} \right) - \frac{V_{n}^M\left(P_nf_n\right)}{\left( \bar{\eta}_n^0\left(g_n\right)\right) ^4 }-\frac{\bar{\eta}_{n-1}^0\left(g_{n-1}\left( P_n f_n^2-\left(P_n f_n \right)^2 \right) \right) }{\left( \bar{\eta}_n^0\left(g_n\right)\right)^4\bar{\eta}_{n-1}^0\left(g_{n-1}\right)}\right|   \underset{M\rightarrow \infty}{\longrightarrow} 0 
	\end{align*}
where
\begin{align*}
f_n:=g_n\left( \bar{\eta}_n^0\left(g_n\right)f-\bar{\eta}_n^0\left(g_nf\right)\right).
\end{align*}
\end{conjecture}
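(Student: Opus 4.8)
The plan is to reduce Conjecture \ref{conj1} to the already-established Proposition \ref{mutesel} via a delta-method linearization, the only genuinely missing ingredient being a uniform fourth-moment bound for the particle system. Write $A_M:=\bar\eta_n^{0,M}(g_nf)$, $B_M:=\bar\eta_n^{0,M}(g_n)$, $a:=\bar\eta_n^0(g_nf)$ and $b:=\bar\eta_n^0(g_n)$, so the object of interest is $\textrm{Var}(\sqrt{M}A_M/B_M)$. Since $\sqrt{M}\,a/b$ is deterministic this variance equals $\textrm{Var}(\sqrt{M}(A_M/B_M-a/b))$, and because $g_n(x)(bf(x)-a)=f_n(x)$ the elementary identity
\begin{equation*}
bA_M-aB_M=\frac1M\sum_{i=1}^M g_n(X_n^{M,i})\big(b\,f(X_n^{M,i})-a\big)=\bar\eta_n^{0,M}(f_n)
\end{equation*}
produces the exact representation $\sqrt{M}(A_M/B_M-a/b)=\tfrac{1}{b\,B_M}\cdot\tfrac{1}{\sqrt{M}}\sum_{i=1}^M f_n(X_n^{M,i})$, with $f_n$ as in the statement. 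Note that $f_n$ is a fixed bounded measurable function and that $\bar\eta_n^0(f_n)=0$ by construction.

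Set $W_M:=\tfrac{1}{\sqrt{M}}\sum_{i=1}^M f_n(X_n^{M,i})$. The second step is to replace the random prefactor $1/(bB_M)$ by its almost sure limit $1/b^2$. Exactly as in the first Proposition of Section \ref{secasymptoticvariance}, Cauchy--Schwarz gives
\begin{equation*}
\mathbb{E}\bigg[\Big(\tfrac{W_M}{b B_M}-\tfrac{W_M}{b^2}\Big)^2\bigg]=\frac{1}{b^2}\,\mathbb{E}\Big[W_M^2\big(\tfrac{1}{B_M}-\tfrac1b\big)^2\Big]\le \frac{1}{b^2}\,\mathbb{E}^{1/2}[W_M^4]\;\mathbb{E}^{1/2}\Big[\big(\tfrac{1}{B_M}-\tfrac1b\big)^4\Big].
\end{equation*}
Since $\underline{g}_n\le g_n\le \bar g_n$, the factor $(1/B_M-1/b)^4$ is bounded and tends to $0$ almost surely by the Strong Law of Large Numbers (Remark \ref{stronglaw}), so its expectation vanishes by dominated convergence. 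Provided $D:=\sup_M\mathbb{E}[W_M^4]<\infty$, the right-hand side tends to $0$; this $D$ also bounds $\textrm{Var}(W_M)\le \mathbb{E}^{1/2}[W_M^4]$, so the covariance estimate $|\textrm{Cov}(\tfrac{W_M}{b^2},R_M)|\le \sqrt{\textrm{Var}(\tfrac{W_M}{b^2})\,\mathbb{E}[R_M^2]}$ for $R_M:=\tfrac{W_M}{bB_M}-\tfrac{W_M}{b^2}$ forces $\textrm{Var}(\sqrt{M}A_M/B_M)-\tfrac{1}{b^4}\textrm{Var}(W_M)\underset{M\rightarrow \infty}{\longrightarrow}0$.

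The third step identifies $\tfrac{1}{b^4}\textrm{Var}(W_M)$. Applying Proposition \ref{mutesel} with $n$ replaced by $n-1$ and with the test function $h(y,x):=f_n(x)$, for which $P_nh=P_nf_n$, $P_nh^2=P_nf_n^2$ and $(P_nh)^2=(P_nf_n)^2$, yields
\begin{equation*}
\Big|\textrm{Var}(W_M)-V_n^M(P_nf_n)-\tfrac{\bar\eta_{n-1}^0\!\big(g_{n-1}(P_nf_n^2-(P_nf_n)^2)\big)}{\bar\eta_{n-1}^0(g_{n-1})}\Big|\underset{M\rightarrow \infty}{\longrightarrow}0.
\end{equation*}
Dividing by $b^4=(\bar\eta_n^0(g_n))^4$ and combining with the second step gives precisely the claimed convergence.

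The hard part, and the reason the statement is only a conjecture, is the uniform bound $\sup_M\mathbb{E}\big[(\tfrac{1}{\sqrt{M}}\sum_{i=1}^M f_n(X_n^{M,i}))^4\big]<\infty$. In the i.i.d.\ base case this is the elementary estimate (\ref{boundness}), where independence together with $\bar\eta_n^0(f_n)=0$ annihilates every mixed term. For $n\ge 1$, however, the positions $(X_n^{M,i})_{1\le i\le M}$ are generated by successive stratified selections and mutations and are only exchangeable, not conditionally independent given the past; controlling their fourth-order correlations requires $L^4$ propagation-of-chaos estimates that are classical for multinomial resampling (see \cite{delmoral}) but are, to our knowledge, unavailable for the stratified mechanism, whose intricate conditional dependence structure is exactly what makes the present analysis delicate. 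Establishing this moment estimate is what would upgrade the conjecture to a theorem.
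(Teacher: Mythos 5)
The statement you were asked to prove is not proved in the paper at all: it is stated as Conjecture \ref{conj1}, and the paper only offers an informal remark sketching why it should hold. Your argument follows exactly the route of that remark — the exact identity $\sqrt{M}\left(\bar\eta_n^{0,M}(g_nf)/\bar\eta_n^{0,M}(g_n)-\bar\eta_n^{0}(g_nf)/\bar\eta_n^{0}(g_n)\right)=\sqrt{M}\,\bar\eta_n^{0,M}(f_n)/\left(\bar\eta_n^{0,M}(g_n)\bar\eta_n^{0}(g_n)\right)$, the replacement of the random denominator by its almost sure limit, and the identification of $\textrm{Var}\left(\sqrt{M}\,\bar\eta_n^{0,M}(f_n)\right)$ via Proposition \ref{mutesel} applied one step earlier with $h(y,x)=f_n(x)$ — and each of these steps is carried out correctly. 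You have also correctly isolated the one ingredient that neither you nor the paper supplies, namely the uniform bound $\sup_M\mathbb{E}\left[\left(M^{-1/2}\sum_{i=1}^M f_n(X_n^{M,i})\right)^4\right]<\infty$ for $n\geq 1$, without which the denominator-replacement step (and hence the whole chain) is not justified; this is precisely why the statement remains a conjecture rather than a theorem, so your write-up should be read as a correct conditional reduction, not as a proof.
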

\begin{remark}
	In Section \ref{secasymptoticvariance} we proved that in the case $n=0$, by observing that $\textrm{Var}\left( \sqrt{M}\dfrac{\bar{\eta}^{0,M}_{0}\left(g_0f \right)}{\bar{\eta}^{0,M}_{0}\left(g_0 \right)} \right) = \textrm{Var}\left( \sqrt{M}\dfrac{\bar{\eta}^{0,M}_{0}\left(\frac{g_0}{\eta\left(g_0 \right)}f \right)}{\bar{\eta}^{0,M}_{0}\left(\frac{g_0}{\eta\left(g_0 \right)}\right)} \right)$,  one has
	$\left|\textrm{Var}\left( \sqrt{M}\dfrac{\bar{\eta}^{0,M}_{0}\left(g_0f \right)}{\bar{\eta}^{0,M}_{0}\left(g_0\right)} \right) -\frac{\eta\left(f_0^2 \right)}{\eta\left(g_0 \right)^4}  \right|\underset{M\rightarrow \infty}{\longrightarrow} 0.$ \\
	
	For $n\geq 1$ the idea underlying the Conjecture \ref{conj1} is the following: by considering
	\begin{align*}
		\textrm{Var}\left( \sqrt{M}\left( \dfrac{\bar{\eta}^{0,M}_{n}\left(g_nf \right)}{\bar{\eta}^{0,M}_{n}\left(g_n \right)}- \frac{\bar{\eta}^{0}_{n}\left(g_nf \right)}{\bar{\eta}^{0}_{n}\left(g_n \right)} \right) \right) = \textrm{Var}\left( \sqrt{M} \frac{\bar{\eta}^{0,M}_{n}\left(f_n \right)}{\bar{\eta}^{0,M}_{n}\left(g_n \right)\bar{\eta}^{0}_{n}\left(g_n \right)}  \right),
	\end{align*} 
	we asymptotically replace in the denominator of the right-hand side $\bar{\eta}^{0,M}_{n}\left(g_n \right)$ by $\bar{\eta}^{0}_{n}\left(g_n \right)$. Thus we have
	\begin{align*}
		\left|\textrm{Var}\left( \sqrt{M} \frac{\bar{\eta}^{0,M}_{n}\left(f_n \right)}{\bar{\eta}^{0,M}_{n}\left(g_n \right)\bar{\eta}^{0}_{n}\left(g_n \right)}  \right) - \frac{1}{\left( \bar{\eta}_n^0\left(g_n\right)\right) ^4} \textrm{Var}\left(\sqrt{M}\bar{\eta}^{0,M}_{n}\left(f_n \right)\right)  \right|\underset{M\rightarrow \infty}{\longrightarrow}0.
	\end{align*}
	We then rewrite
	 $\textrm{Var}\left(\sqrt{M}\bar{\eta}^{0,M}_{n}\left(f_n \right)\right) $ following the same
	reasoning used in the proof of Proposition \ref{mutesel}. 
\end{remark}
\begin{conjecture}\label{conj2} Let  $t\in \mathbb{N}$. Given $\psi\in \mathcal{B}_{b}\left(\mathbb{R}^{t+2}\right)$ continuous and $h\in \mathcal{B}_{b}\left(\mathbb{R}^{d(t+1)}\right)$, the following convergence holds 
	\begin{align*}
		&\bigg\lvert \frac{1}{M}\sum_{m=1}^{M-t} \mathbb{E}\left(h\left(X_n^{M,m},\cdots, X_n^{M,m+t} \right)\psi\left(u_{n}^{M,m-1},w_n^{M,m},\cdots,w_n^{M,m+t} \right)   \right)\\
		&\phantom{==}-\frac{1}{M}\sum_{m=1}^{M-t} \mathbb{E}\left(h\left(X_n^{M,m},\cdots, X_n^{M,m+t} \right)\int_{0}^{1}\psi\left(u,\tilde{g}_n(X_n^{M,m}),\cdots,\tilde{g}_n(X_n^{M,m+t})\right)  du \right) \bigg\lvert \underset{M\rightarrow \infty}{\longrightarrow} 0
	\end{align*}
	where $u_{n}^{M,m-1} = \left\lbrace\frac{M\sum\limits_{i=1}^{m-1}g_n\left(X_n^{M,i} \right)}{\sum\limits_{i=1}^{M}g_n\left(X_n^{M,i} \right) } \right\rbrace $ and $\tilde{g}_n(x)= \frac{g_n(x)}{\bar\eta_{n}^{0}(g_n)}$.	
\end{conjecture}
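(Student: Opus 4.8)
The plan is to mimic the two-stage argument used for Theorem \ref{mainresult}: first recast each average over $m$ as an integral over a macroscopic parameter $\alpha\in(0,1)$, then establish for each fixed $\alpha$ a convergence in distribution which decouples the fractional part $u_n^{M,m-1}$ from the local block of particles around index $m$, and finally pass to the limit under the integral by Lebesgue's theorem, using that $h$ is bounded and $\psi$ is bounded and continuous. Setting $m=\lceil\alpha M\rceil$ and discarding the negligible contribution of the last $t$ indices, it suffices to compare, for each $\alpha$, the expectation of
\[
h\big(X_n^{M,\lceil\alpha M\rceil},\dots,X_n^{M,\lceil\alpha M\rceil+t}\big)\,\psi\big(u_n^{M,\lceil\alpha M\rceil-1},w_n^{M,\lceil\alpha M\rceil},\dots,w_n^{M,\lceil\alpha M\rceil+t}\big)
\]
with the analogous quantity in which $\psi$ is replaced by its average over the first argument.

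First I would remove the weights in favour of $\tilde g_n$. Writing $\Sigma:=\sum_{\ell=1}^{M}g_n(X_n^{M,\ell})$, one has $w_n^{M,\lceil\alpha M\rceil+j}=g_n(X_n^{M,\lceil\alpha M\rceil+j})/\bar\eta_n^{0,M}(g_n)$ with $\bar\eta_n^{0,M}(g_n)=\Sigma/M$ converging almost surely to $\bar\eta_n^0(g_n)$ by Remark \ref{stronglaw}, uniformly in $m$ since the normalization does not depend on $m$. Hence
\[
\max_{0\le j\le t}\big|w_n^{M,\lceil\alpha M\rceil+j}-\tilde g_n(X_n^{M,\lceil\alpha M\rceil+j})\big|\le \bar g_n\,\Big|\tfrac{1}{\bar\eta_n^{0,M}(g_n)}-\tfrac{1}{\bar\eta_n^{0}(g_n)}\Big|\underset{M\to\infty}{\longrightarrow}0\quad\text{a.s.}
\]
and is bounded. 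Since all the weights lie in the compact interval $[\underline g_n/\bar g_n,\bar g_n/\underline g_n]$, on which $\psi$ is uniformly continuous, this replacement costs an error that vanishes uniformly in $\alpha$ by dominated convergence.

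The core step is the analogue at level $n$ of Proposition \ref{conv_dist}: for each fixed $\alpha\in(0,1)$ the vector $\big(u_n^{M,\lceil\alpha M\rceil-1},X_n^{M,\lceil\alpha M\rceil},\dots,X_n^{M,\lceil\alpha M\rceil+t}\big)$ should converge in distribution to $(U,\tilde X_0,\dots,\tilde X_t)$ with $U\sim\mathcal U(0,1)$ independent of the limiting block. Conditionally on $\mathcal G^{n}$ the particles $(X_n^{M,i})_{1\le i\le M}$ are independent with laws $P_n(Y_n^{M,i},\cdot)$, so $\sum_{i=1}^{m-1}g_n(X_n^{M,i})$ is a partial sum of conditionally independent terms and $u_n^{M,m-1}=\{M\sum_{i=1}^{m-1}g_n(X_n^{M,i})/\Sigma\}$ is the fractional part of a normalized partial sum, exactly the object treated in \cite{CLTfract}. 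I would therefore invoke a conditional, triangular-array version of Theorem 1 of \cite{CLTfract} to obtain the asymptotic uniformity of this fractional part together with its asymptotic independence from the finitely many coordinates $X_n^{M,\lceil\alpha M\rceil},\dots,X_n^{M,\lceil\alpha M\rceil+t}$, the shared denominator $\Sigma$ being handled as in the companion paper. Granting this, boundedness of $h$ and continuity and boundedness of $\psi$ give, by Lebesgue's theorem and independence,
\[
\lim_{M\to\infty}\mathbb E\big(h(\cdots)\,\psi(u_n^{M,\lceil\alpha M\rceil-1},\tilde g_n(X_n^{M,\lceil\alpha M\rceil}),\dots,\tilde g_n(X_n^{M,\lceil\alpha M\rceil+t}))\big)=\mathbb E\Big(h(\cdots)\!\int_0^1\!\psi(u,\tilde g_n(\tilde X_0),\dots,\tilde g_n(\tilde X_t))\,du\Big),
\]
which is the same pointwise limit in $\alpha$ as the averaged integrand; integrating in $\alpha$ by dominated convergence yields the claim.

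The main obstacle is precisely this conditional equidistribution step. Theorem 1 of \cite{CLTfract} is proved for i.i.d. observations whose common law has an absolutely continuous component, whereas for $n\ge1$ the variables $g_n(X_n^{M,i})$ are only conditionally independent given $\mathcal G^n$ and not identically distributed, their conditional laws being the pushforwards of $P_n(Y_n^{M,i},\cdot)$ by $g_n$, which vary with $i$. Making the argument rigorous would require a non-identically-distributed conditional extension of the Weyl and local-limit machinery of \cite{CLTfract}, resting on a uniform lower bound for the absolutely continuous mass of these conditional laws and on Lindeberg-type control of the partial sums, while still disentangling the coupling induced by the common normalization $\Sigma$. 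It is this extension that we are unable to establish here, which is why the statement is retained as a conjecture.
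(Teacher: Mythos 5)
The paper does not prove this statement: it is explicitly retained as a conjecture, supported only by the numerical experiments of Section \ref{numericalresults} and by the remark that the case $n=0$ follows from the analysis of Section \ref{secasymptoticvariance}. Your outline faithfully reconstructs that $n=0$ argument (rewriting the sum over $m$ as an integral over $\alpha$, replacing the weights by $\tilde g_n$ via the strong law, and invoking the joint convergence of Proposition \ref{conv_dist} built on Theorem 1 of \cite{CLTfract}), and the obstruction you identify for $n\geq 1$ — the particles are only conditionally independent given $\mathcal{G}^n$ with non-identical conditional laws $P_n(Y_n^{M,i},\cdot)$, so the equidistribution result of \cite{CLTfract} does not apply — is exactly the one the authors point to in their own remark following the conjecture. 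Your proposal is therefore consistent with the paper's position, but, as you acknowledge yourself, it is a proof strategy with an unproven core step rather than a proof; no reader should mistake it for closing the conjecture.
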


\begin{remark}
	In Section \ref{secasymptoticvariance} we have provided a formal proof of Conjecture $2$ in the case $n=0$. In that case we strongly used the fact that the random variables we are working with $\left( \left(X_0^{M,m} \right)_{m\geq1}\right) $ are i.i.d. and the fact that the law of $g_0\left(X_0^{M,m} \right) $ has an absolutely continuous component to prove that is possible to asymptotically replace $u_{0}^{M,i-1}$ with a uniformly distributed random variable independent of $\mathcal{F}^{0}$  and $w_{0}^{M,i}$ with $\frac{g_0\left(X_{0}^{M,i} \right)}{\bar\eta_{0}^{0}(g_0)}$.
\end{remark}

Before providing the main result of this section, let us introduce some notation.\\ For  $k\geq 0$ let $P^{\otimes (k+1)}_n:\mathcal{B}_b\left( \mathbb{R}^{d(k+1)}\right)\rightarrow \mathcal{B}_b\left( \mathbb{R}^{d(k+1)}\right)$ be defined by 

\begin{align*}
	P^{\otimes (k+1)}_nh\left(x_0,\cdots,x_k \right):= \int_{\mathbb{R}^{d(k+1)}} h\left(y_0,\cdots,y_k \right)P_n\left(x_0,dy_0 \right) \cdots P_n\left(x_k,dy_k \right)
\end{align*}

and let  for $s_k =0,\cdots, \phi_{{n-1}}(k)$, 
$T_{n-1}^{k\rightarrow s_k }:\mathcal{B}_b\left(\mathbb{R}^{d(k+1)} \right) \rightarrow \mathcal{B}_b\left(\mathbb{R}^{d(s_k+1)} \right) $  given by

 \begin{align*}
&T_{n-1}^{k\rightarrow s_k }f\left(x_0,x_1,\cdots,x_{s_{k}} \right)=\sum_{0\leq s_1\leq s_2\leq\cdots \leq s_{k}} f\left( x_0,x_{s_1}, \cdots,x_{s_{k}} \right)\int_{0}^{1}\psi_{s_{1}:s_{k}}\left(u,\tilde{g}_{n-1}(x_{0}),\tilde{g}_{n-1}(x_{1}),\cdots ,\tilde{g}_{n-1}(x_{s_k})\right)du
\end{align*}
and where given  $0\leq s_k \leq \phi_{{n-1}}(k)$ and $s_0:=0\leq s_1\leq s_2\leq\cdots \leq s_k$:

\begin{align*}
&\psi_{s_{1}:s_{k}}\left(u,y_{0},y_{1},\cdots ,y_{s_k}\right) =\sum\limits_{i=^ 1}^{\left\lceil 1+\bar{g}_{n} /\underline{g}_{n}\right\rceil} \prod_{q=0}^{k}  \int_{i+q-1}^{i+q}1_{\left\lbrace u+\sum\limits_{j=0}^{s_{q}-1}y_{j} <u'\leq u+\sum\limits_{j=0}^{s_{q}}y_{j} \right\rbrace}du'.
\end{align*}

Finally let $\mathcal{T}_{n-1}^{k }:\mathcal{B}_b\left( \mathbb{R}^{d(k+1)}\right)\rightarrow  \mathcal{B}_b\left( \mathbb{R}^{d(\phi_{{n-1}}\left(k \right) +1)}\right)
$ defined by 
\begin{align*}
\mathcal{T}_{n-1}^{k }h\left(x_0,\cdots,x_{\phi_{{n-1}}\left(k \right)}\right)  = \sum\limits_{ s_k = 0}^{\phi_{{n-1}}(k)} T_{n-1}^{k\rightarrow s_k}\left( P^{\otimes (k+1)}_nh\right) \left(x_0,x_1,\cdots,x_{s_k}\right) .
\end{align*}

\begin{teo}\label{finalresults} Let us assume Conjecture \ref{conj1} and Conjecture \ref{conj2}. Then  $\forall n\geq 1$ and $\forall f\in \mathcal{B}\left(\mathbb{R}^{d} \right) $, $V_{n}^M\left(f \right) $ converges as $M$ goes to infinity and we denote its limit by $V_{n}\left(f \right)$. Moreover, by defining $\phi_{m:n}\left(0 \right) := \phi_{m}\left(\phi_{{m+1}}\left(\cdots \left( \phi_{{n}}\left( 0\right)\right) \right) \right) $ for  $m\leq n$, $m,n\in \mathbb{N}$, $V_{n}\left(f \right)$ is defined by the following recursive formula:

	\begin{align*}
	 V_{n+1}\left(f \right) &= \frac{V_{n}\left(P_nf_n\right)}{\left( \bar{\eta}_n^0\left(g_n\right)\right) ^4 }+\frac{\bar{\eta}_{n-1}^0\left(g_{n-1}\left( P_n f_n^2-\left(P_n f_n \right)^2 \right) \right) }{\left( \bar{\eta}_n^0\left(g_n\right)\right)^4\bar{\eta}_{n-1}^0\left(g_{n-1}\right)}\\
	 &\phantom{=}+ \int_{}\mathcal{T}_{0}^{\phi_{1:n}\left(0 \right)}\cdots \mathcal{T}_{n-2}^{\phi_{n-1:n}\left(0 \right)} \mathcal{T}_{n-1}^{\phi_{n:n}\left(0 \right)}\bar{f}_n\left( x\right) \eta^{\otimes \phi_{0:n}\left(0 \right) }\left(dx\right)
	\end{align*}
	
	where
	$\bar{f}_n\in \mathcal{B}_b\left( \mathbb{R}^{d(1+\phi_{{n}}\left(0\right))}\right) $ is given by
	$$\bar{f}_n\left( x_0,\cdots,x_{\phi_{{n}}\left(0\right)}\right) := \sum\limits_{ k = 0}^{\phi_{{n}}\left(0\right) }f\left(x_0 \right)f\left(x_k \right) \int_{0}^{1}\bar\beta_k\left(u,\tilde{g}_{n}\left( x_0 \right) ,\cdots,\tilde{g}_{n}\left( x_k \right) \right) du$$
	with $\bar\beta_k\left(u,y_0,\cdots,y_k \right) = \beta_{0}(u,y_0)1_{\left\lbrace k=0\right\rbrace }- \beta_{1}\left( u,y_0,\sum\limits_{\ell=1}^{k-1}y_{\ell},y_k\right) 1_{\left\lbrace k\neq 0\right\rbrace }  $ where  $\beta_{0}$ and $\beta_{1}$ are respectively defined in (\ref{beta0}) and (\ref{beta1}).
\end{teo}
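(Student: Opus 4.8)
The plan is to argue by induction on $n$, the base case $n=1$ being Theorem \ref{risultato_principale} applied with $g=g_0$ (since $X_0^{M,m}$ are i.i.d.\ $\eta$ and $V_1^M(f)=\textrm{Var}(\tfrac{1}{\sqrt M}\sum_m f(Y_1^{M,m}))$). Fix $n\ge 1$ and assume $V_n^M(\varphi)\to V_n(\varphi)$ for every $\varphi\in\mathcal{B}_b(\mathbb{R}^d)$. As in (\ref{cond_variance})--(\ref{splitting}) I would condition on $\mathcal{F}^n$ and split
\[
V_{n+1}^M(f) = \textrm{Var}\Big(\tfrac{1}{\sqrt M}\sum_m \mathbb{E}\big(f(Y_{n+1}^{M,m})\mid \mathcal{F}^n\big)\Big) + \mathbb{E}\Big(\textrm{Var}\big(\tfrac{1}{\sqrt M}\sum_m f(Y_{n+1}^{M,m})\mid \mathcal{F}^n\big)\Big).
\]
By the selection identity (\ref{selection_n}) the first summand equals $\textrm{Var}\big(\sqrt M\,\bar\eta_n^{0,M}(g_n f)/\bar\eta_n^{0,M}(g_n)\big)$, whose limit is delivered directly by Conjecture \ref{conj1}; using the induction hypothesis to pass $V_n^M(P_nf_n)\to V_n(P_nf_n)$, this produces exactly the first two terms of the claimed recursion.

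For the second summand I would first note that the computation behind Proposition \ref{rewriting_term} uses only the stratified structure (\ref{defYn}) and the conditional independence of the $Y_{n+1}^{M,m}$ given $\mathcal{F}^n$, hence it applies verbatim with $(X_i,u_i^M,w_i^M)$ replaced by $(X_n^{M,i},u_n^{M,i-1},w_n^{M,i})$. Taking expectations gives
\[
\mathbb{E}\Big(\textrm{Var}\big(\cdots\mid \mathcal{F}^n\big)\Big) = \sum_{k=0}^{\lceil \bar g_n/\underline g_n\rceil}\mathbb{E}\Big(\tfrac{1}{M}\sum_i f(X_n^{M,i})f(X_n^{M,i+k})\,\beta_{(k)}\big(u_n^{M,i-1},w_n^{M,i},\dots,w_n^{M,i+k}\big)\Big),
\]
with $\beta_{(0)}=\beta_0$ and $\beta_{(k)}=-\beta_1$ for $k\ge1$. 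Here Conjecture \ref{conj2} is the tool that asymptotically replaces the fractional part $u_n^{M,i-1}$ by an independent uniform on $(0,1)$ and each weight by $\tilde g_n$ of the corresponding particle. After this substitution each term becomes an average of a fixed bounded window function against the empirical measure $\bar\eta_n^{k,M}$ of consecutive $(k+1)$-tuples, and summing over $k$ collects exactly the window function $\bar f_n$.

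The heart of the argument is then to compute $\lim_M \mathbb{E}\big(\bar\eta_n^{k,M}(\Phi)\big)$ for bounded window functions $\Phi$ and to identify it with $\int \mathcal{T}_0^{\phi_{1:n}(0)}\cdots\mathcal{T}_{n-1}^{\phi_{n:n}(0)}\Phi\,d\eta^{\otimes}$. I would do this by tracing the genealogy one step at a time. A window $X_n^{M,i},\dots,X_n^{M,i+k}$ of consecutive particles is obtained by mutation from the corresponding $Y_n$-window, so un-mutation amounts to integration against $P_n^{\otimes(k+1)}$; and, because stratified resampling preserves the order of the particles, this $Y_n$-window is produced from a \emph{contiguous} block of $X_{n-1}$-particles of length at most $\phi_{n-1}(k)+1$, the probability of each index pattern $0\le s_1\le\cdots\le s_k$ being read off from (\ref{defYn}) and encoded by $\psi_{s_1:s_k}$, with the fractional-part-to-uniform replacement justified by a further application of Conjecture \ref{conj2} at level $n-1$. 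This reduction is precisely $\mathcal{T}_{n-1}^k$. Iterating down to step $0$, where the $X_0^{M,m}$ are i.i.d.\ $\eta$ and the window average converges by the law of large numbers, yields the composition $\mathcal{T}_0^{\phi_{1:n}(0)}\cdots\mathcal{T}_{n-1}^{\phi_{n:n}(0)}$ and the integral against $\eta^{\otimes \phi_{0:n}(0)}$, the window sizes growing exactly along the $\phi$-iteration because un-selecting a window of size $k+1$ produces one of size at most $\phi_{n-1}(k)+1$.

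The hard part will be this last genealogical reduction: establishing rigorously, and uniformly in $M$, that un-selecting a window of consecutive resampled particles yields an average over a contiguous block of pre-selection particles carrying the combinatorial weights $\psi_{s_1:s_k}$, and that the errors introduced by Conjecture \ref{conj2} at each of the successive selection steps, by the $P_n$-integrations, and by the empirical-measure convergence all accumulate to zero. Controlling the boundary effects of the windows (the discrepancy between the $\sum_{i=1}^{M-k}$ and $\sum_{i=1}^{M}$ normalizations, and the rounding in $\lceil\alpha M\rceil$) and checking that every function produced along the recursion remains bounded and continuous, so that Conjecture \ref{conj2} and Lebesgue's theorem apply at each level, is where the bulk of the technical care lies.
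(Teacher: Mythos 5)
Your proposal is correct and follows essentially the same route as the paper: the same conditioning split with Conjecture 1 handling the first term, the same adaptation of Proposition \ref{rewriting_term} plus Conjecture 2 for the conditional variance, and the same genealogical reduction (un-mutation by $P_n^{\otimes(k+1)}$, un-selection of a consecutive window into a contiguous pre-selection block weighted by $\psi_{s_1:s_k}$, with window sizes growing along the $\phi$-iteration) that the paper isolates as Lemma \ref{ricorrenza} and Proposition \ref{mainconv}. The technical points you flag as the hard part — order preservation of the stratified map, boundary effects between the $\sum_{i=1}^{M-k}$ and $\sum_{i=1}^{M}$ normalizations, and boundedness of the functions produced along the recursion — are exactly the ones the paper addresses.
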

\begin{remark}
	Let us observe that by Theorem \ref{risultato_principale}, $\lim\limits_{M\rightarrow \infty}V_{1}^M\left(h\right)= V_{1}\left(h\right) = \sigma^2_1(h)+\sigma^2_2(h)$ $\forall h \in \mathcal{B}_b\left(\mathbb{R}^d \right) $.
\end{remark}
The proof of Theorem \ref{finalresults} relies on the following proposition the proof of which is provided after the proof of the theorem.
\begin{prop}\label{mainconv}  Let us assume Conjecture \ref{conj2}. Given $n\geq 1$ and $h\in \mathcal{B}_b\left(\mathbb{R}^{d\left( \phi_{n}(0)+1\right) } \right)$ one has
	\begin{align*}
		\left|\mathbb{E}\left(\bar\eta_{n}^{\phi_{n}(0),M}(h) \right)- \int_{}\mathcal{T}_{0}^{\phi_{1:n}\left(0 \right)}\cdots \mathcal{T}_{n-2}^{\phi_{n-1:n}\left(0 \right)} \mathcal{T}_{n-1}^{\phi_{n:n}\left(0 \right)}h\left( x\right) \eta^{\otimes \phi_{0:n}\left(0 \right) }\left(dx\right)  \right| \underset{M\rightarrow \infty}{\longrightarrow} 0.	\end{align*}
\end{prop}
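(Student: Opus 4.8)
The plan is to prove, by downward induction on the time index, a one-step recursion that peels off a single selection–mutation transition at a time and reduces $\mathbb{E}\big(\bar\eta_n^{\phi_n(0),M}(h)\big)$ to an expectation at level $0$, where the particles are i.i.d. To this end I would first state the natural generalization of the proposition to an arbitrary block size: for every $n\ge 1$, every $k\ge 0$ and every $H\in\mathcal{B}_b(\mathbb{R}^{d(k+1)})$,
\begin{equation*}
\lim_{M\to\infty}\mathbb{E}\big(\bar\eta_n^{k,M}(H)\big)=\int \mathcal{T}_0^{\phi_{1:n}(k)}\cdots\mathcal{T}_{n-2}^{\phi_{n-1:n}(k)}\mathcal{T}_{n-1}^{k}H(x)\,\eta^{\otimes\phi_{0:n}(k)}(dx),
\end{equation*}
the stated proposition being the case $k=\phi_n(0)$. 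The whole argument rests on the single-step identity $\mathbb{E}(\bar\eta_n^{k,M}(H))=\mathbb{E}(\bar\eta_{n-1}^{\phi_{n-1}(k),M}(\mathcal{T}_{n-1}^k H))+o(1)$, which I establish in two moves.

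First I would handle the mutation. Since the $X_n^{M,m}$ are conditionally independent given $\mathcal{G}^n$ with law $P_n(Y_n^{M,m},\cdot)$, exactly as in (\ref{law}) one gets $\mathbb{E}(H(X_n^{M,i},\dots,X_n^{M,i+k})\mid\mathcal{G}^n)=P_n^{\otimes(k+1)}H(Y_n^{M,i},\dots,Y_n^{M,i+k})$, whence $\mathbb{E}(\bar\eta_n^{k,M}(H))=\mathbb{E}(\widetilde\eta_n^{k,M}(P_n^{\otimes(k+1)}H))$. This replaces the $X_n$-block average by a $Y_n$-block average of $\tilde H:=P_n^{\otimes(k+1)}H$, which is again bounded and measurable.

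Next comes the selection step, the technical heart. Using the definition (\ref{defYn}) of the stratified mechanism at level $n-1$ (with weights $g_{n-1}$ and offsets $u_{n-1}^{M,\cdot}$, $w_{n-1}^{M,\cdot}$), I would express a block of $k+1$ consecutive selected particles $(Y_n^{M,i},\dots,Y_n^{M,i+k})$ as a block $(X_{n-1}^{M,\ell},X_{n-1}^{M,\ell+s_1},\dots,X_{n-1}^{M,\ell+s_k})$ of $X_{n-1}$-particles indexed by a non-decreasing pattern $0\le s_1\le\cdots\le s_k\le\phi_{n-1}(k)$; this is the multi-index analogue of Lemma \ref{cond_expectation} and of the $\rho_{i,k}^M,\tilde\rho_{i,k}^M$ computation in the proof of Proposition \ref{rewriting_term}, and it produces, for each pattern, a deterministic combinatorial weight involving $u_{n-1}^{M,\ell-1}$ and the $w_{n-1}^{M,\ell+\cdot}$. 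Summing over $i$ (equivalently over the starting index $\ell$) and recognizing this weight as the discrete counterpart of $\psi_{s_1:s_k}$, I would invoke Conjecture \ref{conj2} (applied at level $n-1$ with $t=\phi_{n-1}(k)$) to replace $u_{n-1}^{M,\ell-1}$ by an independent uniform $u\in[0,1]$ that is integrated out and each $w_{n-1}^{M,\cdot}$ by $\tilde g_{n-1}(X_{n-1}^{M,\cdot})$, the boundary indices $i>M-\phi_{n-1}(k)$ contributing only $O(1/M)$. This yields precisely $\mathbb{E}(\widetilde\eta_n^{k,M}(\tilde H))=\mathbb{E}(\bar\eta_{n-1}^{\phi_{n-1}(k),M}(\sum_{s_k}T_{n-1}^{k\to s_k}\tilde H))+o(1)=\mathbb{E}(\bar\eta_{n-1}^{\phi_{n-1}(k),M}(\mathcal{T}_{n-1}^k H))+o(1)$, the announced recursion.

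Finally I would iterate this recursion from level $n$ down to level $0$, using at each stage that every $\mathcal{T}_m$ maps $\mathcal{B}_b$ into $\mathcal{B}_b$, so the test function stays bounded, and collecting the finitely many ($n$ in all) $o(1)$ error terms, whose sum tends to $0$. At level $0$ the particles $X_0^{M,m}$ are i.i.d.\ $\eta$, so for the final bounded function $\mathcal{H}:=\mathcal{T}_0^{\phi_{1:n}(k)}\cdots\mathcal{T}_{n-1}^k H$ one has $\mathbb{E}(\bar\eta_0^{\phi_{0:n}(k),M}(\mathcal{H}))\to\eta^{\otimes\phi_{0:n}(k)}(\mathcal{H})$, which is the claimed limit (the case $k=\phi_n(0)$ giving the proposition). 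The main obstacle is the selection step: converting the exact but intricate combinatorics of which $X_{n-1}$-particles are selected into the clean operator $\mathcal{T}_{n-1}^k$ requires both the explicit multi-index bookkeeping generalizing Proposition \ref{rewriting_term} and the asymptotic decoupling of the offsets from the normalized weights furnished by Conjecture \ref{conj2}.
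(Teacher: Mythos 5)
Your proposal is correct and follows essentially the same route as the paper: the generalized statement for arbitrary block size $k$ with the one-step recursion $\mathbb{E}(\bar\eta_{n+1}^{k,M}(h))=\mathbb{E}(\bar\eta_{n}^{\phi_{n}(k),M}(\mathcal{T}_{n}^{k}h))+o(1)$ is exactly Lemma \ref{ricorrenza}, proved there by the same two moves (conditioning on $\mathcal{G}^{n+1}$ for the mutation, then the non-decreasing multi-index decomposition of the selected block plus Conjecture \ref{conj2} for the selection). The only cosmetic difference is that the paper applies Conjecture \ref{conj2} with $t=s_k$ separately for each pattern rather than once with $t=\phi_{n-1}(k)$, which changes nothing.
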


\begin{proof}[Proof of Theorem \ref{finalresults}]
Similarly to what we have done in (\ref{cond_variance}) and (\ref{splitting}), we can rewrite the variance in the following way
\begin{align}
 V_{n+1}^M\left(f \right) & = \textrm{Var}\left(\dfrac{1}{\sqrt{M}}\sum_{m=1}^{M}f(Y_{n+1}^{M,m})\right)\nonumber \\
&= \textrm{Var}\left( \mathbb{E}\left(\dfrac{1}{\sqrt{M}}\sum_{m=1}^{M}f(Y_{n+1}^{M,m})\mathrel{\Big|}\mathcal{F}^{n} \right)\right)+\mathbb{E}\left( \textrm{Var}\left(\dfrac{1}{\sqrt{M}}\sum_{m=1}^{M}f(Y_{n+1}^{M,m}) \mathrel{\Big|}\mathcal{F}^{n}\right)\right)\nonumber\\
&=\textrm{Var}\left( \sqrt{M}\dfrac{\sum_{m=1}^{M}g_n(X_n^{M,m})f(X_n^{M,m})}{\sum_{m=1}^{M}g_n({X_n^{M,m}})} \right)+\mathbb{E}\left( \textrm{Var}\left(\dfrac{1}{\sqrt{M}}\sum_{m=1}^{M}f(Y_{n+1}^{M,m}) \mathrel{\Big|}\mathcal{F}^{n}\right)\right) \label{rewriting-stepn}
\end{align}
where to obtain the last equality we use the selection property (\ref{selection_n}).
Conjecture \ref{conj1} gives the asymptotic behavior  of the first term of (\ref{rewriting-stepn}).\\ 
Let us then study the second term of (\ref{rewriting-stepn}).
Using the definition of $Y_{n+1}^{M,m}$ and following the same reasoning done to prove Proposition \ref{rewriting_term}, we can rewrite the expression inside the expectation: for $M\geq 1+\phi_{n}(0)$
\begin{align}
& \textrm{Var}\bigg(\dfrac{1}{\sqrt{M}}\sum_{m=1}^{M}f(Y_{n+1}^{M,m}) \mathrel{\Big|}\mathcal{F}^{n}\bigg) \label{expression_var_n} \\
 &=\frac{1}{M}\sum_{k=0}^{\phi_{n}(0)}\sum\limits_{i=1}^{M-k}f( X_{n}^{M,i})f( X_{n}^{M,i+k})\bar\beta_k\left(u_{n}^{M,i-1},w_{n}^{M,i},\cdots,w_{n}^{M,i+k} \right)   \label{form2}
\end{align}
with $\bar\beta_k\left(u,y_0,\cdots,y_k \right): = \beta_{0}(u,y_0)1_{\left\lbrace k=0\right\rbrace }- \beta_{1}\left( u,y_0,\sum\limits_{\ell=1}^{k-1}y_{\ell},y_k\right) 1_{\left\lbrace k\neq 0\right\rbrace }  $
where we recall that $\beta_{0}$ and $\beta_{1}$ are respectively defined in (\ref{beta0}) and (\ref{beta1}).
 Given $k=0,\cdots,\phi_{n}(0)$, we can apply Conjecture \ref{conj2} with $t=k$, $h\left(x_0,\cdots,x_k \right)= f\left(x_0\right)f\left(x_k\right)$ and $\psi\left(u,y_0,\cdots,y_k \right)= \bar\beta_k\left(u,y_0,\cdots,y_k \right)$ and obtain that 
 \begin{align}
 	&\left| \mathbb{E}\left( \textrm{Var}\left(\dfrac{1}{\sqrt{M}}\sum_{m=1}^{M}f(Y_{n+1}^{M,m}) \mathrel{\Big|}\mathcal{F}^{n}\right)\right) -\sum_{k=0}^{\phi_{n}(0)}
 	\mathbb{E}\left(\bar{\eta}_n^{k,M} \left( \tilde{f}_k\right) \right)  \right| \underset{M\rightarrow \infty}{\longrightarrow} 0 \label{conv1}
 \end{align}

where for $k=0,\cdots,\phi_{n}(0)$, $\tilde{f}_k\left(x_0,\cdots,x_k \right)= f\left(x_0 \right)f\left(x_k \right) \int_{0}^{1}\bar\beta_k\left(u,\tilde{g}_{n}\left( x_0 \right) ,\cdots,\tilde{g}_{n}\left( x_k \right) \right) du$. \\
We now observe that by defining $\bar{f}_n\left(x_0,\cdots,x_{\phi_{n}(0)} \right) = \sum\limits_{k=0}^{\phi_{n}(0)} \tilde{f}_k\left(x_0,\cdots,x_k \right) $, one has

\begin{align*}
	&\left|\sum_{k=0}^{\phi_{n}(0)}
	\bar{\eta}_n^{k,M} \left( \tilde{f}_k\right)  - \bar{\eta}_n^{\phi_{n}(0),M} \left(\bar{f}_n\right)  \right|\\
	&= \frac{1}{M} \left|\sum_{k=0}^{\phi_{n}(0)}\left(\sum\limits_{ i = 1}^{M-k}\tilde{f}_k\left(X_n^{M,i},\cdots,X_n^{M,i+k} \right) -\sum\limits_{ i = 1}^{M-\phi_{n}(0)}\tilde{f}_k\left(X_n^{M,i},\cdots,X_n^{M,i+k} \right)  \right)  \right| \\
	&\leq \frac{C}{M}\sum_{k=0}^{\phi_{n}(0)}\left(\phi_{n}(0)-k \right) \underset{M\rightarrow \infty}{\longrightarrow} 0
\end{align*}
for a finite constant $C$. Thus  we can combine it with (\ref{conv1}) and  obtain that 
\begin{align*}
&\left| \mathbb{E}\left( \textrm{Var}\left(\dfrac{1}{\sqrt{M}}\sum_{m=1}^{M}f(Y_{n+1}^{M,m}) \mathrel{\Big|}\mathcal{F}^{n}\right)\right) -
\mathbb{E}\left(\bar{\eta}_n^{\phi_{n}(0),M} \left(\bar{f}_n\right) \right)  \right| \underset{M\rightarrow \infty}{\longrightarrow} 0 .
\end{align*}
We can now apply Proposition \ref{mainconv} with $h=\bar{f}_n $ and obtain that 
\begin{align*}
	\left| \mathbb{E}\left(\bar{\eta}_n^{\phi_{n}(0),M} \left(\bar{f}_n\right) \right)  - \int_{}\mathcal{T}_{0}^{\phi_{1:n}\left(0 \right)}\cdots \mathcal{T}_{n-2}^{\phi_{n-1:n}\left(0 \right)} \mathcal{T}_{n-1}^{\phi_{n:n}\left(0 \right)}\bar{f}_n\left( x\right)  \eta^{\otimes \phi_{0:n}\left(0 \right) }\left(dx\right) \right|\underset{M\rightarrow \infty}{\longrightarrow} 0. 
\end{align*}
Thus we have obtained that $\forall n\geq 1$, $\forall f\in \mathcal{B}_b\left(\mathbb{R}^d\right) $
\begin{align*}
	 &\bigg{\lvert}V_{n+1}^M\left(f \right)- \frac{V_{n}^M\left(P_nf_n\right)}{\left( \bar{\eta}_n^0\left(g_n\right)\right) ^4 }-\frac{\bar{\eta}_{n-1}^0\left(g_{n-1}\left( P_n f_n^2-\left(P_n f_n \right)^2 \right) \right) }{\left( \bar{\eta}_n^0\left(g_n\right)\right)^4\bar{\eta}_{n-1}^0\left(g_{n-1}\right)}\\
	 &\phantom{====}- \int_{}\mathcal{T}_{0}^{\phi_{1:n}\left(0 \right)}\cdots \mathcal{T}_{n-2}^{\phi_{n-1:n}\left(0 \right)} \mathcal{T}_{n-1}^{\phi_{n:n}\left(0 \right)}\bar{f}_n\left( x\right)  \eta^{\otimes \phi_{0:n}\left(0 \right) }\left(dx\right) \bigg{\lvert}\underset{M\rightarrow \infty}{\longrightarrow} 0.
\end{align*}
To conclude the proof, it is now sufficient to observe that by Theorem \ref{risultato_principale}, $\forall h \in \mathcal{B}_b\left( \mathbb{R}^d\right) $ $V_{1}^M\left(h\right)$ converges as $M$ goes to infinity  and its limit is given by $V_{1}\left(h\right)=\sigma^2_1(h)+\sigma^2_2(h) $.
\end{proof}

The proof of  Proposition  \ref{mainconv} is a direct consequence of the following Lemma.

\begin{lemma}\label{ricorrenza}
	Let $n\geq 0$ and $ k\geq 0$.  Given $h\in \mathcal{B}_b\left( \mathbb{R}^{d(k+1)} \right)$ one has 	
\begin{align*}
\left|\mathbb{E}\left(\bar\eta_{n+1}^{k,M}(h) \right)- \mathbb{E}\left( \bar\eta_{n}^{\phi_{{n}}(k),M}(\mathcal{T}_{n}^{k }h)\right) \right|\underset{M\rightarrow \infty}{\longrightarrow} 0.
\end{align*}
 
\end{lemma}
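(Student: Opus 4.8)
The plan is to prove this single-step recursion by peeling off, in order, the mutation and then the selection that carry step $n$ into step $n+1$, and to use Conjecture \ref{conj2} only at the very last moment to replace the coupled fractional parts and weights by their decoupled surrogates. First I would integrate out the mutation, which is exact: since the $(X_{n+1}^{M,m})_{1\le m\le M}$ are conditionally independent given $\mathcal{G}^{n+1}$ with $\mathcal{L}(X_{n+1}^{M,j}\mid\mathcal{G}^{n+1})=P_{n+1}(Y_{n+1}^{M,j},\cdot)$ and the $Y_{n+1}^{M,j}$ are $\mathcal{G}^{n+1}$-measurable, the conditional expectation of a consecutive block factorizes,
$$\mathbb{E}\left(h(X_{n+1}^{M,i},\dots,X_{n+1}^{M,i+k})\mid\mathcal{G}^{n+1}\right)=\left(P^{\otimes(k+1)}_{n+1}h\right)(Y_{n+1}^{M,i},\dots,Y_{n+1}^{M,i+k}).$$
By the tower property and summation this yields the exact identity $\mathbb{E}(\bar\eta_{n+1}^{k,M}(h))=\mathbb{E}(\widetilde\eta_{n+1}^{k,M}(\tilde h))$ with $\tilde h:=P^{\otimes(k+1)}_{n+1}h\in\mathcal{B}_b(\mathbb{R}^{d(k+1)})$ still bounded.

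Second, I would carry out the selection bookkeeping, again exactly, by conditioning on $\mathcal{F}^n$ and integrating the independent uniforms $U_n^i,\dots,U_n^{i+k}$. Using the definition \eqref{defYn} and the monotonicity of the index map $m\mapsto\ell(m)$ (consecutive slots select a non-decreasing family of $X_n$ indices, exactly as behind Lemma \ref{cond_expectation}), a block $(Y_{n+1}^{M,i},\dots,Y_{n+1}^{M,i+k})$ equals $(X_n^{M,\ell},X_n^{M,\ell+s_1},\dots,X_n^{M,\ell+s_k})$ for a gap pattern $0=s_0\le s_1\le\dots\le s_k$, and integrating the $U$'s produces precisely the weights $\psi_{s_1:s_k}$ evaluated at the fractional part $u_n^{M,\ell-1}$ and the weights $w_n^{M,\ell},\dots,w_n^{M,\ell+s_k}$, the inner $\sum_i$ in $\psi_{s_1:s_k}$ enumerating the boundedly many slot positions anchored at $\ell$. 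Since $w_n^{M,m}\le\bar g_n/\underline g_n$, any pattern with $s_k>\phi_n(k)$ forces a vanishing weight, so the admissible patterns are finite and reconstitute the operator $T_n^{k\rightarrow s_k}$. Generalizing the case-by-case computation of Proposition \ref{rewriting_term} from $k\in\{0,1\}$ to arbitrary $k$ and reindexing the slot sum $\sum_{i=1}^{M-k}$ as a sum over starting indices $\ell$ gives, up to $O(\phi_n(k)/M)$ boundary terms,
$$\mathbb{E}(\widetilde\eta_{n+1}^{k,M}(\tilde h))=\mathbb{E}\left(\tfrac1M\sum_{\ell}\sum_{s_k=0}^{\phi_n(k)}\sum_{0\le s_1\le\dots\le s_k}\tilde h(X_n^{M,\ell},\dots,X_n^{M,\ell+s_k})\,\psi_{s_1:s_k}(u_n^{M,\ell-1},w_n^{M,\ell},\dots,w_n^{M,\ell+s_k})\right)+o(1).$$

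Third, I would invoke Conjecture \ref{conj2}, applied once for each of the finitely many gap patterns $(s_1,\dots,s_k)$ with $t=\phi_n(k)$, taking the value factor to be $\tilde h$ (read as a function of the full length-$(\phi_n(k)+1)$ block but depending only on coordinates $0,s_1,\dots,s_k$) and the test function to be $\psi_{s_1:s_k}$, which is bounded and continuous in the spirit of Remark \ref{continuitybeta}. This replaces $u_n^{M,\ell-1}$ by $\int_0^1\!du$ and the normalized weights $w_n^{M,\cdot}$ by $\tilde g_n(X_n^{M,\cdot})=g_n(X_n^{M,\cdot})/\bar\eta_n^0(g_n)$ (the a.s.\ limit of $\tfrac1M\sum g_n$ from Remark \ref{stronglaw}), up to an error tending to $0$. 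Summing the finitely many patterns turns $\psi_{s_1:s_k}(u_n^{M,\ell-1},w_n^{M,\cdot})$ into $\int_0^1\psi_{s_1:s_k}(u,\tilde g_n(X_n^{M,\ell}),\dots)\,du$, i.e.\ exactly $T_n^{k\rightarrow s_k}(P^{\otimes(k+1)}_{n+1}h)$ and hence $\mathcal{T}_n^k h$ acting on the $X_n$ blocks. It then remains to absorb the block-range discrepancy — blocks of length $s_k+1\le\phi_n(k)+1$ embedded into the length-$(\phi_n(k)+1)$ blocks of $\bar\eta_n^{\phi_n(k),M}$, together with the shift between $\sum_{\ell=1}^{M-k}$ and $\sum_{\ell=1}^{M-\phi_n(k)}$ — which costs only $O(\phi_n(k)/M)\to0$ by boundedness of $\tilde h$ and of the $\psi_{s_1:s_k}$, arriving at $\mathbb{E}(\bar\eta_n^{\phi_n(k),M}(\mathcal{T}_n^k h))$.

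The main obstacle is the second move: the exact combinatorial identity showing that integrating the uniforms over a window of $k+1$ consecutive stratified selections reproduces precisely the weights $\psi_{s_1:s_k}$, with the inner $\sum_i$ correctly counting the slot positions anchored at a given $X_n$ index. This is the general-$k$ analogue of the delicate, case-by-case analysis underlying Lemma \ref{cond_expectation} and Proposition \ref{rewriting_term}, and matching the resampling geometry to the definition of $T_n^{k\rightarrow s_k}$ is where essentially all the bookkeeping lives; by contrast the first move is exact and the third is a direct application of Conjecture \ref{conj2} with the strong law of Remark \ref{stronglaw}.
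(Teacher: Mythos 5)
Your proposal is correct and follows essentially the same route as the paper's proof: first the exact reduction $\mathbb{E}(\bar\eta_{n+1}^{k,M}(h))=\mathbb{E}(\tilde\eta_{n+1}^{k,M}(P^{\otimes(k+1)}_{n+1}h))$ via the mutation kernel, then the change of variables $m=\ell_0$, $s_q=\ell_q-\ell_0$ with the Freezing Lemma to integrate the uniforms and produce $\psi_{s_1:s_k}$ together with the bounds $s_k\le\phi_n(k)$ and $i\le\lceil 1+\bar g_n/\underline g_n\rceil$, and finally one application of Conjecture \ref{conj2} per gap pattern plus $O(1/M)$ boundary corrections to assemble $\mathcal{T}_n^k h$. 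The only cosmetic difference is that the paper invokes Conjecture \ref{conj2} with $t=s_k$ for each pattern rather than uniformly with $t=\phi_n(k)$, which changes nothing.
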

\begin{proof}[Proof of Lemma \ref{ricorrenza}]
	Since the $X_{n+1}^{M,m}$ are conditionally independent given $\mathcal{G}^{n+1}$ and $\mathcal{L}\left( X_{n+1}^{M,m}\mathrel{|}\mathcal{G}^{n+1} \right)\sim P_{n+1}\left(Y_{n+1}^{M,m},\cdot\right)$,
	 for each bounded measurable function $h:\mathbb{R}^{ d(k+1)}\rightarrow \mathbb{R}$ one has
	
	\begin{align} \label{lawg}
	&\mathbb{E}\left( h(X_{n+1}^{M,i} ,\cdots,X_{n+1}^{M,i+k})\mathrel{|}\mathcal{G}^{n+1} \right) = P^{\otimes (k+1)}_{n+1}h\left(Y_{n+1}^{M,i},\cdots,Y_{n+1}^{M,i+k}\right)\\
	&\phantom{=}:= \int_{\mathbb{R}^{d(k+1)}} h(x_0,\cdots,x_k)P_{n+1}\left(Y_{n+1}^{M,i},dx_0\right)\cdots P_{n+1}\left(Y_{n+1}^{M,i+k},dx_k\right).
	\end{align}
	
	Therefore for $M\geq k+1$

\begin{align}
		&\mathbb{E}\left(\bar\eta_{n+1}^{k,M}(h) \right)= \frac{1}{M}\sum\limits_{i=1}^{M-k}\mathbb{E}\left(h\left(X_{n+1}^{M,i},\cdots,X_{n+1}^{M,i+k}\right) \right) \\
		&= \frac{1}{M}\sum\limits_{i=1}^{M-k}\mathbb{E}\left(P^{\otimes (k+1)}_{n+1}h\left(Y_{n+1}^{M,i},\cdots,Y_{n+1}^{M,i+k}\right)\right) = \mathbb{E}\left(\tilde\eta_{n+1}^{k,M}(P^{\otimes (k+1)}_{n+1}h) \right). \label{from-mut-to-sel}
\end{align}

Thus our purpose now becomes to study the asymptotic behaviour of $\mathbb{E}\left(\tilde\eta_{n+1}^{k,M}(f) \right)$ for a given $f\in \mathcal{B}_b\left(\mathbb{R}^{d(k+1)} \right)$. 
 	Recalling the definition $\left(\ref{defYn}\right)$ of $Y_{n+1}^{M,m}$, let us observe that if we denote by $\ell_{m}$ the random index in $\left\lbrace 1,\cdots,M\right\rbrace $ such that $1_{\left\lbrace \sum\limits_{j=1}^{\ell_m-1}w_{n}^{M,j}<m-U_{n}^{m}\leq\sum\limits_{j=1}^{\ell_m}w_{n}^{M,j} \right\rbrace}=1$ so that $Y_{n+1}^{M,m} = X_{n}^{M,\ell_{m}}$, one has that $m\mapsto\ell_{m}$ is non decreasing.\\
Therefore given $k\geq 0$, $M\geq k+1$ and $1\leq i \leq M-k$ one has:

\begin{align*}
&\left( Y_{n+1}^{M,i},\cdots,Y_{n+1}^{M,i+k}\right)\\
&=\sum_{\ell_0=1}^{M}\cdots\sum_{\ell_{k}=\ell_{k-1}}^{M} \left( X_{n}^{M,\ell_0}, \cdots,X_{n}^{M,\ell_{k}} \right) \prod_{q=0}^{k}1_{\left\lbrace \sum\limits_{j=1}^{\ell_q-1}w_{n}^{M,j}<i+q-U_{n}^{i+q}\leq\sum\limits_{j=1}^{\ell_q}w_{n}^{M,j} \right\rbrace}.
\end{align*}
We can now apply the following change of variables $m=\ell_{0},s_1=\ell_{1}-\ell_{0},s_2= \ell_{2}-\ell_{0},\cdots,s_k =\ell_{k}-\ell_{0}$ and  set $s_0:= 0$ so that the above expression becomes
\begin{align*}
 \sum_{0\leq s_k\leq M-1}\sum_{0\leq s_1\leq s_2\leq\cdots \leq s_k}
\sum_{m=1}^{M-s_{k}} \left( X_{n}^{M,m},X_{n}^{M,m+s_{1}}, \cdots,X_{n}^{M,m+s_{k}} \right) \prod_{q=0}^{k} 1_{\left\lbrace \sum\limits_{j=1}^{m+s_{q}-1}w_{n}^{M,j}<i+q-U_{n}^{i+q}\leq\sum\limits_{j=1}^{m+s_{q}}w_{n}^{M,j} \right\rbrace}.
\end{align*}

	Therefore one has

\begin{align*}
\mathbb{E}\left(\tilde\eta_{n+1}^{k,M}(f) \right) &= \frac{1}{M}\sum\limits_{i=1}^{M-k}\mathbb{E}\left( f\left( Y_{n+1}^{M,i},\cdots,Y_{n+1}^{M,i+k}\right)\right) = \frac{1}{M}\sum\limits_{i=1}^{M-k}\mathbb{E}\left(\mathbb{E}\left( f\left( Y_{n+1}^{M,i},\cdots,Y_{n+1}^{M,i+k}\right)\mid \mathcal{F}^{n}\right)\right) \\
&	=\frac{1}{M}\sum_{0\leq s_k\leq M-1}\sum_{0\leq s_1\leq s_2\leq\cdots \leq s_k}
\sum_{m=1}^{M-s_{k}}\mathbb{E}\Bigg( f\left( X_{n}^{M,m},X_{n}^{M,m+s_{1}}, \cdots,X_{n}^{M,m+s_{k}} \right)\\
&\phantom{====================}\times\sum\limits_{i=1}^{M-k} \prod_{q=0}^{k}  \int_{i+q-1}^{i+q}1_{\left\lbrace \sum\limits_{j=1}^{m+s_{q}-1}w_{n}^{M,j}<u\leq\sum\limits_{j=1}^{m+s_{q}}w_{n}^{M,j} \right\rbrace}du \Bigg)
\end{align*}
where to obtain the last equality we apply the Freezing Lemma and the fact that the sequence $\left(U_{n}^{m} \right)_{1\leq m \leq M}$ is independent of $\mathcal{F}^{n}$ and the sequence $\left(X_{n}^{M,m} \right)_{1\leq m \leq M}$ is $\mathcal{F}^{n}$ - measurable.\\
Given $k \geq 0$, $M\geq k+1$, $0\leq s_k\leq M-1$, $0\leq s_1\leq s_2\leq\cdots \leq s_k$ and $1\leq m\leq M-s_{k}$, let us now focus on the sum over $i$ appearing in the above expression:
\begin{align*}
\sum\limits_{i=1}^{M-k} \prod_{q=0}^{k}  \int_{i+q-1}^{i+q}1_{\left\lbrace \sum\limits_{j=1}^{m+s_{q}-1}w_{n}^{M,j}<u\leq\sum\limits_{j=1}^{m+s_{q}}w_{n}^{M,j} \right\rbrace}du.
\end{align*}
Let us first observe that if the quantity inside the sum is different from zero, then in particular\\ $\int_{i-1}^{i}1_{\left\lbrace \sum\limits_{j=1}^{m-1}w_{n}^{M,j}<u\leq\sum\limits_{j=1}^{m}w_{n}^{M,j} \right\rbrace}du\neq 0$ and $\int_{i+k-1}^{i+k}1_{\left\lbrace \sum\limits_{j=1}^{m+s_{k}-1}w_{n}^{M,j}<u\leq\sum\limits_{j=1}^{m+s_{k}}w_{n}^{M,j} \right\rbrace}du\neq 0$. This  implies that $\sum\limits_{j=1}^{m}w_{n}^{M,j}>i-1$ and $\sum\limits_{j=1}^{m+s_k-1}w_{n}^{M,j}<i+k$. Thus

\begin{align*}
\dfrac{\underline{g}_{n}}{\bar{g}_{n}}\left(s_k-1 \right) 	\leq\sum\limits_{j=m+1}^{m+s_k-1}w_{n}^{M,j}= \sum\limits_{j=1}^{m+s_k-1}w_{n}^{M,j}-\sum\limits_{j=1}^{m}w_{n}^{M,j}<i+k - i+1=k+1
\end{align*}
and so we have obtained an upper bound for $s_k$: $s_k \leq \left\lceil \dfrac{\bar{g}_{n}}{\underline{g}_{n}}\left(k+1 \right)\right\rceil=\phi_{{n}}(k)$ where we use the notation introduced in (\ref{deffunzionen}).  \\
Moreover we observe that we can replace the finite sum over $i$ with an infinite sum: if $i\geq M-k+1$ $$\int_{i+k-1}^{i+k}1_{\left\lbrace \sum\limits_{j=1}^{m+s_{k}-1}w_{n}^{M,j}<u\leq\sum\limits_{j=1}^{m+s_{k}}w_{n}^{M,j} \right\rbrace}du=0$$ 
since $\sum\limits_{j=1}^{m+s_{k}}w_{n}^{M,j}\leq\sum\limits_{j=1}^{M}w_{n}^{M,j} =  M$. \\
Therefore after all one has 

\begin{align*}
\sum\limits_{i=1}^{M-k}\int_{i-1}^{i}& \prod_{q=0}^{k}  \int_{i+q-1}^{i+q}1_{\left\lbrace \sum\limits_{j=1}^{m+s_{q}-1}w_{n}^{M,j}<u\leq\sum\limits_{j=1}^{m+s_{q}}w_{n}^{M,j} \right\rbrace}du\\ &=1_{\left\lbrace s_k \leq \phi_{{n}}(k)\right\rbrace } \sum\limits_{i\geq 1} \prod_{q=1}^{k}  \int_{i+q-1}^{i+q}1_{\left\lbrace \sum\limits_{j=1}^{m+s_{q}-1}w_{n}^{M,j}<u\leq\sum\limits_{j=1}^{m+s_{q}}w_{n}^{M,j} \right\rbrace}du.
\end{align*}

We can now apply the change of variable $u:= u'+ \left\lfloor \sum\limits_{j=1}^{m-1}w_{n}^{M,j}\right \rfloor  $ in each of the above integrals so to obtain

\begin{align}\nonumber
&\sum\limits_{i\geq 1} \prod_{q=0}^{k}  \int_{i+q-1}^{i+q}1_{\left\lbrace \sum\limits_{j=1}^{m+s_{q}-1}w_{n}^{M,j}<u\leq\sum\limits_{j=1}^{m+s_{q}}w_{n}^{M,j} \right\rbrace}du\\\nonumber
& = \sum\limits_{i\geq 1} \prod_{q=0}^{k}  \int_{i+q-1-\left\lfloor \sum\limits_{j=1}^{m-1}w_{n}^{M,j}\right \rfloor}^{i+q-\left\lfloor \sum\limits_{j=1}^{m-1}w_{n}^{M,j}\right \rfloor}1_{\left\lbrace u_{n}^{M,m-1}+\sum\limits_{j=m}^{m+s_{q}-1}w_{n}^{M,j} <u'\leq u_{n}^{M,m-1}+\sum\limits_{j=m}^{m+s_{q}}w_{n}^{M,j} \right\rbrace}du'\\ \label{qqq}
&= \sum\limits_{i\geq 1} \prod_{q=0}^{k}  \int_{i+q-1}^{i+q}1_{\left\lbrace u_{n}^{M,m-1}+\sum\limits_{j=m}^{m+s_{q}-1}w_{n}^{M,j} <u'\leq u_{n}^{M,m-1}+\sum\limits_{j=m}^{m+s_{q}}w_{n}^{M,j} \right\rbrace}du'
\end{align}
where to obtain the last equality we use the fact that if $i< 1$, $\int_{i-1}^{i}1_{\left\lbrace u_{n}^{M,m-1}+\sum\limits_{j=m}^{m-1}w_{n}^{M,j} <u'\leq u_{n}^{M,m-1}+\sum\limits_{j=m}^{m}w_{n}^{M,j} \right\rbrace}du'=0$. We observe that if the quantity inside the sum is different from zero, then in particular $$\int_{i-1}^{i}1_{\left\lbrace u_{n}^{M,m-1} <u'\leq u_{n}^{M,m-1}+w_{n}^{M,m} \right\rbrace}du' \neq 0.$$
	This implies that $i-1<u_{n}^{M,m-1}+w_{n}^{M,m} \leq 1+\dfrac{\bar{g}_{n}}{\underline{g}_{n}}$ and so $i\leq \left\lceil 1+\dfrac{\bar{g}_{n}}{\underline{g}_{n}}\right\rceil$. In conclusion, (\ref{qqq}) can be rewritten as 
\begin{align}\label{qqq2}
\sum\limits_{i=1}^{\left\lceil 1+\bar{g}_{n} /\underline{g}_{n}\right\rceil} \prod_{q=0}^{k}  \int_{i+q-1}^{i+q}1_{\left\lbrace u_{n}^{M,m-1}+\sum\limits_{j=m}^{m+s_{q}-1}w_{n}^{M,j} <u'\leq u_{n}^{M,m-1}+\sum\limits_{j=m}^{m+s_{q}}w_{n}^{M,j} \right\rbrace}du'.
\end{align}

We denote (\ref{qqq2}) by $ \psi_{s_{1}:s_{k}}\left(u_{n}^{M,m-1},w_{n}^{M,m},w_{n}^{M,m+1},\cdots ,w_{n}^{M,m+s_{k}} \right)$ 
where $\psi_{s_{1}:s_{k}}:\mathbb{R}^{s_k+2}\rightarrow \mathbb{R}$.\\
Therefore if $M\geq \max\left(1+k,1+\phi_{{n}}(k) \right) $ we have obtained that 

\begin{align}
&\mathbb{E}\left(\tilde\eta_{n+1}^{k,M}(f) \right)\label{objective1}\\
&=\sum_{ s_k = 0}^{\phi_{{n}}(k)}
\sum_{0\leq s_1\leq s_2\leq\cdots \leq s_k} \frac{1}{M}\sum_{m=1}^{M-s_{k}}\mathbb{E}\Bigg( f\left( X_{n}^{M,m},X_{n}^{M,m+s_{1}}, \cdots,X_{n}^{M,m+s_{k}} \right)\label{firstpart}\\
&\phantom{===========}\times\psi_{s_{1}:s_{k}}\left(u_{n}^{M,m-1},w_{n}^{M,m},w_{n}^{M,m+1},\cdots ,w_{n}^{M,m+s_{k}} \right)\Bigg). \label{secondpart}
\end{align}

We can therefore apply Conjecture \ref{conj2} with $t=s_k$, $h\left(x_0,x_1,\cdots,x_{s_k} \right)= f\left(x_0,x_{s_1},x_{s_2},\cdots,x_{s_k} \right)$, $\psi\left(u,y_0,\cdots,y_{s_k} \right) = \psi_{s_{1}:s_{k}}\left(u,y_0,\cdots,y_{s_k}\right)$.

\begin{align}
\left| \mathbb{E}\left(\tilde\eta_{n+1}^{k,M}(f) \right) -\sum_{ s_{k} = 0}^{\phi_{{n}}(k)} \mathbb{E}\left( \bar\eta_{n}^{s_{k},M}(T_{n}^{k\rightarrow s_k }f)\right)\right|  \underset{M\rightarrow \infty}{\longrightarrow} 0 \label{finalstep}
\end{align}
where $T_{n}^{k\rightarrow s_k }:\mathcal{B}_b\left(\mathbb{R}^{d(k+1)} \right) \rightarrow \mathcal{B}_b\left(\mathbb{R}^{d(s_k+1)} \right) $ is defined by
 \begin{align*}
 	 &T_{n}^{k\rightarrow s_k }f\left(x_0,x_1,\cdots,x_{s_{k}} \right)=\sum_{0\leq s_1\leq s_2\leq\cdots \leq s_{k}} f\left( x_0,x_{s_1}, \cdots,x_{s_{k}} \right)\int_{0}^{1}\psi_{s_{1}:s_{k}}\left(u,\tilde{g}_{n}(x_{0}),\tilde{g}_{n}(x_{1}),\cdots ,\tilde{g}_{n}(x_{s_k})\right)du
 \end{align*}
 and where given $0\leq s_k \leq \phi_{{n}}(k)$ and $0\leq s_1\leq s_2\leq\cdots \leq s_k$:
 \begin{align*}
 	&\psi_{s_{1}:s_{k}}\left(u,y_{0},y_{1},\cdots,y_{s_k}\right) =\sum\limits_{i=1}^{\left\lceil 1+\bar{g}_{n} /\underline{g}_{n}\right\rceil}\prod_{q=0}^{k}  \int_{i+q-1}^{i+q}1_{\left\lbrace u+\sum\limits_{j=0}^{s_{q}-1}y_{j} <u'\leq u+\sum\limits_{j=0}^{s_{q}}y_{j} \right\rbrace}du'.
 \end{align*}
We now observe that by defining $T_{n}^{k}:\mathcal{B}_b\left(\mathbb{R}^{d(k+1)} \right)\rightarrow \mathcal{B}_b\left(\mathbb{R}^{d(\phi_{{n}}(k)+1)} \right)$  by  
\begin{align*}
	T_{n}^{k}f\left(x_0, \cdots, x_{\phi_{{n}}(k)} \right)  =\sum\limits_{ s_k = 0}^{\phi_{{n}}(k)} T_{n}^{k\rightarrow s_k }f\left(x_0,\cdots,x_{s_k} \right),
\end{align*} one has
\begin{align}\nonumber
&	\left|\sum_{ s_{k} = 0}^{\phi_{{n}}(k)}  \bar\eta_{n}^{s_{k},M}(T_{n}^{k\rightarrow s_k }f) -  \bar\eta_{n}^{\phi_{{n}}(k),M}(T_{n}^{k }f)\right|\\\nonumber
	&=\frac{1}{M}\left|\sum_{ s_{k} = 0}^{\phi_{{n}}(k)}\left(\sum_{i=1}^{M-s_k}\left( T_{n}^{k\rightarrow s_k }f\right)\left( X_{n}^{M,i},\cdots,X_{n}^{M,i+s_k}\right) -  \sum_{i=1}^{M-\phi_{{n}}(k)}\left( T_{n}^{k\rightarrow s_k }f\right)\left( X_{n}^{M,i},\cdots,X_{n}^{M,i+s_k}\right) \right) \right| \\\nonumber
	&\leq  \frac{1}{M}\sum_{ s_{k} = 0}^{\phi_{{n}}(k)}\left\| T_{n}^{k\rightarrow s_k }f\right\|_{\infty}  \left( \phi_{{n}}(k) - s_k\right)  \underset{M\rightarrow \infty}{\longrightarrow} 0. 
\end{align}
 Combining the above estimate with (\ref{finalstep}) we obtain that 
\begin{align}
	\left| \mathbb{E}\left(\tilde\eta_{n+1}^{k,M}(f) \right) -  \mathbb{E}\left( \bar\eta_{n}^{\phi_{{n}}(k),M}(T_{n}^{k }f)\right)\right| \underset{M\rightarrow \infty}{\longrightarrow} 0. \label{finalstepfin}
\end{align}
 
In conclusion, combining (\ref{from-mut-to-sel}) with (\ref{finalstepfin}) by taking $f=P^{\otimes (k+1)}_{n+1}h $, we obtain  that

\begin{align*}
		\left|\mathbb{E}\left(\bar\eta_{n+1}^{k,M}(h) \right)- \mathbb{E}\left( \bar\eta_{n}^{\phi_{{n}}(k),M}(\mathcal{T}_{n}^{k }h)\right) \right|\underset{M\rightarrow \infty}{\longrightarrow} 0 
\end{align*}
where $\mathcal{T}_{n}^{k } = T_{n}^{k }P^{\otimes (k+1)}_{n+1}$
and this concludes the proof.
\end{proof}

\section{Numerical Results}\label{numericalresults}
In what follows we fix $d=1$ and given a sequence $\left(W_n \right)_{n\geq 1} $ of independent real-valued random variables distributed according to the uniform law on $\left(0,1\right)$, let  $Z_{n+1}=Z_n+W_{n+1}$ for $n\geq 0$ with  $Z_0$ distributed according to the uniform law on $\left(0,1\right)$ $\left(\eta(dx) = 1_{\left[ 0,1\right]  }\left(x\right)dx \right)$. Thus in this case the transition kernel is given by $P(x,dy)=1_{\left[x,x+1 \right] }(y)dy$. Moreover we will fix $g_n \left(x \right) = f\left(x \right) = e^x$ $\forall n\geq 0$, $\forall x\in \mathbb{R}$.\\

In this section, we are first going to numerically verify the Conjecture \ref{conj1} in the case $n=1$ and the Conjecture \ref{conj2} in the case $n=1$ and $n=2$.\\
In the second place, we will test the two conjectures together  by directly studying the asymptotic variance. As done in the theory, we study numerically the asymptotic behaviour of following expression:

\begin{align}
	V_{n+1}^M\left(f\right) = \textrm{Var}\left( \sqrt{M}\dfrac{\bar{\eta}^{0,M}_{n}\left(g_nf \right)}{\bar{\eta}^{0,M}_{n}\left(g_n\right)} \right)+\mathbb{E}\left( \textrm{Var}\left(\dfrac{1}{\sqrt{M}}\sum_{m=1}^{M}f(Y_{n+1}^{M,m}) \mathrel{\Big|}\mathcal{F}^{n}\right)\right). \label{splitnum}
\end{align}
We will consider separately the case  $n=0$ and the case $n=1$. We recall that the first case has been fully studied in Section \ref{secasymptoticvariance} without the need to introduce any conjecture.
\subsection{Verification of the Conjectures}
\subsubsection{Conjecture \ref{conj1}}
We recall the notation
\begin{align*}
f_1:=g_1\left( \bar{\eta}_1^0\left(g_1\right)f-\bar{\eta}_1^0\left(g_1f\right)\right).
\end{align*}

Since we have fixed $g_n \left(x \right) = f\left( x\right) = e^x$ $\forall n\geq 0$, $\forall x\in \mathbb{R}$, one has $\bar{\eta}_0^0\left(g_0\right)=e-1$, $\bar{\eta}_1^0\left(g_1\right)=\frac{e^2-1}{2}$,  $\bar{\eta}_1^0\left(g_1f\right)= \frac{1}{6}(e^3-1)(e+1)$. Moreover $Pf_1(x)=\bar{\eta}_1^0\left(g_1\right)\frac{e^{2(x+1)}-e^{2x}}{2}-\bar{\eta}_1^0\left(fg_1\right)\left(e^{x+1}-e^x \right) $ and  $Pf_1^2(x)=\left( \bar{\eta}_1^0\left(g_1 \right) \right)^2 \frac{e^{4(x+1)}-e^{4x}}{4} +\left( \bar{\eta}_1^0\left(fg_1\right)\right)^2\frac{e^{2(x+1)}-e^{2x}}{2} - 2 \bar{\eta}_1^0\left(g_1\right)\bar{\eta}_1^0\left(fg_1\right) \frac{e^{3(x+1)}-e^{3x}}{3}$ and
\begin{align*}
\bar{\eta}_{0}^0\left(g_{0}\left( P f_1^2-\left(P f_1 \right)^2 \right) \right) =& \left( \bar{\eta}_1^0\left(g_1\right)\right) ^2\left(\frac{e^5-1}{5} \right) \left(\frac{e^2-1}{2} \right)+ \left( \bar{\eta}_1^0\left(g_1f\right)\right)^2\left(\frac{e^3-1}{3} \right)\left(\frac{e^2-1}{2}-\left( e-1\right)^2  \right)\\
&- \bar{\eta}_1^0\left(g_1\right) \bar{\eta}_1^0\left(g_1f\right)\left(\frac{e^4-1}{4} \right) \left(\frac{2}{3}\left(e^3-1 \right)-\left(e^2-1 \right) \left(e-1 \right)   \right). 
\end{align*}    \\
We first observe that given a sequence $\left(T_i \right)_{i\geq 1}$ of square integrable i.i.d. random variables, by using the delta method it is possible to prove that as $M$ goes to infinity the following convergence in distribution holds:
\begin{align}\label{conf_int_var}
	\sqrt{M}\left(\left( \frac{1}{M}\sum\limits_{ i = 1}^{M}T_i^2 - \left( \frac{1}{M}\sum\limits_{ i=1}^{M}T_i\right)^2 \right) - \textrm{Var}\left(T_1 \right) \right) \overset{d}{\Longrightarrow}\mathcal{N}\left(0, \textrm{Var}\left(\left(T_1 - \mathbb{E}\left(T_1 \right)  \right)^2 \right)  \right) .
\end{align}
The general strategy will be the following: we fix $M=10000$ and

\begin{enumerate}
\item we simulate $n_1=10^7$ independent samples $T_j$ of $\sqrt{M}\dfrac{\bar{\eta}^{0,M}_{1}\left(g_1f \right)}{\bar{\eta}^{0,M}_{1}\left(g_1 \right)}= \sqrt{M}\frac{\sum\limits_{i=1}^{M}\left( g_1f\right)\left(  X_1^{M,i}\right) }{\sum\limits_{i=1}^{M}g_1\left(  X_1^{M,i}\right)}$    and we compute, by using (\ref{conf_int_var}), the estimator

\begin{align} 
\hat{v}_{1}:= \frac{1}{n_1}\sum\limits_{ i = 1}^{n_1}T_i^2 - \left( \frac{1}{n_1}\sum\limits_{ i=1}^{n_1}T_i\right)^2 \label{estimvar}
\end{align}

of  $\textrm{Var}\left( \sqrt{M}\dfrac{\bar{\eta}^{0,M}_{1}\left(g_1f \right)}{\bar{\eta}^{0,M}_{1}\left(g_1 \right)} \right)$ with relative $95\%$ confidence interval $\left[ a_{1}, b_{1}\right] $  given by 

\begin{align}
&a_1 :=  \hat{v}_{1} -\frac{1.96}{\sqrt{n_1}} \sqrt{\frac{1}{n_1}\sum\limits_{i=1}^{n_1}\left(T_i-\frac{1}{n_1}\sum\limits_{ j = 1}^{n_1} T_j\right)^4 - \left( \frac{1}{n_1}\sum\limits_{i=1}^{n_1}\left(T_i-\frac{1}{n_1}\sum\limits_{ j = 1}^{n_1} T_j\right)^2\right) ^2 }\label{civar1}\\
&b_1 := \hat{v}_{1} +\frac{1.96}{\sqrt{n_1}} \sqrt{\frac{1}{n_1}\sum\limits_{i=1}^{n_1}\left(T_i-\frac{1}{n_1}\sum\limits_{ j = 1}^{n_1} T_j\right)^4 - \left( \frac{1}{n_1}\sum\limits_{i=1}^{n_1}\left(T_i-\frac{1}{n_1}\sum\limits_{ j = 1}^{n_1} T_j\right)^2\right) ^2 }\label{civar2}.
\end{align}
\item we simulate $n_1=10^7$ independent samples $H_j$ of $\dfrac{1}{\sqrt{M}}\sum\limits_{m=1}^{M}Pf_1(Y_1^{M,m})$  and we compute the estimator $\hat{v}_{2}$
of  $V_{1}^M\left(Pf_1\right) = \textrm{Var}\left(\dfrac{1}{\sqrt{M}}\sum\limits_{m=1}^{M}Pf_1(Y_1^{M,m})\right)$ with relative $95\%$ confidence interval $\left[ a_{2}, b_{2}\right] $  with $\hat{v}_{2},\,a_{2},$ and $b_{2}$ respectively defined as in  (\ref{estimvar}), (\ref{civar1}) and (\ref{civar2}).\\
 Therefore 
 \begin{align*}
 	\frac{\hat{v}_{2}}{\left( \bar{\eta}_1^0\left(g_1\right)\right)^4}+\frac{\bar{\eta}_{0}^0\left(g_{0}\left( P_1 f_1^2-\left(P f_1 \right)^2 \right) \right) }{\left( \bar{\eta}_1^0\left(g_1\right)\right)^4\bar{\eta}_{0}^0\left(g_{0}\right)} 
 \end{align*}
 is an estimator of $\frac{V_{1}^M\left(Pf_1\right)}{\left( \bar{\eta}_1^0\left(g_1\right)\right) ^4 }+\frac{\bar{\eta}_{0}^0\left(g_{0}\left( P_1 f_1^2-\left(P f_1 \right)^2 \right) \right) }{\left( \bar{\eta}_1^0\left(g_1\right)\right)^4\bar{\eta}_{0}^0\left(g_{0}\right)} $ with relative $95\%$ confidence interval 
 \begin{align*}
 	 \left[ \frac{a_{2}}{\left( \bar{\eta}_1^0\left(g_1\right)\right) ^4}+\frac{\bar{\eta}_{0}^0\left(g_{0}\left( P f_1^2-\left(Pf_1 \right)^2 \right) \right) }{\left( \bar{\eta}_1^0\left(g_1\right)\right)^4\bar{\eta}_{0}^0\left(g_{0}\right)}, \frac{b_{2}}{\left( \bar{\eta}_1^0\left(g_1\right)\right) ^4}+\frac{\bar{\eta}_{0}^0\left(g_{0}\left( P_1 f_1^2-\left(P f_1 \right)^2 \right) \right) }{\left( \bar{\eta}_1^0\left(g_1\right)\right)^4\bar{\eta}_{0}^0\left(g_{0}\right)}\right].
 \end{align*}
 \item we check that $\hat{v}_{1}$ is close to $\frac{\hat{v}_{2}}{\left( \bar{\eta}_1^0\left(g_1\right)\right)^4}+\frac{\bar{\eta}_{0}^0\left(g_{0}\left( P_1 f_1^2-\left(P f_1 \right)^2 \right) \right) }{\left( \bar{\eta}_1^0\left(g_1\right)\right)^4\bar{\eta}_{0}^0\left(g_{0}\right)}$ as expected
\end{enumerate}
\subsubsection*{RESULTS}
\begin{center}	
	\begin{tabular}{|c|c|c|}
		\hline
		n=1	& value & CI (95\%) \\
		\hline
		$\hat{v}_{1}$	&  2.8021446 & [ 2.7775619 , 2.8267273 ] \\
		\hline
		$\frac{\hat{v}_{2}}{\left( \bar{\eta}_1^0\left(g_1\right)\right)^4}+\frac{\bar{\eta}_{0}^0\left(g_{0}\left( P_1 f_1^2-\left(P f_1 \right)^2 \right) \right) }{\left( \bar{\eta}_1^0\left(g_1\right)\right)^4\bar{\eta}_{0}^0\left(g_{0}\right)}$	&  2.7932862
		& [ 2.7831461 , 2.8034263 ]\\
		\hline
	\end{tabular}
\end{center}	

\subsubsection{Conjecture \ref{conj2}}
Let $ n \in \left\lbrace  1,2\right\rbrace$ and $t\in \left\lbrace  1,2\right\rbrace $. Moreover let $h\left(x_0,\cdots,x_t \right)=  x_0+\cdots+x_t $   and   $\psi\left(u_0,w_0,\cdots,w_{t+1} \right)=  u_0+w_0+\cdots+w_{t+1}$ . \\
The general strategy will be the following: we fix $M=10000$ and 
\begin{enumerate}
	\item we simulate $n_1=10^5$ independent samples $T_j$ of $\frac{1}{M}\sum\limits_{m=1}^{M-t} h\left(X_n^{M,m},\cdots, X_n^{M,m+t} \right)\psi\left(u_{n}^{M,m-1},w_n^{M,m},\cdots,w_n^{M,m+t} \right)$
and we compute the standard estimator 
	\begin{align}\label{meanest}
	\hat{v}_{1}:= \frac{1}{n_1} \sum\limits_{j=1}^{n_1}T_j
	\end{align}
		of $\frac{1}{M}\sum\limits_{m=1}^{M-t} \mathbb{E}\left( h\left(X_n^{M,m},\cdots, X_n^{M,m+t} \right)\psi\left(u_{n}^{M,m-1},w_n^{M,m},\cdots,w_n^{M,m+t} \right)\right) $ with relative $95\%$ confidence interval $\left[ a_{1}, b_{1}\right] $  given by 
		
	\begin{align}\label{cimean1}
	&a_{1}:=\hat{v}_{1} - \frac{1.96}{\sqrt{n_1}}*\sqrt{\frac{1}{n_1} \sum\limits_{j=1}^{n_1}\left(T_j-\hat{v}_{1}  \right)^2 }\\\label{cimean2}
	& b_{1}:= \hat{v}_{1} + \frac{1.96}{\sqrt{n_1}}*\sqrt{\frac{1}{n_1} \sum\limits_{j=1}^{n_1}\left(T_j-\hat{v}_{1}  \right)^2 }
	\end{align}
	\item we simulate  $n_1=10^5$ independent samples $H_j$ of
	\begin{align*}
		\frac{1}{M}\sum_{m=1}^{M-t} h\left(X_n^{M,m},\cdots, X_n^{M,m+t} \right)\psi\left(U,\tilde{g}_n(X_n^{M,m}),\cdots,\tilde{g}_n(X_n^{M,m+t})\right) 
	\end{align*} 
	where $U$ is a random variable uniformly distributed on $\left(0,1\right)$ independent of $\mathcal{F}^n$ and $\tilde{g}_n= \frac{g_n}{\eta_{n}^0\left( g_n\right) }$ $\left(\eta_{1}^0\left( g_1\right)= \frac{e^2-1}{2},\eta_{2}^0\left( g_2\right)= \frac{e^3-1}{3}\right)$. We then compute the standard estimator $\hat{v}_{2}$ of 
		\begin{align*}
	\frac{1}{M}\sum_{m=1}^{M-t} \mathbb{E}\left( h\left(X_n^{M,m},\cdots, X_n^{M,m+t} \right)\psi\left(U,\tilde{g}_n(X_n^{M,m}),\cdots,\tilde{g}_n(X_n^{M,m+t})\right) \right) 
	\end{align*} 
	
	with relative $95\%$ confidence interval $\left[ a_{2}, b_{2}\right] $  with $\hat{v}_{2}, a_{2}, b_{2}$ respectively defined as in (\ref{meanest}), (\ref{cimean1}) and (\ref{cimean2}).
	\item we check that $\hat{v}_{1}$ is close to $\hat{v}_{2}$ as expected
\end{enumerate}
\subsubsection*{RESULTS}
\subsubsection*{n=1}
 \begin{minipage}[c]{0.5\textwidth}

		\begin{tabular}{|c|c|c|}
			\hline
			t=1	& value & CI (95\%) \\
			\hline
			$\hat{v}_{1}$	&  5.7510732 
			& [ 5.7509322 , 5.7512143 ] \\
			\hline
			$\hat{v}_{2}$	& 5.7509738  
			& [ 5.7470892 , 5.7548583 ] \\
			\hline
		\end{tabular}
		
	\end{minipage}
	\begin{minipage}[c]{0.5\textwidth}
		
		\begin{tabular}{|c|c|c|}
			\hline
			t=2	& value & CI (95\%) \\
			\hline
			$\hat{v}_{1}$	&  11.8853516
			& [ 11.8850608 , 11.8856425 ]  
			\\
			\hline
			$\hat{v}_{2}$	&  11.8835799 
			&  [ 11.8777417 , 11.8894182 ]\\
			\hline
		\end{tabular}
	\end{minipage}\\

\subsubsection*{n=2}

 \begin{minipage}[c]{0.5\textwidth}

	\begin{tabular}{|c|c|c|}
		\hline
		t=1	& value & CI (95\%) \\
		\hline
		$\hat{v}_{1}$	&  9.2154319
		&  [ 9.2152408 , 9.215623 ]   \\
		\hline
		$\hat{v}_{2}$	&  9.2150201
		& [ 9.2087834 , 9.2212568 ]    \\
		\hline
	\end{tabular}
	
\end{minipage}
\begin{minipage}[c]{0.5\textwidth}
	
	\begin{tabular}{|c|c|c|}
		\hline
		t=2	& value & CI (95\%) \\
		\hline
		$\hat{v}_{1}$	&  19.0901421
		&  [ 19.0897513 , 19.0905329 ]\\
		\hline
		$\hat{v}_{2}$	& 19.0895089  
		& [ 19.0801501 , 19.0988676 ] \\
		\hline
	\end{tabular}
\end{minipage}\\

\subsection{Focus on the Variance}

\subsubsection{case $n=0$}
We recall the notation $\bar\eta_0^0(h):=\eta(h)$ $\forall h\in\mathcal{B}_b\left(\mathbb{R} \right)$ and 
$f_0:=g_0\left( \bar{\eta}_0^0\left(g_0\right)f-\bar{\eta}_0^0\left(g_0f\right)\right).$
 \\
By (\ref{purpose}) and by observing that $\textrm{Var}\left( \sqrt{M}\dfrac{\bar{\eta}^{0,M}_{0}\left(g_0f \right)}{\bar{\eta}^{0,M}_{0}\left(g_0\right)} \right)=\textrm{Var}\left( \sqrt{M}\dfrac{\bar{\eta}^{0,M}_{0}\left(\frac{g_0}{\eta\left(g_0 \right)}f \right)}{\bar{\eta}^{0,M}_{0}\left(\frac{g_0}{\eta\left(g_0 \right)}\right)} \right)$, one has
\begin{align*}
	\left|\textrm{Var}\left( \sqrt{M}\dfrac{\bar{\eta}^{0,M}_{0}\left(g_0f \right)}{\bar{\eta}^{0,M}_{0}\left(g_0\right)} \right) -\frac{\eta\left(f_0^2 \right)}{\eta\left(g_0 \right)^4}  \right|\underset{M\rightarrow \infty}{\longrightarrow} 0  
\end{align*}
where with the choices made $\bar{\eta}_0^0\left(g_0\right)=  e-1$, $\bar{\eta}_0^0\left(g_0f\right)= \frac{e^2-1}{2}$,   $\eta\left(f_0^2 \right)= \left( \bar{\eta}_0^0\left(g_0\right)\right)^2 \frac{e^4-1}{4}+ \left(\bar{\eta}_0^0\left(g_0f\right) \right)^2\frac{e^2-1}{2}-2\bar{\eta}_0^0\left(g_0\right)\bar{\eta}_0^0\left(g_0f\right)\frac{e^3-1}{3} $.

The application of Theorem \ref{mainresult} with $g = \tilde{g}_0 = \frac{g_0}{\bar{\eta}_0^0\left(g_0\right)}$ gives
\begin{align*}
\left|\mathbb{E}\left( \textrm{Var}\left(\dfrac{1}{\sqrt{M}}\sum_{m=1}^{M}f(Y_1^{M,m}) \mathrel{\Big|}\mathcal{F}^{0}\right)\right) - \sum\limits_{k= 0}^{\phi_{0}(0)}\mathbb{E}\left(F_{k}\right) \right|  \underset{M\rightarrow \infty}{\longrightarrow} 0 
\end{align*}

	with $\left( F_{k}\right)_{k\in \mathbb{N}} $  given by 
\begin{equation}
F_{k} = \begin{cases}
f^2(X_0^{M,1})\beta_{0}\left( U_1,\tilde{g}_0(X_0^{M,1})\right) &\,k=0\\
-f(X_0^{M,1})f(X_0^{M,k+1})\beta_{1}\left( U_1,\tilde{g}_0(X_0^{M,1}),\sum\limits_{\ell=2}^{k}\tilde{g}_0(X_0^{M,\ell}),\tilde{g}_0(X_0^{M,k+1})\right) &\, k>0
\end{cases}
\end{equation}  
where $U_1\sim \mathcal{U}(0,1)$ is independent of $X_0^{M,1},\cdots,X_0^{M,k+1}$ and $\beta_{0}$ and $\beta_{1}$ are respectively defined in (\ref{beta0}) and (\ref{beta1}). By using that when $U_1$ is uniformly distributed  on $\left(0,1 \right) $, $\left\lbrace U_1 + r\right\rbrace $ is uniformly distributed on $\left(0,1 \right) $ for each $r\geq 0$, we can apply the Freezing Lemma to rewrite $\mathbb{E}\left(F_{k}\right)$ $\forall k=0,\cdots,\phi_{0}(0)$ :

\begin{align}\label{definizione_sigma2}
\sum\limits_{k= 0}^{\phi_{0}(0)}\mathbb{E}\left(F_{k}\right) = \sum\limits_{k= 0}^{\phi_{0}(0)}\mathbb{E}\left(\phi_{k,\tilde{g}_0}\left(X_0^{M,1},\cdots,X_0^{M,k+1} \right) \right)
\end{align}

for measurable functions $\phi_{k,\tilde{g}_0}:\mathbb{R}^{\left( k+1\right) }\rightarrow \mathbb{R}$ given by 
\begin{align}
\phi_{0,\tilde{g}_0}(x)= \frac{1}{3}\left( f^2( x)\left( 1-1_{\left\lbrace \tilde{g}_0(x)<1\right\rbrace }\left(  1-\tilde{g}_0(x)\right) ^3 \right)\right)\label{phi0}
\end{align}
and for each $k\geq1$

\begin{align}
&\phi_{k,\tilde{g}_0}(x)= \medmath{\frac{1}{2}\left(f( x_{1})f( x_{k+1})1_{\left\lbrace \sum\limits_{i=2}^{k}\tilde{g}_0(x_i)<1\right\rbrace }1_{\left\lbrace \sum\limits_{i=1}^{k+1}\tilde{g}_0(x_i)<1\right\rbrace }\tilde{g}_0(x_1)\tilde{g}_0(x_{k+1})\left( 2-2\sum\limits_{i=2}^{k}\tilde{g}_0(x_i)-(\tilde{g}_0(x_1)+\tilde{g}_0(x_{k+1}))\right)\right)}\nonumber \\
&\phantom{=}\medmath{+\frac{1}{6}\left( f( x_{1})f( x_{k+1})\cdot 1_{\left\lbrace \sum\limits_{i=2}^{k}\tilde{g}_0(x_i)<1\right\rbrace } 1_{\left\lbrace \sum\limits_{i=1}^{k+1}\tilde{g}_0(x_i)\geq  1\right\rbrace }\left(1-\sum\limits_{i=2}^{k}\tilde{g}_0(x_i) \right)^3\right)} \nonumber\\
&\phantom{=}\medmath{-\frac{1}{6}\left( f( x_{1})f(x_{k_1+1})\cdot 1_{\left\lbrace \sum\limits_{i=2}^{k}\tilde{g}_0(x_i)<1\right\rbrace } 1_{\left\lbrace \sum\limits_{i=1}^{k+1}\tilde{g}_0(x_i)\geq  1\right\rbrace }\left( \left(1-\sum\limits_{i=1}^{k}\tilde{g}_0(x_i) \right)^3 1_{\left\lbrace \sum\limits_{i=1}^{k}\tilde{g}_0(x_i)<1\right\rbrace }+\left(1-\sum\limits_{i=2}^{k+1}\tilde{g}_0(x_i) \right)^3 1_{\left\lbrace \sum\limits_{i=2}^{k+1}\tilde{g}_0(x_i)<1\right\rbrace }\right)\right)}.\label{phik}
\end{align}

Thus using numerical methods we are going to check what we already know theoretically that is

\begin{align*}
	\left|V_{1}^M\left(f\right) - \frac{\eta\left(f_0^2 \right)}{\eta\left(g_0 \right)^4} -\sum\limits_{k= 0}^{\phi_{0}(0)}\mathbb{E}\left(\phi_{k,\tilde{g}_0}\left(X_0^{M,1},\cdots,X_0^{M,k+1} \right) \right) \right| \underset{M\rightarrow \infty}{\longrightarrow} 0 .
\end{align*}

The general strategy will be the following: we fix $M=10000$ and 
\begin{enumerate}
		\item we simulate $n_1=10^7$ independent samples $T_j$ of $\dfrac{1}{\sqrt{M}}\sum\limits_{m=1}^{M}f(Y_1^{M,m})$  and we compute the estimator $\hat{v}_{1}$ 
		 of  $V_{1}^M\left(f\right) = \textrm{Var}\left(\dfrac{1}{\sqrt{M}}\sum\limits_{m=1}^{M}f(Y_1^{M,m})\right)$ with relative $95\%$ confidence interval $\left[ a_{1}, b_{1}\right] $ with $\hat{v}_{1}, a_{1}, b_{1}$ respectively defined as in (\ref{estimvar}), (\ref{civar1}) and (\ref{civar2}).

	Therefore $\hat{v}_{1}-\frac{\eta\left(f_0^2 \right)}{\eta\left(g_0 \right)^4} $ is an estimator of $V_{1}^M\left(f\right) - \frac{\eta\left(f_0^2 \right)}{\eta\left(g_0 \right)^4}$ with relative $95\%$ confidence interval $\left[ a_{1}-\frac{\eta\left(f_0^2 \right)}{\eta\left(g_0 \right)^4}, b_{1}-\frac{\eta\left(f_0^2 \right)}{\eta\left(g_0 \right)^4}\right]$.
	\item We simulate $n_2= 10^5$ independent samples $Z_j$  of   $\sum\limits_{k= 0}^{\phi_{0}(0)}\phi_{k,\tilde{g}_0}\left(X_0^{M,1},\cdots,X_0^{M,k+1} \right)$ and we compute the standard estimator 	$\hat{v}_{2}$ 
	of $\sum\limits_{k= 0}^{\phi_{0}(0)}\mathbb{E}\left(\phi_{k,\tilde{g}_0}\left(X_0^{M,1},\cdots,X_0^{M,k+1} \right) \right) $  with relative $95\%$ confidence interval $\left[ a_{2}, b_{2}\right] $  with $\hat{v}_{2}, a_{2}, b_{2}$ respectively defined as in (\ref{meanest}), (\ref{cimean1}) and (\ref{cimean2}).
	\item We check that $\hat{v}_{1}-\frac{\eta\left(f_0^2 \right)}{\eta\left(g_0 \right)^4} $ is close to $\hat{v}_{2}$ as expected.
\end{enumerate}	
\subsubsection*{RESULTS}
\begin{center}
	\begin{tabular}{|c|c|c|}
		\hline
		n=0	& value & CI (95\%) \\
		\hline
		$\hat{v}_{1}-\frac{\eta\left(f_0^2 \right)}{\eta\left(g_0 \right)^4}$	& 0.07943  & [ 0.079127 , 0.079733 ] \\
		\hline
		$\hat{v}_{2}$	& 0.0793412
		& [ 0.0790773 , 0.0796051 ] \\
		\hline
	\end{tabular}
\end{center}
\subsubsection{case $n=1$}

In this case the asymptotic behaviour of the first term of the right-hand side  of (\ref{splitnum}) is given by Conjecture \ref{conj1}

\begin{align*}
	\left|\textrm{Var}\left( \sqrt{M}\dfrac{\bar{\eta}^{0,M}_{1}\left(g_1f \right)}{\bar{\eta}^{0,M}_{1}\left(g_1 \right)} \right) - \frac{V_{1}^M\left(Pf_1\right)}{\left( \bar{\eta}_1^0\left(g_1\right)\right) ^4 }-\frac{\bar{\eta}_{0}^0\left(g_{0}\left( P f_1^2-\left(P f_1 \right)^2 \right) \right) }{\left( \bar{\eta}_1^0\left(g_1\right)\right)^4\bar{\eta}_{0}^0\left(g_0\right)}\right|   \underset{M\rightarrow \infty}{\longrightarrow} 0 
\end{align*}

and by (\ref{conv1}) the asymptotic behaviour of the second term of the right-hand side  of (\ref{splitnum}) is given by

\begin{align}
&\left| \mathbb{E}\left( \textrm{Var}\left(\dfrac{1}{\sqrt{M}}\sum_{m=1}^{M}f(Y_{2}^{M,m}) \mathrel{\Big|}\mathcal{F}^{1}\right)\right) -\sum_{k=0}^{\phi_{1}(0)}
\mathbb{E}\left(\bar{\eta}_1^{k,M} \left( \tilde{f}_k\right) \right)  \right| \underset{M\rightarrow \infty}{\longrightarrow} 0 
\end{align}

where for $k=0,\cdots,\phi_{1}(0)$,
\begin{align*}
	\tilde{f}_k\left(x_0,\cdots,x_k \right)= f\left(x_0 \right)f\left(x_k \right) \int_{0}^{1}\bar\beta_k\left(u,\tilde{g}_{1}\left( x_0 \right) ,\cdots,\tilde{g}_{1}\left( x_k \right) \right) du
\end{align*}
 
 with $\bar\beta_k\left(u,y_0,\cdots,y_k \right): = \beta_{0}(u,y_0)1_{\left\lbrace k=0\right\rbrace }- \beta_{1}\left( u,y_0,\sum\limits_{\ell=1}^{k-1}y_{\ell},y_k\right) 1_{\left\lbrace k\neq 0\right\rbrace }$.
We recall that $\beta_{0}$ and $\beta_{1}$ are respectively defined in (\ref{beta0}) and (\ref{beta1}) and  $\tilde{g}_1= \frac{g_1}{\bar{\eta}_1^0\left(g_1\right) } $ .\\
We now observe that 
\begin{align}
&\sum_{k=0}^{\phi_{1}(0)}
\mathbb{E}\left(\bar{\eta}_1^{k,M} \left( \tilde{f}_k\right) \right)  = \sum_{k=0}^{\phi_{1}(0)} \mathbb{E}\left(\bar{\eta}_1^{k,M}\left( \phi_{k,\tilde{g}_1}\right) \right) \label{rew_rhs}
\end{align}
where the $\phi_{k,\tilde{g}_1}:\mathbb{R}^{(k+1)}\rightarrow\mathbb{R}$ are measurable functions defined in (\ref{phi0}) and (\ref{phik}).\\ 

Thus using numerical methods we are going to check that 
\begin{align*}
	\left|V_{2}^M\left(f\right)- \frac{V_{1}^M\left(Pf_1\right)}{\left( \bar{\eta}_1^0\left(g_1\right)\right) ^4 }-\frac{\bar{\eta}_{0}^0\left(g_{0}\left( P f_1^2-\left(P f_1 \right)^2 \right) \right) }{\left( \bar{\eta}_1^0\left(g_1\right)\right)^4\bar{\eta}_{0}^0\left(g_0\right)}-\sum_{k=0}^{\phi_{1}(0)} \mathbb{E}\left(\bar{\eta}_1^{k,M}\left( \phi_{k,\tilde{g}_1}\right) \right) \right|  \underset{M\rightarrow \infty}{\longrightarrow} 0
\end{align*}

so that Conjecture $1$ and Conjecture $2$ are numerically verified at the same time.\\

The general strategy will be the following:
\begin{enumerate}

	\item 	\begin{enumerate}
	\item we simulate $n_1=10^7$ independent samples $T_j$ of $\dfrac{1}{\sqrt{M}}\sum\limits_{m=1}^{M}f(Y_2^{M,m})$  and we compute the estimator $\hat{v}_{1,1}$
	of  $\textrm{Var}\left(\dfrac{1}{\sqrt{M}}\sum\limits_{m=1}^{M}f(Y_2^{M,m})\right)$ with relative $95\%$ confidence interval $\left[ a_{1,1}, b_{1,1}\right] $  with $\hat{v}_{1,1},a_{1,1},$ and $b_{1,1}$ respectively defined as in  (\ref{estimvar}), (\ref{civar1}) and (\ref{civar2}).
	\item we simulate $n_1=10^7$ independent samples $H_j$ of $\dfrac{1}{\sqrt{M}}\sum\limits_{m=1}^{M}Pf_1(Y_1^{M,m})$ independent of $\left( T_j\right)_{1\leq j \leq n_1}$. We compute the estimator $\hat{v}_{1,2}$
	of  $\textrm{Var}\left(\dfrac{1}{\sqrt{M}}\sum\limits_{m=1}^{M}Pf_1(Y_1^{M,m})\right)$ with relative $95\%$ confidence interval $\left[ a_{1,2}, b_{1,2}\right] $  with $\hat{v}_{1,2},\,a_{1,2},$ and $b_{1,2}$ respectively defined as in  (\ref{estimvar}), (\ref{civar1}) and (\ref{civar2}).
	\item we compute the estimator $\hat{v}_{1,1} - \frac{\hat{v}_{1,2}}{\left( \bar{\eta}_1^0\left(g_1\right)\right) ^4}-\frac{\bar{\eta}_{0}^0\left(g_{0}\left( P f_1^2-\left(P f_1 \right)^2 \right) \right) }{\left( \bar{\eta}_1^0\left(g_1\right)\right)^4\bar{\eta}_{0}^0\left(g_0\right)}$ of  
	\begin{align*}
V_{2}^M\left(f\right)- \frac{V_{1}^M\left(Pf_1\right)}{\left( \bar{\eta}_1^0\left(g_1\right)\right) ^4 }-\frac{\bar{\eta}_{0}^0\left(g_{0}\left( P f_1^2-\left(P f_1 \right)^2 \right) \right) }{\left( \bar{\eta}_1^0\left(g_1\right)\right)^4\bar{\eta}_{0}^0\left(g_0\right)}
	\end{align*}
	with $90\%$ confidence interval 
	\begin{align*}
		\left[ a_{1,1}-\frac{b_{1,2}}{\left( \bar{\eta}_1^0\left(g_1\right)\right) ^4 }-\frac{\bar{\eta}_{0}^0\left(g_{0}\left( P f_1^2-\left(P f_1 \right)^2 \right) \right) }{\left( \bar{\eta}_1^0\left(g_1\right)\right)^4\bar{\eta}_{0}^0\left(g_0\right)},b_{1,1}- \frac{a_{1,2}}{\left( \bar{\eta}_1^0\left(g_1\right)\right) ^4 }-\frac{\bar{\eta}_{0}^0\left(g_{0}\left( P f_1^2-\left(P f_1 \right)^2 \right) \right) }{\left( \bar{\eta}_1^0\left(g_1\right)\right)^4\bar{\eta}_{0}^0\left(g_0\right)}\right].
	\end{align*}
	\end{enumerate}
	 \item We simulate $n_2=10^5$ independent samples $Z_j$  of  $\sum\limits_{k=0}^{\phi_{1}(0)} \bar{\eta}_1^{k,M}\left( \phi_{k,\tilde{g}_1}\right) $ and we compute the standard estimator $\hat{v}_{2}$ of  $\sum\limits_{k=0}^{\phi_{1}(0)} \mathbb{E}\left(\bar{\eta}_1^{k,M}\left( \phi_{k,\tilde{g}_1}\right) \right)$  with relative $95\%$ confidence interval $\left[ a_{2}, b_{2}\right] $  with $\hat{v}_{2}, a_{2}, b_{2}$ respectively defined as in (\ref{meanest}), (\ref{cimean1}) and (\ref{cimean2}).

	\item We check that $\hat{v}_{1,1} - \frac{\hat{v}_{1,2}}{\left( \bar{\eta}_1^0\left(g_1\right)\right) ^4}-\frac{\bar{\eta}_{0}^0\left(g_{0}\left( P f_1^2-\left(P f_1 \right)^2 \right) \right) }{\left( \bar{\eta}_1^0\left(g_1\right)\right)^4\bar{\eta}_{0}^0\left(g_0\right)}$ is close to $\hat{v}_{2}$ as expected.   
	\end{enumerate}

\subsubsection*{RESULTS} 

\begin{center}	
	\begin{tabular}{|c|c|c|}
		\hline
		n=1	& value & CI  \\
		\hline
		$\hat{v}_{1,1} - \frac{\hat{v}_{1,2}}{\left( \bar{\eta}_1^0\left(g_1\right)\right) ^4}-\frac{\bar{\eta}_{0}^0\left(g_{0}\left( P f_1^2-\left(P f_1 \right)^2 \right) \right) }{\left( \bar{\eta}_1^0\left(g_1\right)\right)^4\bar{\eta}_{0}^0\left(g_0\right)}$	& 0.4729737   & [ 0.4690806 , 0.4768669 ] \\
		\hline
		$\hat{v}_{2}$	&  0.4725217
		& [ 0.4724719 , 0.4725714 ] \\
		\hline
	\end{tabular}
\end{center}

\end{document}